\documentclass[11pt]{article}

\usepackage{amssymb,amsmath,amsfonts}
\usepackage{graphicx,color,enumitem}
\usepackage{mathrsfs}  
\usepackage{amsthm} 
\usepackage{bm}
\RequirePackage[numbers]{natbib}

\usepackage{stmaryrd}

\usepackage{geometry}
 \usepackage{mathtools}
 \mathtoolsset{showonlyrefs}

\usepackage[colorinlistoftodos, textwidth=4cm, shadow]{todonotes}
\RequirePackage[colorlinks,citecolor=blue,urlcolor=blue]{hyperref}

\usepackage{bbm}
\usepackage{soul}


\newcommand{\ind}{\mathbbm{1}}

\newcommand{\E}{{\mathbb E}}
\newcommand{\F}{{\mathbb F}}

\renewcommand{\P}{{\mathbb P}}

\newcommand{\Uc}{{\mathcal U}}
\newcommand{\Vc}{{\mathcal V}}

\newcommand{\R}{{\mathbb R}}
\renewcommand{\S}{{\mathbb S}}
\newcommand{\N}{{\mathbb N}}

\newcommand{\Bcal}{{\mathcal B}}

\newcommand{\Ncal}{{\mathcal N}}

\newcommand{\Tcal}{{\mathcal T}}

\newcommand{\Fc}{{\mathcal F}}

\DeclareMathOperator{\tr}{tr}

\DeclareMathOperator{\esssup}{ess\,sup}

\newtheorem{theorem}{Theorem}

\newtheorem{definition}[theorem]{Definition}


\newtheorem{lemma}[theorem]{Lemma}

\theoremstyle{remark}
\newtheorem{remark}[theorem]{Remark}

\numberwithin{equation}{section}
\numberwithin{theorem}{section}

\newcommand{\red}{\color{red}}

\definecolor{darkgreen}{rgb}{0,0.7,0}
\newcommand{\green}{\color{darkgreen}}

\newcommand{\iii}{{\vert\kern-0.25ex\vert\kern-0.25ex\vert}}

\newcommand{\T}{\top}

\newcommand{\ints}{\int_{\R_+}}

\renewcommand{\c}{\alpha}

\def \ep{\hbox{ }\hfill$\Box$}


\begin{document}

\newcommand{\dd}{\mathrm{d}}
\newcommand{\proofspace}{\mbox{} \\*}
\newcommand{\entrecro}[1]{\left[ #1 \right]}
\newcommand{\entrebra}[1]{\left \{  #1 \right \}}
\newcommand{\entrepar}[1]{\left(  #1 \right) }
\renewcommand{\bar}{\overline}
\newcommand{\1}{\mathbf{1}} 
\newcommand{\oo}[1]{ {\fontsize{6}{6}\selectfont \textcircled{\raisebox{-0.79pt}{\hspace*{0.001pt} #1} }}  } 
\newcommand{\be}[1]{\begin{equation} #1 \end{equation}}
\newcommand{\bec}[1]{\begin{equation} \begin{cases} #1\end{cases} \end{equation}}
\newcommand{\bes}[1]{\begin{equation} \begin{split} #1\end{split} \end{equation}}
\newcommand{\B}[1]{\boldsymbol{#1}}
\newcommand{\cali}[1]{ \mathcal{#1}}
\newcommand{\produit}{\bigstar}

\newcommand{\Chi}{\scalebox{1.1}{$\chi$}}
\newcommand{\Pg}{P^g_t}
\newcommand{\Px}{P^x_t}
\newcommand{\PI}[1]{P^i_t(#1)}
\newcommand{\Pc}{P^c_t}

\newcommand{\inter}{\llbracket 1,n \rrbracket}
\newcommand{\actionspace}{\mathbf{A}}
\newcommand{\lambdaspace}[1][]{%
\ifthenelse{\equal{#1}{}}{C_\lambda([0,T], L^1(\mu_1 \otimes \mu_2))}{C_{#1}([0,T], L^1(\mu_1 \otimes \mu_2))}%
}
\newcommand{\var}{\mathrm{Var}}
\renewcommand{\P}{\mathbb{P}}

\newcommand{\scal}[2]{#1.#2^{\otimes2}}

\renewcommand{\c}{\alpha}

\title{Integral operator Riccati  equations arising in stochastic Volterra control problems}

\author{Eduardo ABI JABER \footnote{Universit\'e Paris 1 Panth\'eon-Sorbonne, Centre d'Economie de la Sorbonne, 106, Boulevard de l'H\^opital, 75013 Paris, \sf  eduardo.abi-jaber at univ-paris1.fr.}
\quad\quad Enzo MILLER \footnote{LPSM, Universit\'e de Paris, Building Sophie Germain, Avenue de France, 75013 Paris,  \sf  enzo.miller at polytechnique.org}
\quad\quad  Huy\^en PHAM \footnote{LPSM, Universit\'e de Paris,  Building Sophie Germain, Avenue de France, 75013 Paris, \sf pham at lpsm.paris 
The work of this author  is supported by FiME (Finance for Energy Market Research Centre) and the ``Finance et D\'eveloppement Durable - Approches Quantitatives'' EDF - CACIB Chair.
 }
}
\maketitle

\begin{abstract}
We establish existence and uniqueness for infinite dimensional Riccati equations taking values in the Banach space $L^1(\mu \otimes \mu)$ for certain signed matrix measures $\mu$ which are not necessarily finite. Such equations can be seen as the infinite dimensional analogue of matrix Riccati equations and they appear in the Linear--Quadratic control theory of stochastic Volterra equations.
\end{abstract}

\vspace{5mm}

\noindent {\bf MSC Classification:} 47G10, 49N10, 34G20. 

\vspace{5mm}

\noindent {\bf Key words:}  Infinite dimensional Lyapunov equation, integral operator Riccati equation, linear-quadratic control, stochastic Volterra equations.

\newpage


\section{Introduction}
Fix $d,d',m \in \mathbb N$ and $\mu$ a $d\times d'$-matrix signed measure $\mu$. This paper deals with the infinite dimensional Backward Riccati equation
\begin{equation} \label{formalkernelRiccati}
\left\{
\begin{array}{ccl}
{\Gamma}_{T}(\theta,\tau) &=& {0}  \\
\dot    {{\Gamma}}_{t}(\theta,\tau) &=&  (\theta+\tau)\Gamma_{t}(\theta,\tau)  - Q   - D^\top \int_{\R_+^2} \mu(d\theta')^\top \Gamma_{t}(\theta',\tau')\mu(d\tau') D  \\
& & \; - \;  {B}^\top \int_{\R_+} \mu(d\theta')^\top \Gamma_{t}(\theta',\tau)  -   \int_{\R_+}  \Gamma_{t}(\theta,\tau') \mu(d\tau')B  +  \;  S_{t}(\theta)^\T \hat{N}_{t}^{-1} S_{t}(\tau),  
\end{array}
\right.
\end{equation}
where
\begin{align*}
S_{t}(\tau)  &= C^\T \ints \mu(d\theta)^\top \Gamma_{t}(\theta,\tau) + F^\top \int_{\R_+^2}  \mu(d\theta')^\top \Gamma_{t}(\theta',\tau')  \mu(d\tau') D, \\
\hat{N}_{t}  &= N + F^\T \int_{\R_+^2} \mu(d\theta)^\T \Gamma_{t}(\theta,\tau)   \mu(d\tau)  F,
\end{align*}
and  $B,D$ $\in$ $\R^{d'\times d}$, $C,F$ $\in$ $\R^{d'\times m}$, $Q$ $\in$ $\R^{d\times d}$ and $N$ $\in$ $\R^{m\times m}$, and {$^\top$ is the transpose operation.}    
Here $\mu$ is not necessarily finite and satisfies
\begin{align}\label{eq:mucond}
\int_{\R_+} \big( 1 \wedge \theta^{-1/2}\big) |\mu|(d\theta) <\infty,
\end{align}
where $|\mu|$ denotes the total variation of $\mu$. We look for solutions $\Gamma: [0,T]\times \R_+^2 \to \R^{d\times d}  $ with values in $L^1(\mu\otimes \mu)$ {(see the precise definition in Section \ref{secmain})  to ensure that  equation \eqref{formalkernelRiccati} is well-posed.}

In particular, if $d=d'=1$ and $\mu(d\theta)=\sum_{i=1}^n \delta_{\theta^n_i}(d\theta)$, \eqref{formalkernelRiccati} reduces to a  $n\times n$-matrix Riccati equation  for 
$\Gamma^n=(\Gamma(\theta_i^n,\theta_j^n))_{1\leq i,j\leq n}$, only written componentwise. Such matrix Riccati equation appears in finite dimensional Linear-Quadratic (LQ) control theory,  see e.g.   \cite[chapter 7]{yong1999stochastic}. (One could also recover $d\times d$-matrix Riccati equation by setting $d=d'$ and $\mu=I_d \delta_0$.)

For more general measures $\mu$, e.g.~with infinite support,   \eqref{formalkernelRiccati} can be seen as the infinite-dimensional extension of matrix Riccati equations and one could expect a connection with LQ control in infinite dimension. This is indeed the case, and our motivation for studying the Riccati equation \eqref{formalkernelRiccati} comes from 
infinite dimensional lifts of LQ control theory of non-Markovian stochastic Volterra equations. Setting 
$$  K(t)=\int_{\R_+} e^{-\theta t} \mu(d\theta), \quad t >0,$$
one can consider the controlled $d$-dimensional  stochastic linear Volterra equation 
\begin{align} 
X_t &=  \int_0^t K(t-s) \big( BX_s + C \alpha_s\big) ds + \int_0^t K(t-s) \big( D X_s + F \alpha_s \big) dW_s, 
\end{align}
where $W$ is a one dimensional Brownian motion and $\alpha$ is a suitable control taking its values in $\R^m$. Observe that the integrability condition on the measure $\mu$ allows 
singularity of the kernel $K$ at $0$, and includes the case of a fractional kernel $K_H(t)=t^{H-1/2}$ with Hurst parameter $H \in (0,1/2)$  with a corresponding measure $\mu(d\theta)=c_H\theta^{-H-1/2}d\theta$, for some normalizing constant $c_H$. 
The linear-quadratic  control problem consisting in the minimization over $\alpha$ of the cost functional 
\begin{align}  
J(\alpha) &= \E \Big[ \int_0^T\big(  X_t^\top Q X_t + \alpha_t^\top N \alpha_t \big) dt \Big], 
\end{align}
can be  explicitly  solved using the Riccati equation \eqref{formalkernelRiccati}, see \cite{abietal19a}. 

When  $D=F=0$, 
the Riccati equation  \eqref{formalkernelRiccati} also enters in the computation of the Laplace transform of  $\tr\left(\int_0^T Z_s^\top Q Z_s ds \right)$, where  $Z$ is the $d\times n$-matrix valued Gaussian process
$$  Z_t = Z_0 + \int_0^t K(t-s) BZ_s ds + \int_0^t K(t-s)Cd\widetilde W_s, \quad t\geq 0,$$
and $\widetilde W$ is a $m\times n$ matrix Brownian motion, see \cite{AJwishart}. 

The Riccati equation \eqref{formalkernelRiccati} can be also connected to an operator Riccati equation as follows.  
Denote by $L^1(\mu)$ the Banach space of $\mu$-a.e. equivalence classes of $|\mu|$-integrable functions 
$\varphi$ $:$ $\R_+$ $\rightarrow$ $\R^{d'}$ endowed with the norm $\|\varphi\|_{L^1(\mu)}$ $=$ $\int_{\R_+} |\mu|(d\theta) |\varphi(\theta)|$,  by  
$L^\infty(\mu)$ the space of  measurable functions from $\R_+\to\R^{d'}$, which are bounded $\mu$-a.e., and   introduce the dual pairing:
\begin{eqnarray} \label{dual}
\langle \varphi,  \psi \rangle_{\mu} &:=& \int_{\R_+} \varphi(\theta)^\top \mu(d\theta)^\top \psi(\theta), 
\quad  (\varphi,\psi)\in L^1(\mu)\times L^{\infty}(\mu^\top).   
\end{eqnarray}
Given any  bounded kernel  solution $\Gamma$ to \eqref{formalkernelRiccati}, let us consider the corresponding  linear integral  operator 
{$\boldsymbol{\Gamma}$ $:$ $[0,T]\times L^1(\mu)$ $\rightarrow$ $L^\infty(\mu^\top)$ defined by} 
\begin{align*}
(\boldsymbol{\Gamma}_t \varphi)(\theta) &= \int_{\R_+} \Gamma_t(\theta,\tau) \mu(d\tau) \varphi(\tau), \quad t \in [0,T], \;\; \varphi \in L^1(\mu). 
\end{align*} 
It is then straightforward to see  that $\boldsymbol{\Gamma}$ solves  the operator Riccati equation on $L^1(\mu)$:  
\begin{equation} \label{Riccatioperator}
\left\{
\begin{array}{ccl}
\boldsymbol{\Gamma}_{T} &=& \boldsymbol{0} \\
\dot    {\boldsymbol{\Gamma}}_{t}&=& - \boldsymbol{\Gamma}_{t}A^{mr} - \left(\boldsymbol{\Gamma}_{t}A^{mr}\right)^{*}-  \boldsymbol{Q} - \boldsymbol{D}^* \boldsymbol{\Gamma}_{t} \boldsymbol{D}   - \boldsymbol{B}^* \boldsymbol{\Gamma}_{t}  - \left(\boldsymbol{B^*}\boldsymbol{\Gamma}_{t}\right)^*  \\
& &  \;  + \;  \left({C}^* \boldsymbol{\Gamma}_{t} + {F}^* \boldsymbol{\Gamma}_{t} \boldsymbol{D} \right)^*\left( {N} + {F}^*\boldsymbol{\Gamma}_{t} {F} \right)^{-1} 
\left({C}^* \boldsymbol{\Gamma}_{t}  +{F}^* \boldsymbol{\Gamma}_{t} \boldsymbol{D} \right), \quad t \in [0,T],     
\end{array}
\right.
\end{equation}
where  $A^{mr}$ is the mean-reverting operator  acting on measurable functions $\varphi \in L^{1}(\mu)$ by
\begin{align*}
(A^{mr}\varphi)(\theta) &=-\theta \varphi(\theta), \quad \theta \in \R_+,
\end{align*}
$\boldsymbol{B}$, $\boldsymbol{D}$ are the  integral operators on  $L^1(\mu)$ (defined  similarly to $\boldsymbol{\Gamma}$) induced by the constant matrices $B$, $D$,  and 
by misuse of notation, $C$, $F$ denote the respective constant operators on $\R^m$ induced by the matrices $C$, $F$:
\begin{align*}
(C a)(\theta) &= C a, \quad (F a) (\theta) \; = F a, \quad \theta  \in \R_+,\quad a  \in \R^m. 
\end{align*}
Here the symbol $^*$ denotes the adjoint operation  with respect to the dual pairing. 
The last equation \eqref{Riccatioperator} is more in line with  the formulation of operator Riccati equations appearing in LQ control theory in Hilbert or Banach spaces, 
see  \cite{curtain}, \cite{daprato}, \cite{flandoli1986direct},  \cite{guates05}, \cite{lasiecka}, \cite{kos}, \cite{hu2018stochastic}, \cite{arta}.     

Let us also mention that a related infinite-dimensional Riccati equation appeared in \cite{alfonsi2013capacitary} for the minimization problem of an energy functional defined in terms of a non-singular (i.e. $K(0)$ $<$ $\infty$) completely monotone kernel.

 The main contribution of the  paper is to establish the  existence and uniqueness  of a   solution to the  kernel Riccati equation \eqref{formalkernelRiccati}.
  The aforementioned results on the solvability of  Riccati equations in infinite  dimensional spaces cannot be directly applied in our setting for two reasons. First, they are valid for Hilbert and reflexive Banach spaces,  while  $L^1(\mu)$ is in general not reflexive, unless $\mu$ has finite support, and mostly apply to the cases without multiplicative noise, i.e., $\boldsymbol{D}$ $=$ $0$, and 
without control on the diffusion coefficient, i.e. $\boldsymbol{F}$ $=$ $0$, with the noticeable exception in \cite{hu2018stochastic}. 
  Second, they concern the operator Riccati equation \eqref{Riccatioperator}, which is not enough for our purposes, as we still need to argue that  $\boldsymbol{\Gamma}$  is an integral operator induced by some bounded symmetric kernel function  $\Gamma$ satisfying \eqref{formalkernelRiccati}. 
We will therefore work directly on the level of  the kernel Riccati equation  \eqref{formalkernelRiccati} (which will also be referred to as integral operator Riccati equation) by adapting 
the technique  used  in classical finite-dimensional linear-quadratic control  theory \citep[Theorem 6.7.2]{yong1999stochastic} 
with the follo\-wing steps: (i) we first construct  a sequence of Lyapunov solutions 
$(\Gamma^i)_{i\geq 0}$  by successive iterations, (ii) we then show the convergence of  $(\Gamma^i)_{i\geq 0}$  in $L^1(\mu\otimes \mu)$, (iii) we next prove 
that the limiting point is a solution to the Riccati equation \eqref{formalkernelRiccati}, (iv) we finally prove the continuity and uniqueness for the Riccati solution. We stress that such method has already been applied to prove the existence of operator  Riccati equations of the form \eqref{Riccatioperator} in Hilbert spaces (see \cite{hu2018stochastic}) and in reflexive Banach spaces (see \cite{arta}). However, for the kernel Riccati equation \eqref{formalkernelRiccati},  the proof is more intricate. The reason is that we need to establish the convergence of the kernels $(\Gamma^i)_{i\geq 0}$ which is a stronger requirement than the usual convergence of the operators $(\boldsymbol{\Gamma}^i)_{i \geq 0}$. As a consequence, we obtain that the sequence of integral operators $(\boldsymbol{\Gamma}^i)_{i \geq 0}$ converges to some limit which is also an integral operator.

The paper is organized as follows.  We formulate precisely  our  main result in Section \ref{secmain}.  Section \ref{secLya} establishes the existence of a solution to an infinite dimensional  Lyapunov equation. Section \ref{secRiccati} is  devoted to the solvability of the Riccati equation. Finally, we collect in the Appendix some useful results.


\section{Preliminaries and main result} \label{secmain}


Let us first  introduce some notations that will be used in the sequel of the paper. 
For any $d_{11}\times d_{12}$-matrix valued measure $\mu_1$,  and $d_{21} \times d_{22}$-matrix valued measure $\mu_2$ on $\R_+$,  
the Banach space  $L^1(\mu_1 \otimes \mu_2)$ consists of $\mu_1\otimes \mu_2$-a.e.~equivalence classes of 
$|\mu_1|\otimes |\mu_2|$-integrable functions $\Phi:\R_+^2 \to \R^{d_{11}\times d_{21}}$  endowed  with the norm 
$\|\Phi\|_{L^1(\mu_1 \otimes \mu_2)}=\int_{\R^2_+} {|\mu_1|}(d\theta) |\Phi(\theta,\tau)| {|\mu_2|}(d\theta) < \infty.$ 
For any such $\Phi$, the integral 
$$ \int_{\R_+^2} \mu_1(d\theta)^\top \Phi(\theta,\tau) \mu_2(d\tau) $$ 
is well defined by virtue of \cite[Theorem 5.6]{GLS:90}. 
We also denote 
by  $L^\infty(\mu_1\otimes\mu_2)$ the set of  measurable functions $\Phi:\R_+^2 \to \R^{d_{11}\times d_{21}}$, which are bounded $\mu_1\otimes \mu_2$-a.e. 

\vspace{1mm}

We shall prove the existence of a nonnegative symmetric kernel {solution} to the Riccati equation \eqref{formalkernelRiccati} in the following sense.

\begin{definition}\label{D:nonnegative}
	Let $\Gamma:\R_+^2\to \R^{d\times d}$ such  that $\Gamma \in L^{\infty}(\mu \otimes \mu)$.  We say that $\Gamma$ is symmetric if
	\begin{align*}
	\Gamma(\theta,\tau) &= \; \Gamma(\tau,\theta)^\top, \quad \mu\otimes \mu-a.e.
	\end{align*}  
	and  nonnegative if 
	\begin{align*}
	\int_{\R_+^2} \varphi(\theta)^\top \mu(d\theta)^\top	\Gamma(\theta,\tau) \mu(d\tau) \varphi(\tau) &\geq \; 0, \quad \mbox{ for all }   \varphi \in L^{1}(\mu). 
	\end{align*} 
	We denote by $\S_+^d(\mu \otimes \mu)$ the set of all  symmetric and nonnegative  $\Gamma \in L^{\infty}(\mu \otimes \mu)$, 
	and we define on  $\S_+^d(\mu \otimes \mu)$ the  partial order   relation 
	$\Gamma^1 \succeq_{\mu} \Gamma^2$ whenever  $(\Gamma^1-\Gamma^2) \in \S^d_+(\mu \otimes \mu)$. 
\end{definition}

{\begin{remark} (On utilise cette notation dans le lemme 4.1(ii)) 
$\S_+^d(\delta_0 \otimes \delta_0)$ reduces to $\S^d_+$, the cone of  symmetric semidefinite $d\times d$--matrices.
 \ep
\end{remark}
}

\vspace{1mm}

Given a kernel  $\Gamma$, we define the integral  operator $\boldsymbol{\Gamma}$ by
\begin{align} \label{defintegral}
(\boldsymbol{\Gamma} \varphi)(\theta) &= \int_{\R_+} \Gamma(\theta,\tau) \mu(d\tau) \varphi(\tau). 
\end{align}
Notice that when $\Gamma$ $\in$ $L^1(\mu\otimes\mu)$, the operator $\boldsymbol{\Gamma}$ is well-defined on $L^\infty(\mu)$,  
and we have  $\boldsymbol{\Gamma}\varphi$ $\in$ $L^1(\mu^\top)$, for $\varphi$ $\in$ $L^\infty(\mu)$. In this case $\langle \boldsymbol{\Gamma}\varphi ,\psi  \rangle_{\mu^\top}$ is well defined for all $\varphi,\psi \in L^\infty(\mu)$. Moreover,  
when  $\Gamma \in  L^{\infty}(\mu\otimes \mu)$,  the operator  $\boldsymbol{\Gamma}$ is well-defined on $L^1(\mu)$, and we have $\boldsymbol{\Gamma}\varphi$ $\in$ 
$L^\infty(\mu^\top)$ for  $\varphi$ $\in$ $L^1(\mu)$.  In this case, $\langle \varphi, \boldsymbol{\Gamma}\psi \rangle_{\mu}$ is well defined for all $\varphi,\psi \in L^1(\mu)$.

Whenever  $\Gamma$ $\in$ $L^{\infty}(\mu \otimes \mu)$ is  a symmetric kernel, we have
\begin{align*}
\langle \varphi,  \boldsymbol{\Gamma} \psi \rangle_{\mu}, &= \; \langle   \psi ,  \boldsymbol{\Gamma}\varphi \rangle_{\mu},  
\quad \varphi, \psi \in L^1(\mu), 
\end{align*}
and $\boldsymbol{\Gamma}$ is said to be symmetric. For $\Gamma \in \S^d_+(\mu\otimes \mu)$, the nonnegativity reads
\begin{align*}
\langle \varphi,  \boldsymbol{\Gamma} \varphi \rangle_{\mu} & \geq \; 0, \quad \forall \varphi \in L^{1}(\mu). 
\end{align*}
The kernel Riccati equation \eqref{formalkernelRiccati} can be compactly written in the form 
\begin{equation} \label{eq:Riccatis_monotone}
\begin{array}{rclc}
 \dot{\Gamma}_{t}(\theta,\tau)  &=&  (\theta+\tau)\Gamma_{t}(\theta,\tau) -  \mathcal {R} (\Gamma_{t})(\theta,\tau),  & \quad  \Gamma_{T}(\theta, \tau) = 0  
\end{array}
\end{equation}
where we define
\begin{align}
\mathcal {R}(\Gamma) (\theta,\tau)&= \; Q +D^\top \int_{\R_+^2} \mu(d\theta')^\top \Gamma(\theta',\tau') \mu(d\tau') D
 + B^\top \int_{\R_+} \mu(d\theta')^\top \Gamma(\theta', \tau)  \\  
 &\quad\quad  + \;  \ints \Gamma(\theta, \tau') \mu(d \tau') B   - S(\Gamma)(\theta)^\top \hat{N}^{-1}(\Gamma) S(\Gamma)(\tau)  \label{eq:R1} 
\end{align}
with 
\begin{equation}  \label{eq:hatN}
\left\{
\begin{array}{rl}
S(\Gamma)(\tau)  & = \;   C^\T \ints \mu(d\theta)^\top \Gamma(\theta,\tau)  + F^\top \int_{\R_+^2}  \mu(d\theta' )^\top
\Gamma(\theta',\tau')  \mu(d\tau') D \\
\hat{N}(\Gamma) & = \;  N + F^\T \int_{\R_+^2} \mu(d\theta)^\T \Gamma(\theta,\tau)   \mu(d\tau)  F.  
\end{array}
\right.
\end{equation}

\vspace{1mm}

The  following definition specifies the concept of  solution to the kernel Riccati equation  \eqref{eq:Riccatis_monotone}.

\begin{definition}
By a solution to the  kernel Riccati equation \eqref{eq:Riccatis_monotone}, we mean a function $\Gamma$ $\in$ $C([0,T],L^1(\mu\otimes \mu))$  such that  
\begin{align}
 \Gamma_{t}(\theta,\tau)  &= \;   \int_t^T e^{-(\theta+\tau)(s-t)}     \mathcal {R} (\Gamma_{s})(\theta,\tau) d  s,  &&  0\leq t\leq T, \quad  \mu\otimes \mu-a.e.
\label{eq:Riccati_monotone_kernel_mild}
\end{align}
where $\mathcal R$  is defined  by \eqref{eq:R1}.   In particular  $\hat N(\Gamma_{t})$ given by \eqref{eq:hatN} is  invertible for all $t\leq T$. 
\end{definition}

\vspace{2mm}

Our main result is stated as follows. 
\begin{theorem}\label{T:Riccatiexistence}  Let $\mu$ be a $d\times d'$-signed matrix measure satisfying \eqref{eq:mucond}. 
	Assume that  
	\begin{align}
	\label{assumption:QN}
	Q \in \S^d_+ , \quad  N-\lambda I_m \in  \S^m_+,
	\end{align}
	for some $\lambda>0$. Then, there exists a unique  solution $\Gamma$  $\in$ $C([0,T],L^1(\mu\otimes \mu))$ 
	to the kernel Riccati equation \eqref{eq:Riccatis_monotone} such that $\Gamma_t \in \S^d_+(\mu\otimes \mu)$, for all $t\leq T$.  
	Furthermore,   there exists some positive constant $M$ such that  
	\begin{align}
	\label{eq:estimateRiccati}
	\int_{\R_+} |\mu|(d\tau) |\Gamma_{t}(\theta,\tau)| & \leq \;  M, \quad \mu-a.e., \quad 0 \leq t\leq T.
	\end{align}
\end{theorem}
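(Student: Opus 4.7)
The plan is to adapt the Newton-type iteration scheme from finite-dimensional LQ theory \citep[Theorem 6.7.2]{yong1999stochastic} to the kernel setting, using the Lyapunov results already established in Section \ref{secLya}. The idea is to freeze the feedback appearing inside the nonlinear term $S(\Gamma)^\top \hat N(\Gamma)^{-1} S(\Gamma)$ of $\mathcal R$ at the previous iterate, which turns the Riccati equation into a linear Lyapunov equation that can be solved in $C([0,T], L^1(\mu\otimes\mu))$.

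First, I would define the iteration. Set $\Gamma^0 \equiv 0$, and given $\Gamma^i \in \S_+^d(\mu\otimes\mu)$, let the feedback operator be $K^i = \hat N(\Gamma^i)^{-1} S(\Gamma^i)$, which is well-defined since $\hat N(\Gamma^i) \succeq N \succeq \lambda I_m$ by \eqref{assumption:QN} and non-negativity of $\Gamma^i$. Define $\Gamma^{i+1}$ as the solution of the Lyapunov equation associated with the modified drift $B - C K^i$ and modified diffusion $D - F K^i$, and with source term $Q + (K^i)^\top N K^i$. Section \ref{secLya} should provide existence of such $\Gamma^{i+1} \in C([0,T], L^1(\mu\otimes\mu)) \cap \S_+^d(\mu\otimes\mu)$ together with a quantitative bound of the form \eqref{eq:estimateRiccati}.

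Next I would establish monotonicity $\Gamma^i \succeq_\mu \Gamma^{i+1}$. The key algebraic identity is the completion of squares: for any feedback $K$,
\begin{align*}
\mathcal R(\Gamma) &= Q + K^\top N K + (B - CK)^\top \boldsymbol\Gamma + (\boldsymbol\Gamma (B - CK))^* \\
&\quad + (D - FK)^\top \boldsymbol\Gamma (D - FK) - (K - K(\Gamma))^\top \hat N(\Gamma)(K - K(\Gamma)),
\end{align*}
where $K(\Gamma) = \hat N(\Gamma)^{-1} S(\Gamma)$. Writing the Lyapunov equation satisfied by $\Gamma^{i+1}$ with feedback $K^i$ and subtracting it from the Riccati equation evaluated at $\Gamma^i$ with the same feedback, the difference $\Gamma^i - \Gamma^{i+1}$ satisfies a Lyapunov-type equation with non-negative source $(K^{i-1} - K^i)^\top \hat N(\Gamma^i)(K^{i-1} - K^i)$ (by induction $i \geq 1$). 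By the comparison principle for the Lyapunov equation, this forces $\Gamma^i \succeq_\mu \Gamma^{i+1}$. Similarly one shows $\Gamma^{i+1} \succeq_\mu 0$.

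Having obtained a $\succeq_\mu$-decreasing sequence bounded below by $0$, the convergence step is the most delicate point, because $L^1(\mu)$ is not reflexive and the monotonicity is only in the operator order, not pointwise on $\R_+^2$. Here I would first derive a uniform estimate $\|\Gamma^i\|_{L^\infty(\mu)\otimes L^1(\mu)} \leq M$ from the explicit Lyapunov bound, independently of $i$, using the kernel representation $e^{-(\theta+\tau)(s-t)}$ in the mild formulation and the integrability \eqref{eq:mucond}. Combined with monotonicity, this should give pointwise $\mu\otimes\mu$-a.e.\ convergence $\Gamma^i \to \Gamma$ and then $L^1(\mu\otimes\mu)$ convergence by dominated convergence. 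The dominating function comes from $\Gamma^0$ (or a bound built from it and the uniform estimate).

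Once convergence is secured, I would pass to the limit in the mild equation \eqref{eq:Riccati_monotone_kernel_mild}: the invertibility of $\hat N(\Gamma^i)$ is preserved because $\hat N(\Gamma) \succeq N \succeq \lambda I_m$, so $K^i \to K(\Gamma)$ and $\mathcal R(\Gamma^i) \to \mathcal R(\Gamma)$ in $L^1(\mu\otimes\mu)$. Continuity in $t$ then follows from the mild formulation and dominated convergence. For uniqueness, given two solutions $\Gamma^1, \Gamma^2$ satisfying the estimate \eqref{eq:estimateRiccati}, I would write their difference as a Lyapunov equation with bounded coefficients and a source that is quadratic in $\Gamma^1 - \Gamma^2$ (using the same completion-of-squares identity, together with the lower bound on $\hat N$), then apply Gronwall to $\|\Gamma^1_t - \Gamma^2_t\|_{L^1(\mu\otimes\mu)}$.

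The main obstacle I anticipate is the convergence step in $L^1(\mu\otimes\mu)$: operator-monotone convergence in $\S_+^d(\mu\otimes\mu)$ does not automatically imply kernel convergence, and producing a suitable $|\mu|\otimes|\mu|$-integrable dominating function requires careful use of both the singularity condition \eqref{eq:mucond} at $\theta = 0$ and the decay supplied by the kernel $e^{-(\theta+\tau)(s-t)}$. This is exactly the point stressed in the introduction as being stronger than the purely operator-theoretic convergence known in the reflexive setting.
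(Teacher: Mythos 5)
Your overall scheme (freeze the feedback at the previous iterate, solve a Lyapunov equation at each step, get a $\succeq_\mu$-decreasing nonnegative sequence via completion of squares, pass to the limit, and prove uniqueness by a Gronwall argument on the difference) is exactly the route taken in the paper. However, the step you yourself flag as the delicate one contains a genuine gap. You claim that the uniform bound plus $\succeq_\mu$-monotonicity ``should give pointwise $\mu\otimes\mu$-a.e.\ convergence $\Gamma^i\to\Gamma$ and then $L^1(\mu\otimes\mu)$ convergence by dominated convergence,'' with a dominating function built from $\Gamma^0$. This does not work: the order $\succeq_\mu$ of Definition~\ref{D:nonnegative} is a quadratic-form statement, $\int\varphi^\top\mu^\top(\Gamma^i-\Gamma^{i+1})\mu\,\varphi\geq 0$ for $\varphi\in L^1(\mu)$, and for a diffuse $\mu$ it carries no pointwise information about the kernel values $\Gamma^i_t(\theta,\tau)$. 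Already for $d=1$, a kernel of the form $f(\theta)f(\tau)$ with sign-changing $f$ is nonnegative in the sense of $\succeq_\mu$ but not pointwise nonnegative, so a $\succeq_\mu$-decreasing bounded sequence of kernels need not be pointwise monotone, need not converge $\mu\otimes\mu$-a.e., and is not pointwise dominated by $\Gamma^0$. In finite dimensions one recovers entries of a matrix from quadratic forms by polarization, which is what makes the Yong--Zhou argument close there; with a general measure $\mu$ you can only recover integrated quantities, so your dominated-convergence step has no basis.

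What the paper does instead, and what your argument is missing, is the following. From monotonicity one only extracts convergence of the \emph{integrated} objects $\Uc^i_t=\int_{\R_+^2}\mu(d\theta)^\top\Gamma^i_t(\theta,\tau)\mu(d\tau)$ and of weighted versions $\Vc^i_t(\xi)$ (Lemma~\ref{lemma:simple_convergence_lyapu}), upgraded to convergence uniform in $t$ by Dini's theorem after showing the limit $\Uc$ is continuous because it solves a limiting integral equation (Lemma~\ref{L:uniform_convergence_U}); in parallel one needs the uniform-in-$i$ bound on $\int_{\R_+}|\Gamma^i_t(\theta,\tau)||\mu|(d\tau)$ of Lemma~\ref{lemma:simple_int_uniform_bound}. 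The kernel convergence is then obtained by proving directly that $(\Gamma^i_t)_i$ is Cauchy in $L^1(\mu\otimes\mu)$: the difference $\Gamma^i-\Gamma^j$ solves a Lyapunov equation whose source is controlled by $\rho^{ij}=\Theta^{i-1}-\Theta^{j-1}$, the uniform convergence of $\Uc^i$ makes $\rho^{ij}$ small up to terms involving $\Gamma^i-\Gamma^j$ itself, and two applications of the generalized Gronwall inequality for convolution equations close the loop (Lemma~\ref{L:cauchy}). So the smallness is propagated through the Lyapunov/Gronwall structure rather than through pointwise monotone convergence; without this (or an equivalent device), your passage to the limit in the mild equation and your continuity argument are not justified. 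The remaining parts of your proposal (construction and positivity of the iterates, identification of the limit as a Riccati solution, the estimate \eqref{eq:estimateRiccati} via a convolution Gronwall bound, and uniqueness by recasting the difference of two solutions as a homogeneous linear Lyapunov equation) do match the paper's Steps 1, 3 and 4.
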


The rest of the paper is dedicated to the proof  of Theorem~\ref{T:Riccatiexistence}.	Lemmas~\ref{prop:existence_riccati} and \ref{L:Riccatiestimate} provide the existence of a solution in $C([0,T],L^1(\mu\otimes \mu))$ such that $\Gamma_t \in \S^d_+(\mu\otimes \mu)$, for all $t\leq T$. The uniqueness statement is established in Lemma~\ref{l:unicity_riccati}.

\section{Infinite dimensional  Lyapunov equation} \label{secLya} 

Fix $d_{11},d_{12},d_{21},d_{22} \in \mathbb N$. For each $i=1,2$, we  let  $\mu_i$ be a  $d_{i1} \times d_{i2}$-matrix valued measure on $\R_+$, and we define the scalar kernel
\begin{align}
\label{eq:Kbar}
\bar K_i(t)= \int_{\R_+} e^{-\theta t} |\mu_i|(d\theta), \quad t>0,
\end{align}
which is in $L^2([0,T],\R)$ provided that $\mu_i$ satisfies  $\int_{\R_+} \big( 1 \wedge \theta^{-1/2}\big) |\mu_i|(d\theta)$ $<$ $\infty$ (which we shall assume in this section), 
see  \cite[Lemma A.1]{abietal19a}. 

\vspace{1mm}

We first establish the existence and uniqueness for the following infinite dimensional Lyapunov equation: 

\begin{align}
\Psi_t(\theta, \tau) &=  \int_t^T e^{-(\theta + \tau)(s-t)} F(s,\Psi_s)(\theta,\tau)ds, \quad { t\leq T, \quad  \mu_1\otimes\mu_2}-a.e. 	\label{def:infinite_dim_lyapunov1} 
\end{align}
where
\begin{align}
F(s,\Psi)(\theta, \tau)&= \tilde Q_s(\theta, \tau) + \tilde D^1_s(\theta)^\T \int_{\R_+^2} { \mu_1(d\theta')^\T \Psi(\theta',\tau ') \mu_2(d\tau ')} \tilde D^2_s(\tau) \\
&\quad  + \tilde B^1_s(\theta)^\T \int_{\R_+} \mu_1(d\theta')^\T \Psi(\theta',\tau)  + \int_{\R_+} \Psi(\theta,\tau ') \mu_2(d\tau ') \tilde B^2_s(\tau), 	\label{def:infinite_dim_lyapunov2}
\end{align}
for some coefficients $\tilde Q,\tilde B^1,\tilde B^2,\tilde D^1,\tilde D^2$ satisfying suitable assumptions made precise  in the following theorem.  

\begin{theorem}
	\label{thm:existence_unicite_lyapunov}
    Let  $\tilde Q:[0,T]\times \R_+^2 \to \R^{d_{11}\times d_{21}}$ be a measurable function and for each $i=1,2$, $\tilde B^i,\tilde D^i: [0,T]\times\R_+ \to \R^{d_{i2}\times d_{i1}} $ be two measurable  functions. Assume that  there exists $\kappa>0$ such that
    	\begin{align}
        	\label{assumption:lyapunov_ODE}
            |\tilde Q_s(\theta,\tau)| +	\sum_{i=1}^2 |\tilde B^i_s(\theta)| + |\tilde D^i_s(\theta)|^{1/2} &\leq \kappa, \quad  dt\otimes \mu_1 \otimes \mu_2-a.e.
    	\end{align}
    	Then, there exists a unique   solution $\Psi \in C([0,T],L^1(\mu_1 \otimes \mu_2))$  to \eqref{def:infinite_dim_lyapunov1}-\eqref{def:infinite_dim_lyapunov2}. In particular, 
    	\begin{align}
        	\label{eq:estimatelyap1}
        	\sup_{t \leq T} \|\Psi_{t}\|_{L^1(\mu_1 \otimes \mu_2)} < \infty. 
    	\end{align}
    	Furthermore, there exists a constant $\kappa'>0$ such that 
    	\begin{align}
        	\int_{\R_+} |\mu_1|(d\theta)|\Psi_t(\theta,\tau)| \leq \kappa',\quad \mu_2-a.e., \quad t\leq T,  \label{eq:estimatelyap2} \\
        	\int_{\R_+} |\mu_2|(d\tau)|\Psi_t(\theta,\tau)| \leq \kappa',\quad \mu_1-a.e., \quad t\leq T.  \label{eq:estimatelyap3}
    \end{align}
\end{theorem}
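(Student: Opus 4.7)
My plan is a Banach fixed point argument in the space $\lambdaspace$, exploiting that $F(s,\cdot)$ is affine. Let $\mathcal{T}$ denote the map sending $\Psi$ to the right-hand side of \eqref{def:infinite_dim_lyapunov1}. I would: (i) check that $\mathcal{T}$ sends $C([0,T],L^1(\mu_1\otimes\mu_2))$ to itself; (ii) show that, equipped with the weighted norm $\|\Psi\|_\lambda=\sup_{t\leq T}e^{-\lambda(T-t)}\|\Psi_t\|_{L^1(\mu_1\otimes\mu_2)}$, $\mathcal{T}$ is a strict contraction for $\lambda$ large; (iii) extract the estimates \eqref{eq:estimatelyap1}--\eqref{eq:estimatelyap3} from the fixed point equation.

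For step (i), I would bound each of the four pieces of $F(s,\Psi_s)$ in the $L^1(\mu_1\otimes\mu_2)$-norm after multiplication by $e^{-(\theta+\tau)(s-t)}$. Using \eqref{assumption:lyapunov_ODE} and Fubini--Tonelli, the $\tilde Q$-term is dominated by $\kappa\,\bar K_1(s-t)\bar K_2(s-t)$; the $\tilde D^1,\tilde D^2$-term by $\kappa^2\,\bar K_1(s-t)\bar K_2(s-t)\|\Psi_s\|_{L^1(\mu_1\otimes\mu_2)}$; and each $\tilde B^i$-term by $\kappa\,\bar K_i(s-t)\|\Psi_s\|_{L^1(\mu_1\otimes\mu_2)}$, where the factor $e^{-\tau(s-t)}$ (respectively $e^{-\theta(s-t)}$) is bounded by $1$ before applying Fubini. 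Since $\bar K_i\in L^2([0,T])$, Cauchy--Schwarz gives $\int_0^T\bar K_1(u)\bar K_2(u)\,du<\infty$, so the bounds are finite; continuity in $t$ of $\mathcal{T}\Psi$ follows from dominated convergence with these same majorants. For step (ii), the difference $\mathcal{T}\Psi^1_t-\mathcal{T}\Psi^2_t$ satisfies
\[
\|\mathcal{T}\Psi^1_t-\mathcal{T}\Psi^2_t\|_{L^1(\mu_1\otimes\mu_2)}\leq \int_t^T G(s-t)\,\|\Psi^1_s-\Psi^2_s\|_{L^1(\mu_1\otimes\mu_2)}\,ds,
\]
with $G(u)=\kappa^2\bar K_1(u)\bar K_2(u)+\kappa\bar K_1(u)+\kappa\bar K_2(u)\in L^1([0,T])$. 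Multiplying by $e^{-\lambda(T-t)}$ and taking the supremum in $t$ yields a Lipschitz constant bounded by $\sup_t\int_t^T G(s-t)e^{-\lambda(s-t)}ds$, which tends to $0$ as $\lambda\to\infty$ by dominated convergence. Banach's theorem then supplies the unique fixed point $\Psi$, and \eqref{eq:estimatelyap1} follows by applying the bounds of step (i) once to $\mathcal{T}\Psi=\Psi$.

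For the marginal estimates \eqref{eq:estimatelyap2}--\eqref{eq:estimatelyap3}, I would take absolute values inside the fixed point equation and integrate against $|\mu_1|(d\theta)$ only. For $\phi_t(\tau):=\int_{\R_+}|\mu_1|(d\theta)|\Psi_t(\theta,\tau)|$, a case-by-case treatment of the four pieces (bounding the $\tilde Q$, $\tilde D^i$ and $\psi$-type pieces by $\tau$-free constants via \eqref{eq:estimatelyap1}, $\kappa$, and $\bar K_1\in L^1([0,T])$, and isolating only the $\tilde B^1$-piece, which carries the genuine $\tau$-dependence through $\phi_s(\tau)$) leads to
\[
\phi_t(\tau)\leq C+\kappa\int_t^T \bar K_1(s-t)\,\phi_s(\tau)\,ds,\qquad \mu_2\text{-a.e.},
\]
for some constant $C$ independent of $\tau$. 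A generalised Grönwall inequality with $L^1$-kernel then produces the uniform bound \eqref{eq:estimatelyap2}, and \eqref{eq:estimatelyap3} follows by the symmetric argument integrating $|\mu_2|(d\tau)$ instead.

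The main obstacle is the careful bookkeeping in step (i): each of the four terms in $F$ must be reduced, by Fubini and a judicious choice of which exponential factor to bound by $1$, to a product $\bar K_1\bar K_2$ or to a single $\bar K_i$ times a $\|\Psi_s\|_{L^1(\mu_1\otimes\mu_2)}$ factor, so that Cauchy--Schwarz on $L^2([0,T])$ closes the argument. The marginal estimates introduce the extra subtlety of identifying precisely which term still genuinely depends on $\tau$ after integrating out $\theta$, since only that term may enter the Grönwall inequality while the others must be absorbed into the constant $C$.
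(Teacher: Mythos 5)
Your overall strategy coincides with the paper's: a contraction argument for the map $\mathcal T$ under the Bielecki-type norm $\|\cdot\|_\lambda$, with the same four-term decomposition of $F$, the same choice of which exponential to discard so that each term is majorized by $\bar K_1(s-t)\bar K_2(s-t)$ or a single $\bar K_i(s-t)$ times $\|\Psi_s\|_{L^1(\mu_1\otimes\mu_2)}$, and the same Cauchy--Schwarz closing step; the marginal bounds \eqref{eq:estimatelyap2}--\eqref{eq:estimatelyap3} are then obtained exactly as in Lemma~\ref{L:lyapunovestimate}, by integrating out one variable, absorbing the $\tau$-free pieces into a constant via \eqref{eq:estimatelyap1} and \eqref{assumption:lyapunov_ODE}, and applying the generalized Gronwall inequality for convolution equations to the single term that retains the $\tau$-dependence.

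The one step that is under-justified is the continuity in $t$, and in your set-up it is not optional: you run the fixed point in $C([0,T],L^1(\mu_1\otimes\mu_2))$, so you must show $\mathcal T$ preserves continuity \emph{before} the fixed point exists, for an arbitrary (not yet estimated) $\Psi$. The claim that this ``follows from dominated convergence with these same majorants'' does not stand as written: the step (i) majorants depend on $t$ through $\bar K_i(s-t)$, and a $t$-uniform dominating function is not available for free when $\mu_1,\mu_2$ are infinite (which \eqref{eq:mucond} allows). Concretely, in the $\tilde B^1$-term the only decay in $\theta$ comes from the factor $e^{-\theta(s-t)}$, i.e.\ precisely from the $t$-dependent part, while the remaining factor $\int_{\R_+}|\mu_1|(d\theta')|\Psi_s(\theta',\tau)|$ is at this stage only integrable in $(s,\tau)$, not bounded; recall that the pointwise bound on $F(s,\Psi_s)$ only becomes available \emph{after} \eqref{eq:estimatelyap2}--\eqref{eq:estimatelyap3}. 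Two repairs are possible: (a) prove continuity of $\mathcal T\Psi$ directly by splitting the difference at times $s\leq t$ into the contribution of the short interval between $s$ and $t$ (controlled by $\|\bar K_1\|_{L^2(0,t-s)}\|\bar K_2\|_{L^2(0,t-s)}$) and the common interval, where the difference of exponentials must be handled through the $L^2$-continuity of translations of $\bar K_1,\bar K_2$ as in \eqref{L2cont}; or (b) follow the paper: do the fixed point in the space of bounded measurable $L^1(\mu_1\otimes\mu_2)$-valued functions (Lemma~\ref{L:lyap_ST}), establish \eqref{eq:estimatelyap2}--\eqref{eq:estimatelyap3} first, deduce that $F(s,\Psi_s)$ is a.e.\ bounded, and only then prove continuity of the fixed point as in Lemma~\ref{l:lyapunov_continuity}. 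With either repair, the rest of your argument matches the paper's proof.
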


\begin{remark}\label{R:Linfty}
		Since the solution $\Psi$ satisfies \eqref{eq:estimatelyap1}, \eqref{eq:estimatelyap2} and \eqref{eq:estimatelyap3},  it follows that
		\begin{align}
		|F(\Psi)(\theta,\tau)| \leq  c, \quad dt \otimes \mu \otimes \mu-a.e.
		\end{align}
		for some constant $c$. Combined with \eqref{def:infinite_dim_lyapunov1}, one gets that $\Psi_t$ $\in$  $L^{\infty}(\mu_1 \otimes \mu_2)$, for all $t\leq T$. 
		\ep
\end{remark}

\vspace{1mm}

The proof of Theorem~\ref{thm:existence_unicite_lyapunov}  follows from the three following lemmas.

\begin{lemma}\label{L:lyap_ST}
	Under the assumptions of Theorem~\ref{thm:existence_unicite_lyapunov}, there exists a unique $L^1(\mu_1\otimes \mu_2)$-valued function $t\in [0,T]$ $\to$ $\Psi_t$ satisfying  \eqref{eq:estimatelyap1},  and such that    \eqref{def:infinite_dim_lyapunov1}-\eqref{def:infinite_dim_lyapunov2} hold. 
\end{lemma}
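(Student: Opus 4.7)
\medskip
\noindent\textbf{Proof proposal for Lemma~\ref{L:lyap_ST}.}
The plan is to run a Banach fixed-point argument on the Banach space
$X := \{\Psi : [0,T] \to L^1(\mu_1\otimes\mu_2)\ \text{measurable with}\ \sup_{t\leq T}\|\Psi_t\|_{L^1(\mu_1\otimes\mu_2)} < \infty\}$, equipped, for a parameter $\beta > 0$ to be tuned later, with the weighted norm $\|\Psi\|_\beta := \sup_{0\leq t\leq T} e^{-\beta(T-t)} \|\Psi_t\|_{L^1(\mu_1\otimes\mu_2)}$. Define on $X$ the map $\mathcal{T}(\Psi)_t(\theta,\tau) := \int_t^T e^{-(\theta+\tau)(s-t)} F(s,\Psi_s)(\theta,\tau)\,ds$. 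The goal is to show that for some large enough $\beta$, $\mathcal{T}$ is a contraction on $(X,\|\cdot\|_\beta)$, whose unique fixed point is then a solution of \eqref{def:infinite_dim_lyapunov1}-\eqref{def:infinite_dim_lyapunov2} in the sense required.

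The key a priori bound is obtained by integrating the four summands of $F$ against $|\mu_1|\otimes |\mu_2|$, applying Fubini and the bounds \eqref{assumption:lyapunov_ODE}, and absorbing harmless factors $e^{-\theta(s-t)},\,e^{-\tau(s-t)}\leq 1$ where convenient. This yields
\begin{equation*}
\|\mathcal{T}(\Psi)_t\|_{L^1(\mu_1\otimes\mu_2)} \;\leq\; \kappa \int_t^T \bar K_1(s-t)\bar K_2(s-t)\,ds \;+\; \int_t^T g(s-t)\,\|\Psi_s\|_{L^1(\mu_1\otimes\mu_2)}\,ds,
\end{equation*}
with $g(u) := \kappa^2 \bar K_1(u)\bar K_2(u) + \kappa \bar K_1(u) + \kappa \bar K_2(u)$. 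The crucial input, inherited from \eqref{eq:mucond} and the $L^2$-property of $\bar K_i$ recalled below \eqref{eq:Kbar}, is that $g \in L^1([0,T])$ (the cross term $\bar K_1 \bar K_2$ being integrable by Cauchy--Schwarz). In particular $\mathcal{T}$ maps $X$ into itself. Since $F$ is affine in $\Psi$, the same computation applied to $\Psi^1-\Psi^2$ and the change of variable $u = s-t$ give $e^{-\beta(T-t)}\|\mathcal{T}(\Psi^1)_t - \mathcal{T}(\Psi^2)_t\|_{L^1(\mu_1\otimes\mu_2)} \leq \|\Psi^1-\Psi^2\|_\beta \int_0^T g(u) e^{-\beta u}\,du$. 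By dominated convergence the rightmost factor tends to $0$ as $\beta\to\infty$, so we fix $\beta$ with $\int_0^T g(u) e^{-\beta u}\,du < 1$, making $\mathcal{T}$ a strict contraction. Banach's theorem then produces a unique $\Psi \in X$ with $\mathcal{T}(\Psi) = \Psi$, i.e.\ a unique solution of \eqref{def:infinite_dim_lyapunov1}-\eqref{def:infinite_dim_lyapunov2} in the required class, and the bound \eqref{eq:estimatelyap1} is built into membership in $X$.

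The main obstacle is writing the a priori bound in a way that exploits \emph{only} the $L^2$-integrability of the scalar kernels $\bar K_i$, since the total variations $|\mu_i|$ themselves need not be finite. For the two single-integral terms in $F$ (those with $\tilde B^1, \tilde B^2$), the exponential $e^{-(\theta+\tau)(s-t)}$ must be paired with the variable in $|\mu_i|$ that does \emph{not} already appear in the $\Psi$-integral, so that one factor $\bar K_i(s-t)$ is extracted and the remaining double integral collapses to $\|\Psi_s\|_{L^1(\mu_1\otimes\mu_2)}$; for the double-integral term (with $\tilde D^1, \tilde D^2$) the exponential produces the product $\bar K_1(s-t)\bar K_2(s-t)$, which is precisely what Cauchy--Schwarz on $\bar K_i \in L^2$ rescues. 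Uniqueness is part of the fixed-point conclusion; the finer pointwise-in-$\tau$ (resp.\ $\theta$) bounds \eqref{eq:estimatelyap2}-\eqref{eq:estimatelyap3} and the time continuity promoted in Theorem~\ref{thm:existence_unicite_lyapunov} are deferred to the two remaining lemmas.
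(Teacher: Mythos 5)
Your proposal is correct and follows essentially the same route as the paper: a Banach fixed-point argument on the space of bounded $L^1(\mu_1\otimes\mu_2)$-valued maps equipped with the exponentially weighted norm $\|\cdot\|_\beta$, with the four terms of $F$ estimated term by term using $\bar K_i\in L^2([0,T])$ and Cauchy--Schwarz on the cross term $\bar K_1\bar K_2$. The only detail the paper adds at the end, which you may wish to note, is that the fixed point initially gives the equation ``for each $t$, $\mu_1\otimes\mu_2$-a.e.'' with a $t$-dependent null set, and the interchange to ``$\mu_1\otimes\mu_2$-a.e., for all $t$'' uses the continuity of $t\mapsto\Psi_t(\theta,\tau)$ for $\mu_1\otimes\mu_2$-a.e.\ $(\theta,\tau)$.
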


\begin{proof}
		The proof is an application of the  contraction mapping principle. We denote by $\mathcal B_T$ the space of  measurable and bounded functions $\Psi:[0,T] \to L^1(\mu_1 \otimes \mu_2)$  endowed with the norm 
		\begin{align*}
		\| \Psi\|_{\mathcal B_T}:= \sup_{t\leq T}\| \Psi_t \|_{L^1(\mu_1\otimes \mu_2)}  <\infty.
		\end{align*}
		The space $(\mathcal B_T,\|\cdot\|_{\mathcal B_T})$ is a Banach space. We consider the following family of norms on $\mathcal B_T$:
	\[
	\|\Psi\|_{\lambda} :=  \sup_{t \leq T} e^{-\lambda (T-t))} \|\Psi_{t}\|_{L^1(\mu_1 \otimes \mu_2)}, \quad \lambda >0.
	\]
	For every $\Psi \in \Bcal_T$, define a new function $t \mapsto (\Tcal \Psi)_t$ by
	\bes{
	(\Tcal{\Psi})_t(\theta, \tau) &= \int_t^T e^{-(\theta + \tau)(s-t)} F(s,\Psi_s)(\theta,\tau)ds, \quad \mu_1\otimes \mu_2-a.e.,
}
	where $F$ is given by \eqref{def:infinite_dim_lyapunov2}.  Since the norms $\|\cdot\|_{\mathcal B_T}$ and $\|\cdot\|_{\lambda}$ are equivalent, it is enough to find $\lambda>0$ such that $\mathcal T$ defines a contraction on $(\mathcal B_{T},\|\cdot\|_{\lambda})$. We thus look for $\lambda>0$ and $M<1$ such that 
	\begin{equation}\label{eq:contract temp_lyap}
	\| \mathcal T \Psi - \mathcal T \Phi \|_{\lambda} \leq M  \| \Psi-  \Phi \|_{\lambda}, \quad  \Psi,\Phi \in \mathcal B_T.	
	\end{equation} 
	
\textit{Step 1:}  We first prove that $\Tcal (\mathcal B_T) \subset \mathcal B_T$. Fix $\Psi \in \mathcal B_{T}$ and  $t \leq T $. An application of the triangle inequality combined with the assumption \eqref{assumption:lyapunov_ODE} leads to 
\begin{align*}
\|(\mathcal T \Psi)_t \|_{L^1(\mu_1 \otimes \mu_2)} &\leq \kappa \int_{\R_+^2} |\mu_1|(d\theta) |\mu_2|(d\tau) \int_t^T e^{-(\theta + \tau)(s-t)}  ds \\
& \quad  +  \kappa \int_{\R_+^2} |\mu_1|(d\theta) |\mu_2|(d\tau) \int_t^T e^{-(\theta + \tau)(s-t)}
 \|\Psi_s\|_{L^1(\mu_1\otimes \mu_2)}  ds  \\
& \quad  +  \kappa \int_{\R_+^2} |\mu_1|(d\theta) |\mu_2|(d\tau) \int_t^T e^{-(\theta + \tau)(s-t)}   \int_{\R_+} |\mu_1|(d\theta') |\Psi_s(\theta',\tau)| \\
& \quad +  \kappa \int_{\R_+^2} |\mu_1|(d\theta) |\mu_2|(d\tau) \int_t^T e^{-(\theta + \tau)(s-t)} \int_{\R_+} |\Psi_s(\theta,\tau ')| |\mu_2|(d\tau '),\\
&=\kappa(\textbf{I}_t+\textbf{II}_t+\textbf{III}_t+\textbf{IV}_t).
\end{align*}
Recalling the definition \eqref{eq:Kbar}, an application of Tonelli's theorem and Cauchy--Schwarz inequality yields
 $$ \sup_{t\leq T}\textbf{I}_t  = \sup_{t \leq T} \int_t^T \bar K_1(s-t)  \bar K_2(s-t) ds  \leq   \|\bar K_1\|_{L^2(0,T)} \|\bar K_2\|_{L^2(0,T)} , $$
 which is finite  due to  \cite[Lemma  A.1]{abietal19a}. Similarly,  
\begin{align*}
\sup_{t\leq T}\textbf{II}_t \leq \| \Psi\|_{\mathcal B_T}  \|\bar K_1\|_{L^2(0,T)} \|\bar K_2\|_{L^2(0,T)}<\infty.
\end{align*} 
 Now, as $e^{-\tau(s-t)} \leq 1$, and $e^{-\theta(s-t)} \leq 1$,  for $s$ $\geq$ $t$, and $\theta,\tau$ $\in$ $\R_+$,  another application of Tonelli's theorem leads to 
\begin{align*}
\sup_{t\leq T}\textbf{III}_t \leq     \| \Psi\|_{\mathcal B_T}    \|\bar K_1\|_{L^1(0,T)}<\infty, & \quad \mbox{ and } 
\sup_{t\leq T}\textbf{IV}_t \leq     \| \Psi\|_{\mathcal B_T}    \|\bar K_2\|_{L^1(0,T)}<\infty.
\end{align*} 
Combining the above inequalities proves that $\|\Tcal \Psi\|_{\mathcal B_T}< \infty$ and hence $\Tcal : \mathcal B_{T} \to \mathcal B_{T}$. \\

\textit{Step 2:} We prove that there exists $\lambda>0$ such that~\eqref{eq:contract temp_lyap} holds.  Fix $\lambda>0$ and   $\Psi,\Phi \in \mathcal S_T$ such that $\|\Psi\|_{\lambda}$ and  $\|\Phi\|_{\lambda}$ are finite. Similarly to \textit{Step 1},  the triangle inequality and Tonelli's theorem  lead to 
	\bes{
		\sup_{t \leq T} e^{-\lambda (T-t)} \|(\mathcal T \Psi)_t - (\mathcal T \Phi)_t\|_{L^1(\mu_1 \otimes \mu_2)}   \leq  M(\lambda) \sup_{t \leq T} e^{-\lambda (T-t)} &\|\Psi_t- \Phi_t\|_{L^1(\mu_1 \otimes \mu_2)},
	}
	where 
	\bes{
		M(\lambda) &= \kappa  \int_0^T e^{-\lambda s} \left( \bar{K}_1(s) + \bar{K}_2(s) +  \bar{K}_1(s) \bar{K}_2(s)\right) ds.
	}
	By the dominated convergence theorem, $M(\lambda)$ tends to $0$ as $\lambda$ goes to $+\infty$.  We can therefore choose $\lambda_0 > 0$  so that  \eqref{eq:contract temp_lyap} holds with $M(\lambda_0) < 1$. {An application of the contraction mapping theorem  yields the existence and uniqueness statement in $(\mathcal B_T,\| \cdot\|_{\mathcal B_T})$ such that \eqref{def:infinite_dim_lyapunov1} holds, $\mu_1\otimes \mu_2-$a.e., for all $t \leq T$.   The interchange of the quantifiers is possible due to the  continuity of $t\mapsto \Psi_{t}(\theta,\tau)$ $\mu_1\otimes\mu_2$-a.e., which ends the proof.}
\end{proof}

\vspace{1mm}

\begin{lemma}\label{L:lyapunovestimate}
 The function $\Psi$ constructed in  Lemma~\ref{L:lyap_ST} satisfies the estimates \eqref{eq:estimatelyap2}-\eqref{eq:estimatelyap3}.
\end{lemma}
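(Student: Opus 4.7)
The plan is to substitute the fixed-point identity \eqref{def:infinite_dim_lyapunov1} into the quantity $g_t(\tau):=\int_{\R_+}|\mu_1|(d\theta)|\Psi_t(\theta,\tau)|$, bound the four terms of $F$ pointwise using assumption \eqref{assumption:lyapunov_ODE} and the estimate \eqref{eq:estimatelyap1} already supplied by Lemma \ref{L:lyap_ST}, and end up with a linear scalar Volterra-type inequality of the form
\begin{align*}
g_t(\tau) \;\leq\; A \;+\; \kappa \int_t^T \bar K_1(s-t)\, g_s(\tau)\, ds,
\end{align*}
with $A$ a constant independent of $\tau$, to which a Gronwall inequality for $L^1$-kernels (cf.\ \cite[Ch.\ 9]{GLS:90}) applies since $\bar K_1 \in L^2([0,T])\subset L^1([0,T])$. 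The estimate \eqref{eq:estimatelyap3} is obtained by the fully symmetric argument after integrating against $|\mu_2|(d\tau)$.

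The four contributions of $F(s,\Psi_s)(\theta,\tau)$ are handled as follows. The term $\tilde Q_s$ is bounded by $\kappa$, which after integration against $e^{-(\theta+\tau)(s-t)}|\mu_1|(d\theta)\,ds$ and using $e^{-\tau(s-t)}\leq 1$ contributes at most $\kappa\|\bar K_1\|_{L^1(0,T)}$. For the $\tilde D$-term, the double integral over $\mu_1\otimes\mu_2$ of $\Psi_s$ is bounded by $\|\Psi_s\|_{L^1(\mu_1\otimes\mu_2)}\leq C$ (Lemma \ref{L:lyap_ST}), and using $|\tilde D^i_s(\theta)|\leq \kappa^2$ from \eqref{assumption:lyapunov_ODE}, we obtain a contribution bounded by $\kappa^4 C\|\bar K_1\|_{L^1(0,T)}$. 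The $\tilde B^1$-term, after integration, picks up exactly $\int_t^T \bar K_1(s-t)\, g_s(\tau)\, ds$ multiplied by $\kappa$; this is the Gronwall term. The $\tilde B^2$-term, which involves $h_s(\theta):=\int_{\R_+}|\Psi_s(\theta,\tau')||\mu_2|(d\tau')$, is controlled by bounding $e^{-\theta(s-t)}\leq 1$ and noting that $\int_{\R_+}|\mu_1|(d\theta)\,h_s(\theta)=\|\Psi_s\|_{L^1(\mu_1\otimes\mu_2)}\leq C$, hence its contribution is at most $\kappa C T$.

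Collecting these bounds yields the displayed inequality with $A=(\kappa+\kappa^4 C)\|\bar K_1\|_{L^1(0,T)}+\kappa C T$, uniform in $\tau$. Applying the generalized Gronwall lemma for $L^1$-convolution kernels (equivalently, iterating the inequality and using that the resolvent of $\kappa \bar K_1$ belongs to $L^\infty(0,T)$) gives a constant $\kappa'>0$, independent of $\tau$ and $t$, such that $g_t(\tau)\leq \kappa'$ for $\mu_2$-a.e.\ $\tau$ and all $t\leq T$. This establishes \eqref{eq:estimatelyap2}.

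The main point of care is the cross-term involving $h_s(\theta)$: unlike the other terms it does not decouple after integration in $\theta$ into something expressible through $g_s(\tau)$ alone. Thankfully the prior global estimate \eqref{eq:estimatelyap1} allows us to replace it by a pure constant, so it enters only into $A$ and not into the Gronwall kernel. The inequality \eqref{eq:estimatelyap3} follows by repeating the same computation, this time integrating the mild form against $|\mu_2|(d\tau)$ instead of $|\mu_1|(d\theta)$ and swapping the roles of $\bar K_1$ and $\bar K_2$.
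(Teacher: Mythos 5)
Your proposal is correct and follows essentially the same route as the paper: plug the mild equation into the integrated quantity, absorb the $\tilde Q$, $\tilde D$ and cross $\tilde B$-terms into a constant via the global bound \eqref{eq:estimatelyap1}, keep the matching $\tilde B$-term as a convolution term with kernel $\bar K_1$ (resp.\ $\bar K_2$), and conclude with the generalized Gronwall inequality for convolution equations, the other estimate following by symmetry (the paper simply treats \eqref{eq:estimatelyap3} first and, after a time reversal $t\mapsto T-t$, invokes \cite[Theorem 9.8.2]{GLS:90}, whose resolvent is only in $L^1$ — your parenthetical ``$L^\infty$'' is inessential since the cited Gronwall lemma is what is actually used).
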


\begin{proof}
	We only prove \eqref{eq:estimatelyap3},  as \eqref{eq:estimatelyap2} follows by the same argument.
Integrating \eqref{def:infinite_dim_lyapunov1} over the $\tau$ variable leads to 
	\begin{align}\label{def:infinite_dim_lyapunov1int}
\!	\int_{\R_+} \!\! \Psi_t(\theta,\tau)\mu_2(d\tau)= 	\int_{\R_+}  \int_t^T e^{-(\theta + \tau)(s-t)} F(s,\Psi_s)(\theta,\tau)ds \mu_2(d\tau), \; { t\leq T, \; {\mu_1}-a.e.}
	\end{align}
	Let us define the $\mu_1$-null set
	$$ {\Ncal = \{\theta \in \R_+:   \eqref{def:infinite_dim_lyapunov1int} \mbox{ does not hold} \}}, $$
		and fix $\theta \in \R_+ \setminus \Ncal$ and $t\leq T$. The triangle inequality  on \eqref{def:infinite_dim_lyapunov1int} and  assumption \eqref{assumption:lyapunov_ODE} yields
	\bes{
		\int_{\R_+} |\mu_2|(d\tau) |\Psi_t(\theta,\tau)| & \leq \kappa \int_{\R_+} |\mu_2|(d\tau)  \int_t^T e^{-(\theta + \tau)(s-t)}  ds \\
				&\;\;\;\;+ \kappa \int_{\R_+} |\mu_2|(d\tau)\int_t^T e^{-(\theta + \tau)(s-t)}    \|\Psi_s\|_{L^1(\mu_1 \otimes \mu_2)}   ds\\
		&\;\;\;\;+ \kappa\int_{\R_+} |\mu_2|(d\tau) \int_t^T e^{-(\theta + \tau)(s-t)} \int_{\R_+} |\mu_1|(d\theta') |\Psi_s(\theta',\tau)| ds \\
		&\;\;\;\;+   \kappa\int_{\R_+} |\mu_2|(d\tau)\int_t^T e^{-(\theta + \tau)(s-t)} \int_{\R_+} |\mu_2|(d\tau ') |\Psi_s(\theta,\tau ')|  ds.
	}
	Using the  bound $e^{-\theta(s-t)} \leq 1$, an application of Tonelli's theorem gives  
\bes{
	\int_{\R_+} |\mu_2|(d\tau) |\Psi_t(\theta,\tau)|  \leq & \kappa \left(1 + \sup_{s \leq T} \|\Psi_s\|_{L^1(\mu_1 \otimes \mu_2)} \right) \int_0^T { \left(1+\bar{K}_2(s)\right)} ds\\
	&\quad + \kappa \int_t^T \Bar{K}_2(s-t) \int_{\R_+} |\mu_2|(d\tau ') |\Psi_s(\theta,\tau ')| ds. \label{eq:estimatetemplemma}
}
After a change of variable, we get that the function $f^\theta$ defined by 
$$f^\theta(t)=   	\int_{\R_+} |\mu_2|(d\tau) |\Psi_{T-t}(\theta,\tau)|, \quad  t \leq T,$$
satisfies the convolution inequality 
$$ f^\theta(t)  \leq c_T   + \kappa \int_0^t \bar K_2(t-s)  f^\theta(s) ds,$$
with $c_T = \kappa \left(1 + \sup_{s \leq T} \|\Psi_s\|_{L^1(\mu_1 \otimes \mu_2)} \right) \int_0^T { \left(1+\bar{K}_2(s)\right)} ds < \infty.$
It follows from \eqref{eq:estimatelyap1} that $f^\theta(t)$ is finite $\mu_1\otimes dt$--a.e., so that an application of the generalized Gronwall inequality for convolution equations, see \cite[Theorem 9.8.2]{GLS:90}, yields the estimate \eqref{eq:estimatelyap3}.
\end{proof}

\vspace{1mm}

\begin{lemma}
    \label{l:lyapunov_continuity}
    { 	Under the assumptions of Theorem~\ref{thm:existence_unicite_lyapunov}, let $t \in [0,T] \to \Psi_t$ be such that   \eqref{def:infinite_dim_lyapunov1}  holds, 
    with  \eqref{eq:estimatelyap1}, \eqref{eq:estimatelyap2} and \eqref{eq:estimatelyap3}.  Then,   $\Psi \in \mathcal C ([0,T],L^1(\mu_1\otimes \mu_2))$.}
\end{lemma}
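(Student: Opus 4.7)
The goal is to show that $\Psi_t \to \Psi_{t'}$ in $L^1(\mu_1 \otimes \mu_2)$ as $t' \to t$. The key preliminary observation is that, exactly as in Remark \ref{R:Linfty}, assumption \eqref{assumption:lyapunov_ODE} together with the three estimates \eqref{eq:estimatelyap1}--\eqref{eq:estimatelyap3} yields a uniform bound
\[
\sup_{s \in [0,T]} |F(s,\Psi_s)(\theta,\tau)| \;\leq\; C, \qquad \mu_1 \otimes \mu_2\text{-a.e.},
\]
for some constant $C$ depending only on $\kappa$, $\kappa'$, and $\sup_{s\leq T}\|\Psi_s\|_{L^1(\mu_1\otimes\mu_2)}$. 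This is what allows us to avoid any further control on $\Psi$ and reduces the whole problem to a question about the propagator $e^{-(\theta+\tau)(s-t)}$.

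Fix $0 \leq t \leq t' \leq T$. Using the mild formulation \eqref{def:infinite_dim_lyapunov1}, write
\[
\Psi_t(\theta,\tau) - \Psi_{t'}(\theta,\tau) \;=\; \int_t^{t'} e^{-(\theta+\tau)(s-t)} F(s,\Psi_s)(\theta,\tau)\, ds + \int_{t'}^T \!\! \bigl(e^{-(\theta+\tau)(s-t)} - e^{-(\theta+\tau)(s-t')}\bigr) F(s,\Psi_s)(\theta,\tau)\, ds.
\]
Integrating the absolute value against $|\mu_1| \otimes |\mu_2|$ and applying Tonelli, the first term is dominated by
\[
C \int_t^{t'} \bar K_1(s-t) \bar K_2(s-t)\, ds,
\]
which tends to zero as $t' \to t$ since $\bar K_1 \bar K_2 \in L^1([0,T])$ by Cauchy--Schwarz and \cite[Lemma A.1]{abietal19a}. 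For the second term, since $s \geq t' \geq t$, one has the domination
\[
\bigl| e^{-(\theta+\tau)(s-t)} - e^{-(\theta+\tau)(s-t')} \bigr| \;\leq\; 2\, e^{-(\theta+\tau)(s-t')},
\]
and the dominating function $2C\, e^{-(\theta+\tau)(s-t')}$ is integrable on $[t',T] \times \R_+^2$ against $ds \otimes |\mu_1| \otimes |\mu_2|$, with bound $2C\,\|\bar K_1\|_{L^2(0,T)}\|\bar K_2\|_{L^2(0,T)}$. Pointwise, the integrand vanishes as $t' \to t$, so the dominated convergence theorem yields that the second term tends to zero as well.

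Combining the two estimates gives $\|\Psi_t - \Psi_{t'}\|_{L^1(\mu_1\otimes \mu_2)} \to 0$ as $t' \downarrow t$; the case $t' \uparrow t$ is symmetric (exchange the roles of $t$ and $t'$ above). Hence $\Psi \in C([0,T], L^1(\mu_1 \otimes \mu_2))$. The only mildly delicate point is producing the dominating function for the second integral; this works because the translation $e^{-(\theta+\tau)(s-t)}$ is decreasing in $s-t$, so moving $t$ upward only makes the exponential larger, and the resulting majorant is controlled by the $L^2$ norms of $\bar K_1$ and $\bar K_2$.
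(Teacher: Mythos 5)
Your preliminary uniform bound on $F(s,\Psi_s)$ and your splitting of $\Psi_t-\Psi_{t'}$ into the two integrals are exactly the paper's, and your treatment of the first integral is correct. The gap is in the second integral: you apply the dominated convergence theorem with the majorant $2C\,e^{-(\theta+\tau)(s-t')}$, which depends on the very parameter $t'$ that is being sent to $t$. The standard dominated convergence theorem needs one integrable majorant valid for all $t'$ in a neighbourhood of $t$, and in this setting none exists once $|\mu_1|$ or $|\mu_2|$ is an infinite measure (which the paper allows, e.g. the fractional case $|\mu|(d\theta)=c_H\theta^{-H-1/2}d\theta$). Indeed, for $s$ in the slab $(t,t+\delta]$ the pointwise supremum over $t'\in(t,t+\delta]$ of $\ind_{\{s\ge t'\}}\bigl|e^{-(\theta+\tau)(s-t)}-e^{-(\theta+\tau)(s-t')}\bigr|$ equals $1-e^{-(\theta+\tau)(s-t)}$, whose integral against $ds\otimes|\mu_1|\otimes|\mu_2|$ over $(t,t+\delta]\times\R_+^2$ is infinite whenever one of the measures has infinite total mass. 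So the step ``DCT with a moving majorant'' is not justified as written; this is precisely the point where the paper argues differently.

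The fix is cheap, but it has to be said. Either upgrade to the generalized dominated convergence (Pratt) lemma: your majorants $g_{t'}(s,\theta,\tau)=2C\ind_{\{s\ge t'\}}e^{-(\theta+\tau)(s-t')}$ converge pointwise to $g_t$ as $t'\to t$ and, by Tonelli, $\int g_{t'}=2C\int_0^{T-t'}\bar K_1(u)\bar K_2(u)\,du\to\int g_t<\infty$, which legitimizes the passage to the limit. Or avoid limits of integrands altogether, as the paper does: since the difference of exponentials has a fixed sign, Tonelli bounds the $L^1(\mu_1\otimes\mu_2)$-norm of the second integral by $c\int_{t'}^{T}\bigl(\bar K_1(s-t')\bar K_2(s-t')-\bar K_1(s-t)\bar K_2(s-t)\bigr)\,ds$, which the paper controls by splitting the product and using the $L^2$-continuity of translations of $\bar K_1,\bar K_2$ (its \eqref{L2cont}); a change of variables even shows this bound is at most $c\int_0^{t'-t}\bar K_1(u)\bar K_2(u)\,du\to 0$ by absolute continuity of the integral of $\bar K_1\bar K_2\in L^1(0,T)$.
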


\begin{proof}
	We first observe that by virtue of the boundedness of the coefficients \eqref{assumption:lyapunov_ODE} and the estimates \eqref{eq:estimatelyap1}, \eqref{eq:estimatelyap2} and \eqref{eq:estimatelyap3}, we have 
	  \begin{align}\label{eq:temp_boundF}
	  |F(s,\Psi_s)(\theta,\tau)| \leq  c, \quad dt \otimes \mu_1 \otimes \mu_2-a.e.
	  \end{align}
	for some constant $c$, where $F$ is given by \eqref{def:infinite_dim_lyapunov2}. 
Fix $s\leq t\leq T$. Using \eqref{def:infinite_dim_lyapunov1}, we write 
\begin{align*}
\Psi_s(\theta, \tau) - \Psi_t(\theta, \tau) &= \int_s^t e^{-(\theta + \tau)(u-s)} F(u,\Psi_u)(\theta,\tau)ds\\
&\quad +   \int_t^T \left( e^{-(\theta + \tau)(u-s)} - e^{-(\theta + \tau)(u-t)}\right) F(u,\Psi_u)(\theta,\tau)ds \\
&= \textbf{I}_{s,t}(\theta,\tau) + \textbf{II}_{s,t}(\theta,\tau)
\end{align*}
	$\mu_1 \otimes \mu_2$--a.e.  Integrating over the $\theta$ and $\tau$ variables and successive applications of Tonelli's theorem and Cauchy--Schwarz inequality  together with the bound \eqref{eq:temp_boundF} lead to  
	\begin{align*}
	\|\textbf{I}_{s,t}\|_{L^1(\mu_1\otimes \mu_2)} &\leq c \int_s^t \bar K_1(u-s) \bar K_2(u-s) du \\
	&\leq c  \| \bar K_1\|_{L^2(0,t-s)}  \| \bar K_2\|_{L^2(0,t-s)}.
	\end{align*}
 By virtue of the square integrability of $\bar K_1$ and $\bar K_2$, the right hand side goes to $0$ as $s \uparrow  t$. Similarly, using also that  $e^{-(\theta + \tau)(u-s)} \leq e^{-(\theta + \tau)(u-t)}$, we get 
 \begin{align*}
 \|\textbf{II}_{s,t}\|_{L^1(\mu_1\otimes \mu_2)} &\leq c \int_t^T \left( \bar K_1(u-t) \bar K_2(u-t) - \bar K_1(u-s)\bar K_2(u-s)\right) du\\
 &=   c \int_0^{T-t} \left( \bar K_1(u)\bar K_2(u) - \bar K_1(t-s+u)\bar K_2(t-s+u)\right) du\\
 &=   c \int_0^{T-t} \left( \bar K_1(u)-\bar K_1(t-s+u) \right)\bar K_2(u) du \\
 &\quad +      c \int_0^{T-t} \bar K_1(t-s+u)\left(  \bar K_2(u)- \bar K_2(t-s+u)\right) du\\
 &=c(\textbf{1}_{s,t}  + \textbf{2}_{s,t}).
 \end{align*}
 The right hand side goes also to $0$ as $s\uparrow t$. To see this, an application of Cauchy--Schwarz inequality gives 
 $$ \textbf{1}_{s,t}  \leq  \left( \int_0^T  \left( \bar K_1 (u)- \bar K_1(t-s+u) \right)^2 du \right)^{1/2} \| \bar K_2\|_{L^2(0,T)}.$$
 	Since $\bar K_1$ is an element of $L^2$, it follows from \cite[Lemma 4.3]{brezis2010functional} that 
 \begin{equation}\label{L2cont}
 \lim_{h \to 0}\int_0^T |\bar K_1(u+h)-\bar K_1(u)|^2 du = 0,
 \end{equation}
 showing that $\textbf{1}_{s,t} $ converges to $0$ as $s$ goes to $t$. Interchanging the roles of $\bar K_1$ and $\bar K_2$, we also get the convergence $\textbf{2}_{s,t} \to 0$ as $s\uparrow  t$. Combining the above leads to 
 $$ \| \Psi_s -\Psi_t\|_{L^1(\mu_1\otimes \mu_2)} \to 0, \quad \mbox{as } s \uparrow  t.$$
 Similarly, we get the same conclusion when $s\downarrow t$, and the proof is complete.
\end{proof}

\section{Solvability of the Riccati equation} \label{secRiccati}

The main goal of this section  is to prove Theorem \ref{T:Riccatiexistence}, i.e.,  the existence and uniqueness of a function $\Gamma$ $\in$  $C([0,T], L^1(\mu \otimes \mu))$ satisfying the  kernel Riccati equation \eqref{eq:Riccati_monotone_kernel_mild} (recall Definition \ref{D:nonnegative}),  and  the  estimate \eqref{eq:estimateRiccati}. 
This is obtained by  adapting the  technique  used  in classical linear-quadratic control  theory \citep[Theorem 6.7.2]{yong1999stochastic} to our setting with the following steps: 
 \begin{enumerate}
 \item  Construct a sequence of Lyapunov solutions $(\Gamma^i)_{i\geq 0}$  by successive iterations,
 \item  Establish the convergence of  $(\Gamma^i)_{i\geq 0}$  in $L^1(\mu\otimes \mu)$,
 \item  Prove that the limiting point is a solution to the Riccati equation \eqref{eq:Riccati_monotone_kernel_mild},
 \item  Derive  the  estimate \eqref{eq:estimateRiccati}, the continuity and the uniqueness for the Riccati solution.
 \end{enumerate}
 
  

\subsection{Step 1: Construction of a sequence of Lyapunov solutions $(\Gamma^i_t)_{i\geq 0}$}


\begin{lemma}
	\label{lemma:positive_lyapunov}
 Let $\Psi \in C([0,T],L^1(\mu \otimes \mu))$ denote the solution to the Lyapunov equation \eqref{def:infinite_dim_lyapunov1}  produced by Theorem \ref{thm:existence_unicite_lyapunov} for the configuration 
 \begin{equation}
    \label{assumption:symmetriclyap}
    \left\{
    \begin{array}{lc} 
    d_{11}=d_{21}=d,\quad d_{12}=d_{22}=d', & \quad  \mu_1 = \mu_2 = \mu,  \\ 
    \tilde B^1 = \tilde B^2 = \tilde B, &\quad  \quad  \tilde D^1 = \tilde D^2 = \tilde D.
    \end{array}
    \right.
\end{equation} 
and under the condition 
 \begin{equation}  \label{assumption:lyapunov_ODE_coeff} 
 \left\{
 \begin{array}{rcl}
    |\tilde Q_s(\theta,\tau)| & \leq &  \kappa, \quad dt\otimes \mu \otimes \mu-a.e. \\
    |\tilde B_s(\theta)| + |\tilde D_s(\theta)|  &\leq &  \kappa, \quad dt\otimes \mu-a.e. 
 \end{array}
 \right.
 \end{equation}
If {$\tilde Q_t \in \S^d_+(\mu \otimes \mu)$} for all $t \leq T$, then
\begin{enumerate}
	\item \label{L:symlyapi}
	  $t \mapsto \Psi_t $ is a non-increasing $\S^d_+(\mu \otimes \mu)$-valued function w.r.t  the order relation $\succeq_{\mu}$. 
	  \item  \label{L:symlyapii}
	  $t \mapsto \int_{\R_+^2}  \mu(d\theta)^\T \Psi_t(\theta, \tau) \mu(d\tau)$ is a non-increasing $\S^d_+$-valued function on $[0,T]$. 
\end{enumerate}
\end{lemma}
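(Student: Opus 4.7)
The plan is to (a) obtain the kernel symmetry of $\Psi_t$ by a uniqueness argument, (b) derive its nonnegativity together with the time-monotonicity from a sandwich representation of the operator form of \eqref{def:infinite_dim_lyapunov1}--\eqref{def:infinite_dim_lyapunov2}, and (c) deduce (ii) from (i) by approximating constants in $L^1(\mu)$. For (a), I set $\tilde\Psi_t(\theta,\tau) := \Psi_t(\tau,\theta)^\top$ and check directly that the symmetric configuration \eqref{assumption:symmetriclyap} together with the symmetry of $\tilde Q_s$ implied by $\tilde Q_s \in \S^d_+(\mu\otimes\mu)$ yields the identity $F(s,\Psi_s)(\tau,\theta)^\top = F(s,\tilde\Psi_s)(\theta,\tau)$; hence $\tilde\Psi$ also solves the mild equation, and the uniqueness in Theorem~\ref{thm:existence_unicite_lyapunov} forces $\tilde\Psi = \Psi$.

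For (b), I rewrite the equation at the operator level, viewing the integral operator $\boldsymbol\Psi_t$ as the solution of
\[
\dot{\boldsymbol\Psi}_t = -L_t^*\boldsymbol\Psi_t - \boldsymbol\Psi_t L_t - \tilde{\boldsymbol Q}_t - \tilde{\boldsymbol D}_t^*\boldsymbol\Psi_t\tilde{\boldsymbol D}_t, \qquad \boldsymbol\Psi_T = 0,
\]
where $L_t = A^{mr} + \tilde{\boldsymbol B}_t$. Introducing the two-parameter evolution family $U_{s,t}$ generated by $L$, the sandwich representation
\[
\boldsymbol\Psi_t = \int_t^T U_{s,t}^*\bigl(\tilde{\boldsymbol Q}_s + \tilde{\boldsymbol D}_s^*\boldsymbol\Psi_s\tilde{\boldsymbol D}_s\bigr)U_{s,t}\, ds
\]
then holds. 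Iterating this relation from $\boldsymbol\Psi^{(0)} = 0$ produces a sequence $(\boldsymbol\Psi^{(n)})$ that preserves nonnegativity in the dual pairing sense at each step (since $\tilde{\boldsymbol Q}_s \geq 0$ and the conjugation $U^*(\cdot)U$ preserves the cone of nonnegative quadratic forms), and its convergence to $\boldsymbol\Psi$, inherited from Lemma~\ref{L:lyap_ST}, transfers nonnegativity to the limit, so that $\Psi_t \in \S^d_+(\mu\otimes\mu)$. Monotonicity in $t$ follows by splitting the integration interval for $t_1 \leq t_2$ and using the propagator identity $U_{s,t_1} = U_{s,t_2}U_{t_2,t_1}$ together with the nonnegativity just established.

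Part (ii) then follows from (i). The symmetry of $M_t := \int_{\R_+^2} \mu(d\theta)^\top \Psi_t(\theta,\tau) \mu(d\tau)$ is inherited from that of $\Psi_t$. For any $v \in \R^{d'}$, condition \eqref{eq:mucond} guarantees $|\mu|([0,R]) < \infty$ for all $R > 0$, so the truncated constants $\varphi_R(\theta) := v\,\mathbf{1}_{[0,R]}(\theta)$ belong to $L^1(\mu)$; dominated convergence as $R \to \infty$ (using $\Psi_t \in L^1(\mu\otimes\mu)$) yields $v^\top M_t v = \lim_R \langle \varphi_R, \boldsymbol\Psi_t \varphi_R\rangle_\mu \geq 0$, and the same argument gives monotonicity for $M_t$. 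The main obstacle in this plan is the rigorous construction and manipulation of the evolution family $U_{s,t}$ on the non-reflexive Banach space $L^1(\mu)$, given the unbounded multiplicative generator $A^{mr}$ and the time-dependent perturbation $\tilde{\boldsymbol B}_t$; a concrete alternative is to bypass the operator-semigroup machinery and derive the sandwich representation from a Feynman--Kac formula for the Markovian lift of an associated stochastic Volterra equation, in the spirit of \cite{abietal19a}.
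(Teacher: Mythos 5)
Your part (a) and part (c) are essentially the paper's own arguments: symmetry follows from the invariance of \eqref{def:infinite_dim_lyapunov1}--\eqref{def:infinite_dim_lyapunov2} under transposition and exchange of $\theta,\tau$ together with uniqueness, and assertion (ii) is obtained by testing against truncated constants and passing to the limit using $\Psi_t\in L^1(\mu\otimes\mu)$ (the paper uses $z\,\ind_{[1/n,\infty)}$; your $v\,\ind_{[0,R]}$ serves the same purpose). The genuine gap is in part (b), which is the heart of the lemma. Everything there rests on the evolution family $U_{s,t}$ generated by $A^{mr}+\tilde{\boldsymbol B}_t$ on $L^1(\mu)$ and on the sandwich identity $\boldsymbol\Psi_t=\int_t^T U_{s,t}^*\bigl(\tilde{\boldsymbol Q}_s+\tilde{\boldsymbol D}_s^*\boldsymbol\Psi_s\tilde{\boldsymbol D}_s\bigr)U_{s,t}\,ds$, and neither is established: you do not construct $U_{s,t}$ (you flag this yourself as the main obstacle), you do not show that the mild kernel equation \eqref{def:infinite_dim_lyapunov1} --- which involves only the unperturbed semigroup $e^{-(\theta+\tau)(s-t)}$, the $\tilde B$-terms being kept in the source term $F$ --- is equivalent to the sandwich equation, and the convergence of your Picard iterates to $\boldsymbol\Psi$ is not ``inherited from Lemma~\ref{L:lyap_ST}'', since that lemma is a contraction for a different fixed-point map; the two fixed points would still have to be identified. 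Deferring all of this to ``a Feynman--Kac formula for the Markovian lift in the spirit of \cite{abietal19a}'' is deferring to precisely what the paper does: it solves the $L^1(\mu)$-valued linear SDE started from $\varphi$ at time $t$ (existence, the fourth-moment bound and the It\^o formula are quoted from \cite{abietal19a}), applies It\^o's formula to $s\mapsto\langle\widetilde Y_s,\boldsymbol\Psi_s\widetilde Y_s\rangle_\mu$, checks via \eqref{assumption:lyapunov_ODE_coeff} and \eqref{eq:estimatelyap2} that the stochastic integrals are true martingales, and obtains $\langle\varphi,\boldsymbol\Psi_t\varphi\rangle_\mu=\E\bigl[\int_t^T\langle\widetilde Y_s,\tilde{\boldsymbol Q}_s\widetilde Y_s\rangle_\mu\,ds\bigr]\ge 0$. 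As written, your proposal leaves this substantive analytic work undone.

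In addition, the monotonicity step would not close even if $U_{s,t}$ were available: for $t_1\le t_2$, splitting the integral and using $U_{s,t_1}=U_{s,t_2}U_{t_2,t_1}$ gives $\langle\varphi,\boldsymbol\Psi_{t_1}\varphi\rangle_\mu\ \ge\ \langle U_{t_2,t_1}\varphi,\ \boldsymbol\Psi_{t_2}\,U_{t_2,t_1}\varphi\rangle_\mu$, a conjugated inequality that does not imply $\langle\varphi,\boldsymbol\Psi_{t_1}\varphi\rangle_\mu\ge\langle\varphi,\boldsymbol\Psi_{t_2}\varphi\rangle_\mu$; removing the conjugation requires an additional argument, and for time-dependent $\tilde B,\tilde D,\tilde Q$ this is genuinely delicate (the same subtlety lurks behind the paper's one-line deduction of non-increasingness from its probabilistic representation, so be explicit about how you handle it). In short: keep (a) and (c), but replace or complete (b) with the stochastic-representation argument, and repair the monotonicity deduction.
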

\begin{proof}Note that under \eqref{assumption:symmetriclyap}, the Lyapunov equation  \eqref{def:infinite_dim_lyapunov1} is invariant by transposition and exchange of $\theta$ and $\tau$. By uniqueness of the solution, we deduce that $\Psi_t(\theta, \tau) = \Psi_t(\tau,\theta)^\top$, $\mu \otimes \mu$--a.e., for all $t \leq T$. 
Fix $\varphi \in { L^{1}(\mu)}$ and $t \leq T$, and consider the following equation
\bec{
d \widetilde Y_s(\theta) &= \Big(- \theta\widetilde Y_s(\theta) + \int_{\R_+} \tilde B_s(\tau) \mu(d\tau) \widetilde  Y_s(\tau) \Big) ds 
+ \Big(\int_{\R_+} \tilde D_s(\tau) \mu(d\tau)  \widetilde Y_s(\tau) \Big)dW_s \\
\widetilde Y_t(\theta) &= \varphi(\theta),
}
which admits a unique  $L^1(\mu)$--valued solution such that 
	\begin{align}\label{eq:momentY}
	\sup_{t\leq s \leq T} \E\left[ \|\widetilde Y_s \|^4_{L^1(\mu)}  \right] < \infty,
	\end{align}
	see \cite[Theorem 4.1]{abietal19a}. 
Recall the bold notation  $\boldsymbol{G}$ in \eqref{defintegral}  for the integral operator generated by a  kernel $G$.  Note that, by virtue of Remark~\ref{R:Linfty},
		  $\Psi_s \in L^{\infty}(\mu\otimes \mu)$, so that $\boldsymbol{\Psi}_s:L^1(\mu)\to L^{\infty}(\mu^\top)$, for all $s \leq T$. This shows that  $s\mapsto \langle \widetilde Y_s, \boldsymbol{ \Psi_s} \widetilde Y_s \rangle_{\mu}$ is well-defined $\P$-a.s.  An application of  
It\^o's formula  (see \cite[Lemma 5.1]{abietal19a}) to the process $s\to \langle \widetilde Y_s, \boldsymbol{ \Psi_s} \widetilde Y_s \rangle_{\mu}$
yields, due to the vanishing terminal condition for $\Psi$, and  after successive applications of Fubini's theorem: 
		\bes{
		0&=  \langle \varphi, \boldsymbol{ \Psi_t} \varphi \rangle_{\mu} -  \int_t^T \langle \widetilde Y_s, \boldsymbol{\tilde Q}_s \widetilde Y_s \rangle_{\mu} ds \\
		&\quad +  \int_t^T  \int_{\R_+}  \widetilde Y_s(\theta)^\top \mu(d\theta)^\top  \tilde D_s(\theta)^\top  \int_{\R_+^2}\mu(d\theta')^\top \Psi_s(\theta',\tau') \mu(d\tau ') 
		\widetilde Y_s(\tau ') dW_s \\
		&\quad +  \int_t^T  \int_{\R_+^2}  \widetilde Y_s(\theta ')^\top \mu(d\theta ')^\top \Psi_s(\theta ',\tau ') \mu(d\tau ') \int_{\R_+}   \tilde D_s(\tau)  \mu(d\tau) \widetilde  Y_s(\tau)  dW_s. \label{eq:postivityeq}
	}
It  is straightforward to check that the local martingales terms are in fact true martingales due to the boundedness conditions \eqref{assumption:lyapunov_ODE_coeff}, \eqref{eq:estimatelyap2} and  the moment bound \eqref{eq:momentY}. Thus,  taking the expectation on both sides of \eqref{eq:postivityeq} yields that
	\bes{
		 \langle \varphi, \boldsymbol{\Psi_t} \varphi \rangle_{\mu} = \E \left[ \int_t^T \langle \widetilde Y_s,  \boldsymbol{\tilde Q}_s \widetilde Y_s \rangle_{\mu} ds\right],
    }
    which  ensures the positiveness and the non increasingness of $t  \mapsto \langle \varphi , \boldsymbol{\Psi}_t \varphi \rangle_{\mu}$ for any $\varphi \in L^1(\mu)$, since 
    $s\to \tilde Q_s $ is $\mathbb S^{d}_+(\mu \otimes \mu)$-valued. {This proves Assertion~\ref{L:symlyapi}.}  
Next, by considering the sequence of $L^1(\mu)$-valued functions {$(\varphi^n(\theta)=z \ind_{[1/n,\infty)}(\theta))_{n\geq 1}$ for arbitrary $z \in \R^{d'}\setminus\{0\}$}, 
using that $\Psi_t \in L^1(\mu \otimes \mu)$, and taking the limit, we obtain that $t \mapsto \int_{\R_+^2} \mu(d\theta)^\T \Psi_t(\theta, \tau) \mu(d\tau)$ is a non increasing $\mathbb S_+^{d}$-valued function. This proves Assertion~\ref{L:symlyapii} and concludes the proof. 
\end{proof}

\vspace{1mm}

From now on, we work under assumption \eqref{assumption:QN}.  We construct   a sequence of Lyapunov solutions $(\Gamma^i)_{i\geq 0}$ by induction as follows.

 \vspace{1mm}

\noindent $\bullet$ \textit{Initialization:} Let $\Gamma^0\in C([0,T],L^1(\mu\otimes \mu))$ be the unique solution given by Theorem~\ref{thm:existence_unicite_lyapunov} 
 to the following Lyapunov equation
		\bec{
	\Gamma^0_t(\theta, \tau) &=  \int_t^T e^{-(\theta + \tau)(s-t)} F_0(\Gamma^0_s)(\theta,\tau)ds,
 \\
	F_0(\Gamma)(\theta, \tau)&=Q+ D^\T \int_{\R_+^2} { \mu(d\theta')^\T \Gamma(\theta',\tau ') \mu(d\tau ')} D \\
	&\quad  + B^\top \int_{\R_+} \mu(d\theta')^\T \Gamma(\theta',\tau)  + \int_{\R_+} \Gamma(\theta,\tau ') \mu(d\tau ') B.
		\label{eq:lyapunov_0}
}
Since $Q\in \S^d_+$, an application of Lemma  \ref{lemma:positive_lyapunov}-\ref{L:symlyapii} yields that 
$F^\top \int_{\R_+^2} \mu(d\theta')^\top \Gamma^0_t(\theta',\tau ') \mu(d\tau ') F \in \S^m_+$, for all $t\leq T$. 
Combined with the assumption $N - \lambda I_m \in \S^m_+$, we obtain 
\bes{\label{eq:condinvN}
N + F^\top \int_{\R_+^2} \mu(d\theta')^\top \Gamma^0_t(\theta',\tau ') \mu(d\tau ')   F - \lambda I_m \in \S^m_+,\quad  t \leq T.
}
	
\vspace{1mm}	
	
\noindent $\bullet$ \textit{Induction}: for  $i \in \N$, having constructed  $\Gamma^i \in  C([0,T],L^1(\mu\otimes \mu))$  such that 
\begin{align}\label{eq:gammaisd}
N + F^\top \int_{\R_+^2} \mu(d\theta')^\top \Gamma^i_t(\theta',\tau ') \mu(d\tau ')   F - \lambda I_m \in \S^m_+,\quad  t \leq T.
\end{align}
and 
\begin{align}\label{eq:tempestimatelyap3}
	\int_{\R_+} |\mu|(d\tau)|\Gamma^i_t(\theta,\tau)| \leq \kappa_i',\quad \mu-a.e., \quad t\leq T,
\end{align}
for some $\kappa_i'>0$, we  define 
\begin{align}
\Theta^i_t(\tau) =& -\left(N + F^\T \int_{\R_+^2} \mu(d\theta')^\T \Gamma^i_t(\theta',\tau ') \mu(d\tau ')   F\right)^{-1} \label{deftheta} \\
& \quad\quad   \quad \quad  \times \left(F^\T \int_{\R_+^2} \mu(d\theta')^\T \Gamma^i_t(\theta',\tau ') \mu(d\tau ')  D +C^\T \int_{\R_+} \mu(d\theta') \Gamma^i_t(\theta',\tau) \right ), \nonumber
\end{align}
 together with the coefficients 
\begin{align}
    \label{coeff_recu_lyapu}
    \tilde Q^i_t(\theta,\tau)=Q+ \Theta^i_t(\theta)^\T N \Theta^i_t(\tau), \quad \tilde B^i_t(\tau) = B + C \Theta^i_t(\tau), \quad \tilde D^i_t(\tau) = D + F \Theta^i_t(\tau).
\end{align}
	Since $\Gamma^i \in C([0,T],L^1(\mu\otimes \mu))$, we have 
	$$ \sup_{t\leq T} \|\Gamma^{i}_t\|_{L^1(\mu \otimes \mu)}< \infty.$$
	Combined with the estimate \eqref{eq:tempestimatelyap3}, this yields the existence of $c_i>0$ such that 
	$$  \Theta^i_t(\theta) \leq c_i, \quad \mu-a.e., \quad t \leq T.$$
This implies that the coefficients {$\tilde Q^i,\tilde B^i,\tilde D^i$} satisfy \eqref{assumption:lyapunov_ODE_coeff}. Therefore,  Theorem~\ref{thm:existence_unicite_lyapunov} can be applied to get the existence of a unique solution $\Gamma^{i+1}\in C([0,T],L^1(\mu\otimes \mu))$   to the following Lyapunov equation
	\bec{
	    \label{eq:recu_gamma_i}
    	\Gamma^{i+1}_t(\theta, \tau) &=  \int_t^T e^{-(\theta + \tau)(s-t)} F_i(s,\Gamma^{i+1}_s)(\theta,\tau)ds,
    	\\
    	F_i(s,\Gamma)(\theta, \tau)&= \tilde Q^i_s(\theta,\tau)+ \tilde D_t^i(\theta)^\T \int_{\R_+^2} { \mu(d\theta')^\T \Gamma(\theta',\tau ') \mu(d\tau ')} \tilde D_t^i(\tau) \\
    	&\quad  + \tilde B_t^i(\theta)^\top \int_{\R_+} \mu(d\theta')^\T \Gamma(\theta',\tau)  + \int_{\R_+} \Gamma(\theta,\tau ') \mu(d\tau ') \tilde B_t^i(\tau),
    }
such that the estimate \eqref{eq:tempestimatelyap3} holds also for $\Gamma^{i+1}$. Furthermore, since $\tilde Q^i_t$  clearly lies in $\S^d_{+}(\mu\otimes \mu)$, for all $t\leq T$,   Lemma \ref{lemma:positive_lyapunov}-\ref{L:symlyapii} yields that \eqref{eq:gammaisd} is satisfied with $\Gamma^i_t$  replaced by $\Gamma^{i+1}_t$, for all $t\leq T$. 
This ensures that the induction is well-defined. 

\subsection{Step 2: Convergence of $(\Gamma^i_t)_{i\geq 0}$ in $L^1(\mu\otimes \mu)$}

\begin{lemma}\label{lemma:simple_convergence_lyapu} For $i\in \N$ and for a  scalar function $\xi \in L^{\infty}(|\mu|)$  define the matrix-valued functions 
	$$\Uc^i = \int_{\R_+^2} \mu(d\theta)^\T \Gamma^i(\theta,\tau)\mu(d\tau), \quad \Vc^i(\xi) = \int_{\R_+^2} \mu(d\theta)^\T \Gamma^i(\theta,\tau)\mu(d\tau) \xi(\tau).$$
Then,
	\begin{enumerate} 
		\item \label{Lconvi} $\left(\Uc^i\right)_{i\geq 0}$  is a non-increasing sequence (meaning $\Uc^{i+1}_t \leq \Uc^i_t$,  $i\in \N$ and $t\leq T$) of monotone non-increasing functions on the space $C([0,T],\S^d_+)$,  converging pointwise to a limit denoted by $\Uc$; 
		\item  \label{Lconvii} $\left(\Vc^i(\xi)\right)_{i\geq 0}$ is a uniformly bounded sequence of functions on the space  $C([0,T],{\R^{d'\times d'}})$,  
		converging pointwise to a limit denoted by $\Vc(\xi)$, for any scalar function $\xi \in L^{\infty}(|\mu|)$.
	\end{enumerate}
\end{lemma}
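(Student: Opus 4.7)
The plan is to extract both assertions from the single key monotonicity relation
\begin{align*}
\Gamma^i_t \succeq_\mu \Gamma^{i+1}_t, \quad t \leq T, \quad i \geq 0,
\end{align*}
which encodes the classical fact that Howard-type policy iteration produces a non-increasing cost sequence. To prove it, for a generic measurable feedback $\Theta$ let $F^\Theta(\Gamma)$ denote the Lyapunov right-hand side in \eqref{eq:recu_gamma_i} with $\Theta$ in place of $\Theta^i$; a direct expansion and completion of squares yield the identity
\begin{align*}
F^\Theta(\Gamma)(\theta,\tau) - \mathcal{R}(\Gamma)(\theta,\tau) = (\Theta(\theta) - \Theta^*(\Gamma)(\theta))^\top \hat{N}(\Gamma)(\Theta(\tau) - \Theta^*(\Gamma)(\tau)),
\end{align*}
where $\Theta^*(\Gamma) := -\hat{N}(\Gamma)^{-1} S(\Gamma)$, so that $\Theta^i = \Theta^*(\Gamma^i)$ by \eqref{deftheta}. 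Subtracting the Lyapunov equations satisfied by $\Gamma^i$ and $\Gamma^{i+1}$ and rearranging as $F^{\Theta^{i-1}}(\Gamma^i) - F^{\Theta^i}(\Gamma^{i+1}) = [F^{\Theta^{i-1}}(\Gamma^i) - F^{\Theta^i}(\Gamma^i)] + [F^{\Theta^i}(\Gamma^i) - F^{\Theta^i}(\Gamma^{i+1})]$ shows that $\Delta^i := \Gamma^i - \Gamma^{i+1}$ solves a Lyapunov equation of the form \eqref{def:infinite_dim_lyapunov1}--\eqref{def:infinite_dim_lyapunov2} with coefficients $(\tilde B^i, \tilde D^i)$ from \eqref{coeff_recu_lyapu} and source $\tilde Q_s(\theta, \tau) = (\Theta^{i-1}_s(\theta) - \Theta^i_s(\theta))^\top \hat{N}(\Gamma^i_s)(\Theta^{i-1}_s(\tau) - \Theta^i_s(\tau))$, with the convention $\Theta^{-1} \equiv 0$ for the base case $i = 0$. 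Since $\hat{N}(\Gamma^i_s) \succeq \lambda I_m$ by \eqref{eq:gammaisd}, this rank-one factorization integrates to $\zeta^\top \hat N \zeta \geq 0$ for any test function in $L^1(\mu)$, hence $\tilde Q_s \in \S^d_+(\mu\otimes\mu)$, and Lemma~\ref{lemma:positive_lyapunov}(i) yields $\Delta^i_t \in \S^d_+(\mu\otimes \mu)$.

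Assertion~(i) then follows immediately. Lemma~\ref{lemma:positive_lyapunov}(ii) gives that each $\Uc^i$ is $\S^d_+$-valued and non-increasing on $[0,T]$; continuity in $t$ follows from $\Gamma^i \in C([0,T], L^1(\mu\otimes\mu))$ together with $|\Uc^i_t - \Uc^i_s| \leq \|\Gamma^i_t - \Gamma^i_s\|_{L^1(\mu\otimes\mu)}$; and integrating $\Delta^i_t \succeq_\mu 0$ against $\mu \otimes \mu$ gives $\Uc^{i+1}_t \preceq \Uc^i_t$ in the PSD order, so $(\Uc^i_t)_{i \geq 0}$ is a non-increasing sequence in $\S^d_+$ bounded below by $0$ and converges pointwise to some $\Uc_t \in \S^d_+$.

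For assertion~(ii) the crucial step is to extend the symmetric nonnegative bilinear form $B^i_t(\varphi, \psi) := \langle \varphi, \boldsymbol{\Gamma}^i_t \psi \rangle_\mu$ from $L^1(\mu) \times L^1(\mu)$ to $L^\infty(\mu) \times L^\infty(\mu)$ by truncation: for $\eta \in L^\infty(\mu)$ and $B_n \uparrow \R_+$ with $|\mu|(B_n) < \infty$, the dominated convergence theorem with integrable majorant $|\eta|_\infty^2 \,|\mu|\,|\Gamma^i_t|\,|\mu|$ yields $\langle \eta \ind_{B_n}, \boldsymbol{\Gamma}^i_t \eta \ind_{B_n}\rangle_\mu \to \langle \eta, \boldsymbol{\Gamma}^i_t \eta\rangle_\mu \geq 0$, and the same DCT applied to $\Gamma^i_t - \Gamma^{i+1}_t \in \S^d_+(\mu\otimes\mu)$ shows the extended form is symmetric, nonnegative, and non-increasing in $i$. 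Since $u^\top \Vc^i(\xi)_t v = B^i_t(u, v\xi)$ for $u, v \in \R^{d'}$ and $\xi \in L^\infty(|\mu|)$, Cauchy--Schwarz combined with $\Gamma^i_t \preceq_\mu \Gamma^0_t$ gives
\begin{align*}
|u^\top \Vc^i(\xi)_t v|^2 \leq (u^\top \Uc^i_t u)\,\langle v\xi, \boldsymbol{\Gamma}^i_t v\xi\rangle_\mu \leq (u^\top \Uc^0_t u)\,\langle v\xi, \boldsymbol{\Gamma}^0_t v\xi\rangle_\mu,
\end{align*}
whose right-hand side is uniformly bounded in $(i,t)$ by $|u|^2|v|^2|\xi|_\infty^2 \sup_{s\leq T}\|\Uc^0_s\|\,\|\Gamma^0_s\|_{L^1(\mu\otimes\mu)}$; continuity of $\Vc^i(\xi)$ in $t$ is inherited from that of $\Gamma^i$ in $L^1(\mu\otimes\mu)$. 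Convergence then follows from the polarization identity $4B^i_t(u, v\xi) = B^i_t(u + v\xi, u + v\xi) - B^i_t(u - v\xi, u - v\xi)$: each quadratic term is non-increasing in $i$ and nonnegative, hence converges, so does $u^\top \Vc^i(\xi)_t v$ entry-wise. The main obstacle is precisely this convergence step, since unlike the diagonal $\Uc^i$ the matrix $\Vc^i(\xi)$ has no \emph{a priori} sign or monotonicity; the maneuver of recognizing it as an off-diagonal value of an $L^\infty$-extended bilinear form is what reduces both the uniform bound and the convergence to well-understood facts about the diagonal quadratic form.
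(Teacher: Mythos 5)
Your proof is correct and follows essentially the same route as the paper: you derive the Lyapunov equation for $\Delta^i=\Gamma^i-\Gamma^{i+1}$ with nonnegative source $\rho^{i}(\theta)^\top \hat N(\Gamma^i)\rho^{i}(\tau)$ (the content of Lemma~\ref{l:eq_gamma_i_recu} and Remark~\ref{R:deltai}, which you re-obtain by completing the square), apply Lemma~\ref{lemma:positive_lyapunov}, and pass from $L^1(\mu)$ to bounded/constant test functions by truncation and dominated convergence, which is the paper's density argument in disguise. The only cosmetic difference is the final convergence step for $\Vc^i(\xi)$: you use polarization of the extended bilinear form together with monotonicity of the diagonal terms, whereas the paper applies the Cauchy--Schwarz inequality of Lemma~\ref{l:CS} to the nonnegative kernels $\Gamma^i_t-\Gamma^j_t$ to exhibit a Cauchy sequence --- equivalent devices.
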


\begin{proof}  Throughout the proof  we consider the intermediate scalar sequences
	\begin{align}\label{eq:Utif}
	{U_t^i(\varphi) = \langle \boldsymbol{\Gamma}^i_t \varphi , \varphi \rangle_{\mu^\top} }
	\quad \mbox{and}  \quad    V_t^i(\varphi, \psi) =   \langle  \boldsymbol{\Gamma}^i_t \varphi , \psi \rangle_{\mu^\top},  \quad t \leq T,  \quad i \in \N, \quad \varphi,\psi \in L^{\infty}(\mu),
	\end{align}
	which are well-defined since $\Gamma_t^i \in L^{1}(\mu\otimes \mu)$ for all $t\leq T$. We also Set $\Theta^{-1}=0$, and for  $i \geq 0$, and define
	\bes{
		\Delta^i = \Gamma^{i} - \Gamma^{i+1}, \;\;\;\;\;\; \rho^i = \Theta^{i-1} - \Theta^{i},
	}
	where we recall that $\Theta^i$ is given by \eqref{deftheta}.  	
	Straightforward computations, detailed in Lemma \ref{l:eq_gamma_i_recu} and Remark~\ref{R:deltai}, yield that $\Delta^i$ solves the Lyapunov equation:
	\bec{
		\Delta^{i}_t(\theta, \tau) &=  \; \int_t^T e^{-(\theta + \tau)(s-t)} F^{\delta}_i(s,\Delta^{i}_s)(\theta,\tau)ds,
		\\
		F^{\delta}_i(t,\Delta)(\theta, \tau)&= \; \tilde Q^{i,\delta}_t{(\theta,\tau)}+ \tilde D_t^i(\theta)^\T \int_{\R_+^2} { \mu(d\theta')^\T \Delta(\theta',\tau ') \mu(d\tau ')} \tilde D_t^i(\tau) \\
		&\quad  + \tilde B_t^i(\theta)^\top \int_{\R_+} \mu(d\theta')^\T \Delta(\theta',\tau)  + \int_{\R_+} \Delta(\theta,\tau ') \mu(d\tau ') \tilde B_t^i(\tau), \label{eq:Deltai}
	}
	where 
	$$ \tilde Q^{i,\delta}_t (\theta,\tau)=\rho^i_t(\theta)^\T \left(N + F^\T \int_{\R_+^2} \mu(d\theta')^\T \Gamma^i_t(\theta',\tau ')\mu(d\tau ') F\right)\rho^i_t(\tau).$$
\noindent  $\bullet$  
Fix $i\in \N$. Since $ \tilde Q^{i,\delta}_t \in \S^d_+(\mu \otimes \mu)$,  an application of Lemma  \ref{lemma:positive_lyapunov}-\ref{L:symlyapi}   on \eqref{eq:Deltai} shows that  $t \mapsto \Delta^i_t $ is a non-increasing $\S^d_+(\mu \otimes \mu)$-valued function. Thus,  for any  $\varphi\in L^{1}(\mu)$, 
	\bes{
	\langle  \varphi, \boldsymbol{\Gamma}^0_0 \varphi\rangle_{\mu} \geq 	\langle  \varphi, \boldsymbol{\Gamma}^0_t \varphi\rangle_{\mu}  \geq \langle  \varphi, \boldsymbol{\Gamma}^i_t \varphi\rangle \geq \langle  \varphi, \boldsymbol{\Gamma}^{i+1}_t \varphi\rangle_{\mu} \geq  0, \quad  t \leq T, \quad  i \in \N,
		\label{eq:decreasing_seq}
}
Since for all $t\leq T$, $\Gamma_t^i$ is also an element of  $L^{1}(\mu\otimes \mu)$,  the density of simple functions in $L^\infty(\mu)$ with respect to the uniform norm,  implies that 
		\bes{\label{eq:Udecrease}
			0  &  \leq U^{i+1}_t(\varphi)  \leq U^i_t(\varphi) \leq U^0_t(\varphi) \leq U^0_0(\varphi).
}
for all $\varphi \in L^{\infty}(\mu)$.  This implies that the sequence of functions $( U^i(\varphi))_{i\geq 0}$ is non-increasing, nonnegative and converging pointwise to a limit that we denote by 
	$U_t(\varphi)$ for any $t\in [0,T]$. Furthermore,  $t\mapsto U_t^i(\varphi)$ is continuous, for all  $i$ $\in$ $\N$ and    $\varphi \in L^\infty(\mu)$, thanks to  the continuity of 
	$t\to \Gamma_t$  in $L^1(\mu \otimes \mu)$, see Lemma \ref{l:lyapunov_continuity}. The claimed statement \ref{Lconvi} for $\mathcal U$ now follows by evaluating with $\varphi(\theta)\equiv z$, where $z$ ranges through $\R^{d'}$.	
	
\noindent $\bullet$  Since $\Gamma^i_t - \Gamma^j_t \in \S^d_+(\mu\otimes \mu)$ for any $i \leq j$, an application of  the Cauchy-Schwartz inequality (see Lemma \ref{l:CS}) yields 
	\bes{\label{eq:CS1}
		\langle \varphi, \left( \bold{\Gamma}^i_t -\bold{\Gamma}^j_t \right) \psi \rangle_{\mu}^2   &\leq  \langle \varphi, \left( \bold{\Gamma}^i_t -\bold{\Gamma}^j_t \right) \varphi \rangle_{\mu} \, \langle \psi, \left( \bold{\Gamma}^i_t -\bold{\Gamma}^j_t \right) \psi \rangle_{\mu}, \quad \varphi,\psi \in L^1(\mu).}
	Invoking once again   the density of simple functions in $L^\infty(\mu)$ with respect to the uniform norm and the fact that for all $t\leq T$, $\Gamma_t \in L^{1}(\mu\otimes \mu)$, \eqref{eq:CS1}  gives
		\bes{
	\left( V^i_t(\varphi,\psi) -V^j_t(\varphi,\psi) \right)^2	\leq  \left(U^i_t(\varphi)- U^j_t(\varphi) \right)\left(U^i_t(\psi)- U^j_t(\psi)\right), \quad \varphi,\psi \in L^{\infty}(\mu).}
Whence, the sequence of real valued functions $\left(t \mapsto V_t^i(\varphi,\psi)\right)_{i \geq 0}$ is uniformly bounded. Furthermore, this also shows that  the sequence 
	$\left(t \mapsto V_t^i(\varphi,\psi)\right)_{i \geq 0}$  is a real-valued  Cauchy sequence that converges pointwise to a limit that we denote by $V_t(\varphi,\psi)$, for any $\varphi,\psi \in L^{\infty}(\mu)$. 
	To obtain the continuity of $V^i(\varphi,\psi)$,  note that $\Gamma^i_t - \Gamma^i_s \in \S^d_+(\mu\otimes \mu)$ for any $t \leq s$ and $\Gamma^i_s - \Gamma^i_t \in \S^d_+$ for any $s \leq t$, 
	which allows us once again to apply the Cauchy Schwartz inequality (see Lemma \ref{l:CS}) coupled with the density argument to obtain for any $s,t \in [0,T]$: 
	\bes{
		\left( V^i_t(\varphi,\psi) -V^i_s(\varphi,\psi) \right)^2  \leq & \left(U^i_t(\varphi)- U^i_s(\varphi) \right)\left(U^i_t(\psi)- U^i_s(\psi)\right).
	}
	Consequently, the continuity of $U^i(\varphi)$ for any $\varphi \in L^\infty(\mu)$ implies that  of $V^i(\varphi,\psi)$ for any $\varphi,\psi \in L^\infty(\mu)$.   Fix $\xi \in L^{\infty}(|\mu|)$, the claimed statement \ref{Lconvii} for $\mathcal V(\xi)$ now follows by evaluating with $\varphi(\theta)\equiv z$ and $\psi(\theta)=\xi(\theta) z'$, where $z,z'$ range through $\R^{d'}$.
\end{proof}

\vspace{1mm}

\begin{lemma}
\label{lemma:simple_int_uniform_bound}
There exists a constant $\kappa>0$ such that for every $i \in \N$ and $t\in [0,T]$
    \bes{
        \label{eq:rec_hyp}
         \left| \int_{\R_+} \Gamma^{i}_t(\theta, \tau) \mu(d\tau) \right| \leq \kappa \quad  \mu-a.e.
    }
\end{lemma}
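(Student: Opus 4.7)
The plan is to bypass the implicit dependencies in the coefficients of the Lyapunov equation \eqref{eq:recu_gamma_i} by combining the Cauchy-Schwarz inequality (Lemma~\ref{l:CS}) with the operator monotonicity $\bm{\Gamma}^i_t \preceq_\mu \bm{\Gamma}^0_t$ established in the proof of Lemma~\ref{lemma:simple_convergence_lyapu}. This will reduce the desired estimate to the uniform bounds on $\mathcal{U}^i_t$ from Lemma~\ref{lemma:simple_convergence_lyapu}(i) and on $\Gamma^0$ from Remark~\ref{R:Linfty}, with no further Gronwall-type argument.

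First, for fixed $z \in \R^{d'}$, I would denote by $z_c \in L^\infty(\mu)$ the constant function $\tau \mapsto z$ and observe that
\[
\left(\int_{\R_+} \Gamma^i_t(\theta,\tau)\mu(d\tau)\right)z = (\bm{\Gamma}^i_t z_c)(\theta), \quad \langle z_c, \bm{\Gamma}^i_t z_c\rangle_\mu = z^\T \mathcal{U}^i_t z \leq z^\T \mathcal{U}^0_0 z,
\]
the last inequality being Lemma~\ref{lemma:simple_convergence_lyapu}\eqref{Lconvi}. Applying the Cauchy-Schwarz inequality of Lemma~\ref{l:CS} to the nonnegative symmetric $\bm{\Gamma}^i_t$ with test functions $\varphi \in L^1(\mu)$ and $z_c$, and then invoking the monotonicity $\bm{\Gamma}^i_t \preceq_\mu \bm{\Gamma}^0_t$ (inherited from $\Gamma^i_t - \Gamma^0_t \in \S^d_+(\mu\otimes\mu)$, cf.\ the proof of Lemma~\ref{lemma:simple_convergence_lyapu}), would yield
\[
|\langle \varphi, \bm{\Gamma}^i_t z_c\rangle_\mu|^2 \leq \langle \varphi, \bm{\Gamma}^i_t \varphi\rangle_\mu \cdot z^\T \mathcal{U}^0_0 z \leq \langle \varphi, \bm{\Gamma}^0_t \varphi\rangle_\mu \cdot z^\T \mathcal{U}^0_0 z.
\]

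Next, I would note that Remark~\ref{R:Linfty} applied to the Lyapunov equation \eqref{eq:lyapunov_0} for $\Gamma^0$ (whose coefficients $Q,B,D$ are bounded constants, so that Lemma~\ref{L:lyapunovestimate} delivers \eqref{eq:estimatelyap2}--\eqref{eq:estimatelyap3}) gives $M^0 := \sup_{t\leq T}\|\Gamma^0_t\|_{L^\infty(\mu\otimes\mu)} < \infty$, whence $\langle \varphi, \bm{\Gamma}^0_t \varphi\rangle_\mu \leq M^0 \|\varphi\|_{L^1(\mu)}^2$ by a direct H\"older estimate. Combining with the previous inequality and taking the supremum over $\varphi \in L^1(\mu)$ with $\|\varphi\|_{L^1(\mu)} \leq 1$ would yield, by the duality between $L^1(\mu)$ and $L^\infty(\mu^\T)$,
\[
\left\|\int_{\R_+} \Gamma^i_t(\cdot,\tau)\mu(d\tau)\,z\right\|_{L^\infty(\mu^\T)} \leq \sqrt{M^0\, z^\T \mathcal{U}^0_0 z}, \quad i \in \N, \; t \leq T,
\]
from which \eqref{eq:rec_hyp} follows by optimizing over $z$ in the unit ball of $\R^{d'}$.

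The main technical point will be the applicability of the Cauchy-Schwarz inequality of Lemma~\ref{l:CS} when one argument lies in $L^\infty(\mu)$ rather than $L^1(\mu)$. This is handled by approximating $z_c$ by the truncations $z_n := \ind_{[0,n]}(\cdot) z \in L^1(\mu)$, applying Lemma~\ref{l:CS} with $(\varphi, z_n)$, and passing to the limit $n \to \infty$ via dominated convergence: this is legitimate since $\Gamma^i_t \in L^\infty(\mu\otimes\mu)$ by Remark~\ref{R:Linfty} applied to $\Gamma^i$ itself (the coefficients $\tilde{Q}^{i-1}, \tilde{B}^{i-1}, \tilde{D}^{i-1}$ of the Lyapunov equation \eqref{eq:recu_gamma_i} satisfied by $\Gamma^i$ are bounded, albeit with an $i$-dependent constant, which suffices for the qualitative $L^\infty$-membership).
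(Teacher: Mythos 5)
Your strategy is genuinely different from the paper's (the paper integrates the Lyapunov equation \eqref{eq:recu_gamma_i} over $\tau$ and closes a Gronwall/convolution inequality for $G^n_t(\theta)=\sup_{i\leq n}\bigl|\int_{\R_+}\Gamma^i_t(\theta,\tau)\mu(d\tau)\bigr|$, using the uniform two-sided bounds \eqref{uniform_bound_double_int} only as coefficients), but your final duality step has a genuine gap. Writing $\mu(d\theta)=h(\theta)\,|\mu|(d\theta)$ with $h$ the $d\times d'$ Radon--Nikodym density, the supremum of $|\langle\varphi,\psi\rangle_\mu|$ over $\varphi\in L^1(\mu)$ with $\|\varphi\|_{L^1(\mu)}\leq 1$ controls only $\esssup_\theta |h(\theta)^\top\psi(\theta)|$, not $\esssup_\theta|\psi(\theta)|$: the pairing \eqref{dual} is a bilinear form, not an isometric duality, and $h(\theta)^\top$ may have a nontrivial kernel (it is non-square when $d\neq d'$, and can be rank-deficient even when $d=d'$). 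The obstruction is structural rather than technical: every quantity your argument uses --- the forms $\langle\varphi,\boldsymbol{\Gamma}^i_t\psi\rangle_\mu$, the matrices $\mathcal U^i_t$, the Cauchy--Schwarz bound of Lemma~\ref{l:CS}, the monotonicity $\boldsymbol{\Gamma}^i_t\preceq_\mu\boldsymbol{\Gamma}^0_t$ --- involves the kernel sandwiched by $\mu$ on \emph{both} sides, whereas \eqref{eq:rec_hyp} bounds the one-sided integral. For instance, take $d=2$, $d'=1$, $\mu(d\theta)=(1,0)^\top\nu(d\theta)$ with $\nu$ a scalar measure: then every sandwiched quantity sees only the $(1,1)$ entry of $\Gamma^i_t$, while \eqref{eq:rec_hyp} also requires control of $\int_{\R_+}(\Gamma^i_t)_{21}(\theta,\tau)\nu(d\tau)$, which your chain of inequalities never touches. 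Your argument does go through in the scalar case $d=d'=1$ (where $|h|=1$ and the duality is exact), and there it is indeed slicker than the paper's, but not in the paper's generality; the paper's detour through the equation is precisely what transfers the uniform two-sided information to the one-sided integral, via the sup-over-$i$ Gronwall trick that makes the constant independent of $i$.

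Two secondary points. First, the monotonicity you invoke should read $\Gamma^0_t-\Gamma^i_t\in\S^d_+(\mu\otimes\mu)$ (your displayed difference has the wrong sign, though the intended ordering $\boldsymbol{\Gamma}^i_t\preceq_\mu\boldsymbol{\Gamma}^0_t$ is the correct one from \eqref{eq:decreasing_seq}). Second, in the truncation step the qualitative membership $\Gamma^i_t\in L^\infty(\mu\otimes\mu)$ does not by itself provide a dominating function when $|\mu|(\R_+)=\infty$ (e.g.\ the fractional case): you need either the $i$-dependent row bound \eqref{eq:tempestimatelyap3} maintained along the induction, or the paper's own route of pairing a kernel in $L^1(\mu\otimes\mu)$ against bounded test functions as in the density argument of Lemma~\ref{lemma:simple_convergence_lyapu}; this part is fixable, unlike the duality step.
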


\begin{proof}
Lemma \ref{lemma:simple_convergence_lyapu}  ensures that there  exists  a constant $M>0$ such that for every $i \in \N$
\bes{
	\label{uniform_bound_double_int}
    \sup_{t \leq T}\left|\int_{\R_+^2}\mu(d\theta)^\T \Gamma^i_t(\theta,\tau)\mu(d\tau)\right| \vee \sup_{t \leq T} \left|\int_{\R_+^2}e^{-\theta t }\mu(d\theta)^\T \Gamma^i_t(\theta,\tau)\mu(d\tau)\right| \leq M.
}
Fix $i \in \N$. We proceed as in the proof of Lemma~\ref{L:lyapunovestimate} to bound the 
quantity $g^i_t(\theta)= \left| \int_{\R_+} \Gamma^{i+1}_t(\theta, \tau) \mu(d\tau) \right|$. By construction $\Gamma^{i+1}$ solves \eqref{eq:recu_gamma_i}, so that an  integration over the $\tau$-variable combined with \eqref{uniform_bound_double_int} and the triangle inequality yield 
\bes{
    \label{ineq:Gamma_iGamma_iplus1}
   g^{i+1}_t(\theta) \leq & 4 r T +   4 r \int_t^T (1+ |K(t-s)|)\left(   g^i_s(\theta) +    g^{i+1}_s(\theta)\right) ds, 
}
where $r$ is a constant only depending on $B,D,Q$ $N$ and $M$.  
Let us now show the desired inequality \eqref{eq:rec_hyp}. For $n \geq 0$, let us  define 
\bes{
    G^n_t(\theta) = \sup_{i=0,\ldots,n}  \left| \int_{\R_+} \Gamma^{i}_t(\theta,\tau)\mu(d\tau) \right|.
    } 
The inequality \eqref{ineq:Gamma_iGamma_iplus1} yields for every $i\geq 0$
\bes{
    G^{i+1}_t(\theta)  \leq 4 r T +   4 r \int_t^T  \left(1+ 2|K(t-s)|\right) G^{i+1}_s(\theta) ds.
    }
Consequently, the generalized Gronwall inequality implies that there exists a constant c only depending on $B,C,D,F,N,T,K$ and $M$ such that for every $n\in\N$, $t \in [0,T]$ we have $|G_t^n(\theta)|\leq c$ for $\mu$-almost  every $\theta$ and $t\in [0,T]$.
\end{proof}

\vspace{1mm}

\begin{lemma}
\label{L:uniform_convergence_U}
The sequence of functions $(\Uc^i)_{i \geq 0}$ 
converges uniformly on $C([0,T],\S^d_+)$ to its simple limit $\Uc$ introduced in Lemma \ref{lemma:simple_convergence_lyapu}. 
\end{lemma}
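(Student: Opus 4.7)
The plan is to combine equi-continuity with pointwise monotone convergence. By Lemma~\ref{lemma:simple_convergence_lyapu}\ref{Lconvi}, $(\Uc^i)_{i \geq 0}$ is a non-increasing sequence of continuous $\S^d_+$-valued functions on $[0,T]$ with pointwise limit $\Uc$. If we can establish equi-continuity of the family $(\Uc^i)_{i \geq 0}$ on $[0,T]$, then the pointwise convergence upgrades automatically to uniform convergence on the compact interval $[0,T]$ by a standard $3\varepsilon$-argument; equivalently, equi-continuity forces $\Uc$ to be continuous, and the scalar Dini theorem applied to $t \mapsto z^\top \Uc^i_t z$ for $z$ ranging over a finite spanning set of $\R^{d'}$ yields uniform convergence. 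The crux is therefore equi-continuity.

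To this end we first derive uniform-in-$i$ bounds on the coefficients governing the Lyapunov equation for $\Gamma^{i+1}$. By \eqref{eq:gammaisd}, $\hat N(\Gamma^i_t) - \lambda I_m \in \S^m_+$ uniformly in $(i,t)$, so $\|(\hat N(\Gamma^i_t))^{-1}\| \leq \lambda^{-1}$. Combined with the uniform bound \eqref{uniform_bound_double_int} on $\int_{\R_+^2} \mu^\top \Gamma^i_t \mu$ and the $\mu$-a.e.~uniform bound of Lemma~\ref{lemma:simple_int_uniform_bound} on $\int_{\R_+} \Gamma^i_t(\theta,\cdot)\, \mu$, formula \eqref{deftheta} gives a constant $c_*$, independent of $(i,t)$, such that $|\Theta^i_t(\theta)| \leq c_*$ for $\mu$-a.e.~$\theta$. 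Inserting this into \eqref{coeff_recu_lyapu} yields uniform $\mu$-a.e.~bounds on $\tilde Q^i, \tilde B^i, \tilde D^i$, and hence a constant $C$, independent of $i$, such that $|F_i(u,\Gamma^{i+1}_u)(\theta,\tau)| \leq C$ for $du \otimes \mu \otimes \mu$-a.e.~$(u,\theta,\tau)$.

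With this uniform bound in hand, we mimic the continuity argument of Lemma~\ref{l:lyapunov_continuity} directly at the level of $\Uc^{i+1}$. Plugging \eqref{eq:recu_gamma_i} into the definition of $\Uc^{i+1}$ and splitting for $s \leq t$,
\begin{align*}
\Uc^{i+1}_s - \Uc^{i+1}_t
&= \int_{\R_+^2}\mu(d\theta)^\top \int_s^t e^{-(\theta+\tau)(u-s)} F_i(u,\Gamma^{i+1}_u)(\theta,\tau)\,du\,\mu(d\tau) \\
&\quad + \int_{\R_+^2}\mu(d\theta)^\top \int_t^T \bigl(e^{-(\theta+\tau)(u-s)} - e^{-(\theta+\tau)(u-t)}\bigr) F_i(u,\Gamma^{i+1}_u)(\theta,\tau)\,du\,\mu(d\tau).
\end{align*}
Using the uniform bound on $F_i$, Fubini, the identity $\int_{\R_+^2}|\mu|(d\theta)|\mu|(d\tau)\,e^{-(\theta+\tau)\alpha} = \bar K(\alpha)^2$, Cauchy--Schwarz, and the $L^2$-translation continuity \eqref{L2cont} of $\bar K$, both terms are bounded by $\omega(|t-s|)$ for a modulus $\omega$ independent of $i$, following exactly the computation of Lemma~\ref{l:lyapunov_continuity} with the single kernel $\bar K$ playing the role of both $\bar K_1$ and $\bar K_2$. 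Since $\Uc^0$ is itself continuous, equi-continuity of the entire sequence follows, which completes the proof.

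The main obstacle is the first step: carefully propagating a single uniform bound through the nonlinear definition \eqref{deftheta} of $\Theta^i$. This is where the coercivity assumption $N - \lambda I_m \in \S^m_+$ from \eqref{assumption:QN} is used in an essential way, to control $(\hat N(\Gamma^i_t))^{-1}$ uniformly in $i$.
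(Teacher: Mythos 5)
Your proof is correct, but it takes a genuinely different route from the paper. The paper first passes to the limit in the integrated Lyapunov equation \eqref{eq:recu_gamma_i} to derive an equation \eqref{eq:limit_U} satisfied by the pointwise limit $\Uc$ (together with the auxiliary limits $\Vc(e^{\cdot(t-r)})$), then proves continuity of $\Uc$ directly from that equation --- using the uniform bound of Lemma~\ref{lemma:simple_int_uniform_bound} to control $|\Vc_r(e^{\cdot(t-r)})-\Vc_r(e^{\cdot(s-r)})|$ by $\bar K$-increments and the $L^2$-translation continuity \eqref{L2cont} --- and finally invokes Dini's theorem. You instead never touch the limiting equation: you establish equi-continuity of the approximating sequence $(\Uc^i)$ itself, by propagating the coercivity \eqref{eq:gammaisd} (giving $|(\hat N(\Gamma^i_t))^{-1}|\leq \lambda^{-1}$), the monotonicity bound of Lemma~\ref{lemma:simple_convergence_lyapu} and the $\mu$-a.e.\ uniform bound of Lemma~\ref{lemma:simple_int_uniform_bound} (via the symmetry of $\Gamma^i$, since \eqref{deftheta} integrates over the first variable) into an $i$-independent bound on $\Theta^i$, hence on the coefficients \eqref{coeff_recu_lyapu} and on $F_i$, and then repeating the computation of Lemma~\ref{l:lyapunov_continuity} with an $i$-independent modulus; pointwise monotone convergence on the compact interval then upgrades to uniform convergence. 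The ingredients are the same (your uniform bound on $\Theta^i$ is essentially the paper's later estimate \eqref{bound:Theta}, so there is no circularity since Lemma~\ref{lemma:simple_int_uniform_bound} precedes), but your argument is arguably more economical, as it avoids the limit-equation bookkeeping and the $\Vc$ terms altogether; what it does not deliver, and what the paper's route gives as a by-product, is the explicit equation \eqref{eq:limit_U} satisfied by $\Uc$ (which, however, is not reused elsewhere in the paper).
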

\begin{proof}
From Lemma \ref{lemma:simple_convergence_lyapu},  we have that $(\Uc^i)_{i \geq 0}$ is a non increasing sequence of continuous functions converging pointwise to  $\Uc$. To obtain the uniform convergence it suffices to show that $\Uc$ is continuous and apply Dini's theorem.  To do so our strategy is to show that $t \mapsto \Uc_t$ solves an equation whose solutions are continuous. 

\textit{Step 1. Equation satisfied by $\Uc$.}
By definition $\Gamma^{i+1}$ is solution to \eqref{eq:recu_gamma_i}, thus by integrating over $\tau,\theta$ and applying Fubini's theorem we get
\bec{
	\label{eq:U_z}
    	\Uc^{i+1}_t  &=   \int_t^T  \tilde{F}_i(t,r) \left(\Uc^{i+1} \right) dr,
    	\\
    	\tilde{F}_i(t,r) (\Uc^{i+1})    &=  \textbf{I}^i(t,r)  +  \textbf{II}^i(t,r)   + \textbf{III}^i(t,r) +\textbf{III}^i(t,r)^\T ,        \\
}
where 
\begin{align}
 \textbf{I}^i(t,r)  &= \int_{\R_+^2}  e^{-\theta(r-t)}  \mu(d\theta)^\T \tilde Q^i_r(\theta, \tau) \mu(d\tau) e^{-\tau(r-t)}, \\
  \textbf{II}^i(t,r)  &= \left( \int_{\R_+} e^{-\theta(r-t)} \mu(d\theta)^\T \tilde D_r^i(\theta)^\T \right)  \Uc_r^{i+1}  \left(  \int_{\R_+} \tilde D_r^i(\tau) \mu(d\tau)e^{-\tau(r-t)} \right)  , \\
  \textbf{III}^i(t,r)  &= \left(  \int_{\R_+}  e^{-\theta(r-t)}\mu(d\theta)^\T   \tilde B_r^i(\theta)^\top \right)   \Vc_r^i (e^{\cdot(t-r)})  ,   
\end{align}    
with $\tilde B^i,\tilde D^i$ and $\tilde Q^i$ defined as in \eqref{coeff_recu_lyapu}. The pointwise convergences and the uniform bounds stated in Lemma  \ref{lemma:simple_convergence_lyapu} allow us to apply the dominated convergence  theorem to \eqref{eq:U_z} to get
\bec{
	\label{eq:limit_U}
	\Uc_t  &= \int_t^T  \tilde{F}(t,r) \left(\Uc \right)  dr, \\
	\tilde{F}(t,r)  \left(\Uc  \right)   &=  \textbf{I}(t,r)  +  \textbf{II}(t,r)   + \textbf{III}(t,r) +\textbf{III}(t,r)^\T ,    \\
}
where
\begin{align}
\textbf{I}(t,r)   &=  K(r-t)^\T Q  K(r-t) +  \tilde{\Theta}(t,r)^\T N   \tilde{\Theta}(t,r), \\
\textbf{II}(t,r)  &= \left(DK(r-t) + F^\T   \tilde{\Theta}(t,r) \right)^\T \Uc_r  \left(DK(r-t) + F^\T   \tilde{\Theta}(t,r) \right) ,\\
\textbf{III}(t,r) &= \Vc_r\big(e^{\cdot(t-r)}\big) \left(B K(t-r) + C^\T \tilde{\Theta}(t,r)\right) , \\
\tilde{\Theta}(t,r) &= -\left(N + F^\T \Uc_r F\right)^{-1}   \left(F^\T \Uc_r D K(r-t) + C^\T \Vc_r\big(e^{\cdot(t-r)}\big)   \right).
\end{align}    

\textit{Step 2. Continuity of $t \mapsto \Uc_t$.}  We first observe that by virtue of Lemma \ref{lemma:simple_convergence_lyapu} $t\mapsto \mathcal U_t$ and $t\mapsto \mathcal V_t$ are bounded on $[0,T]$ so that  there exists $c>0$ such that
\bes{
	\label{bound:F_tilde}
	\left |\tilde{F}(t,r)(\Uc) \right| \leq c \left(1 + |K(r-t)|^2 \right), \quad  t \leq r \leq T.
}
Fixing  $ t \leq s \leq T$, it follows from \eqref{eq:limit_U} that 
\begin{align*}
\Uc_t  - \Uc_s  &= \;   \int_t^s  \tilde{F}(t,r) \left( \Uc  \right)  dr  + \int_s^T \left( \tilde{F}(t,r) \left(\Uc \right)  -  \tilde{F}(s,r) \left(\Uc \right)\right)  dr \\
		      & = \;  \textbf{1}_{t,s} + \textbf{2}_{t,s}.
\end{align*}
By \eqref{bound:F_tilde}, 
\bes{
	\left| \textbf{1}_{t,s} \right| \leq c \left( s-t + \|K\|^2_{L^2(0,s-t)} \right).
}
By virtue of the square integrability of $K$, the right hand side goes to 0 as $t \uparrow s$. Similarly, using $u^\T Q u - v^\T Q v = (u+v)^\T Q (u-v) $, we get
\bes{
	\left | \textbf{2}_{t,s} \right| \leq& c \int_s^T \left| K(r-t) - K(r-s) \right | \left| K(r-t) + K(r-s)  \right | dr \\
	&+c \int_s^T |\Vc_r(e^{\cdot(t-r)}) -  \Vc_r(e^{\cdot(s-r)}) | dr \\
	\leq & \;  \textbf{A}_{t,s} + \textbf{B}_{t,s}, 
}
where $c$ is a constant. The first term can be easily handled with Cauchy-Schwarz inequality
\bes{
  \textbf{A}_{t,s}  \leq & 2 c \|K\|^2_{L^2(0,T)} \int_0^T \left| K(r + s-t) - K(r) \right |^2 dr, 
}
 which shows that $\textbf{A}_{t,s} $ converges to zero as $t$ goes to $s$, recall \eqref{L2cont}. For the second term note that for all  $i \in \N$, $t \leq s\leq r\leq T$, 
 \bes{
 	\label{bound_diff_V_i}
 	\Bigg | \int_{\R_+^2} & \mu(d\theta)^\T \Gamma^i_r(\theta, \tau)   \mu(d\tau) e^{-\tau(r-t)} - \int_{\R_+^2} \mu(d\theta)^\T \Gamma^i_r(\theta, \tau)   \mu(d\tau) e^{-\tau(r-s)} \Bigg | \\
	& \leq   \esssup_{\tau' \in \R_+} \left| \int_{\R_+} \mu(d\theta)^\T \Gamma^i_r(\theta, \tau')   \right|  \int_{\R_+}  e^{-\tau(r-t)} - e^{-\tau(r-s)}   |\mu|(d\tau) \\
	&\leq  \kappa \left |\bar K (r-s) - \bar K(r-t) \right |,
 }
 where $\kappa$ is the uniform bound from Lemma \ref{lemma:simple_int_uniform_bound} and 	\begin{align}\label{eq:kbarproof}
 \bar K(t)= \int_{\R_+} e^{-\theta t } |\mu|(d\theta), \quad t>0.
 \end{align}
Taking the limit  $i$ $\rightarrow$ $\infty$  in \eqref{bound_diff_V_i}  and invoking Lemma \ref{lemma:simple_convergence_lyapu}, we obtain 
 \bes{
 	\label{bound_diff_V}
	\left| \Vc_r(e^{\cdot(t-r)}) -  \Vc_r(e^{\cdot(s-r)})  \right | \leq \kappa \left |\bar K (r-s) - \bar K(r-t) \right |.
 }
 Thus, similarly as for $\textbf{A}_{t,s}$ we get that  $\textbf{B}_{t,s}$ converges to $0$ as $t$ goes to $s$. As a result $\Uc$ is continuous.
\end{proof}

\vspace{1mm}

\begin{lemma}\label{L:cauchy}
    For any $t\leq T$,  $(\Gamma_t^i)_{i\geq 0}$ is a Cauchy sequence in $L^1(\mu\otimes \mu)$.
\end{lemma}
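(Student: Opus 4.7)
The plan is to bound $\|\Delta^i_t\|_{L^1(\mu\otimes\mu)}$ uniformly in $t$ and show it tends to zero as $i \to \infty$, leveraging the Lyapunov equation \eqref{eq:Deltai} for $\Delta^i = \Gamma^i - \Gamma^{i+1}$ together with the uniform convergence of the integrated trace $(\mathcal{U}^i)$ from Lemma \ref{L:uniform_convergence_U}. Once this is achieved, the telescoping decomposition $\Gamma^j_t - \Gamma^i_t = \sum_{k=j}^{i-1}\Delta^k_t$ (a sum of elements of $\S^d_+(\mu\otimes\mu)$) yields the Cauchy property; alternatively, since $\Gamma^j_t - \Gamma^i_t \in \S^d_+(\mu\otimes\mu)$ for $j < i$, the same strategy can be applied directly to $\Gamma^j - \Gamma^i$, which satisfies an analogous Lyapunov-type equation.

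The key step is showing that $|\rho^i_s(\theta)|$ is uniformly small. From \eqref{deftheta}, $\rho^i_s = \Theta^{i-1}_s - \Theta^i_s$ decomposes via the resolvent identity $\hat N(\Gamma^{i-1})^{-1} - \hat N(\Gamma^i)^{-1} = \hat N(\Gamma^i)^{-1}(\hat N(\Gamma^i) - \hat N(\Gamma^{i-1}))\hat N(\Gamma^{i-1})^{-1}$ into a term controlled by $|\mathcal{U}^{i-1}_s - \mathcal{U}^i_s|$ (which vanishes uniformly by Lemma \ref{L:uniform_convergence_U}) plus a term involving the single integral
\[
W^{i-1}_s(\tau) - W^i_s(\tau) := \int_{\R_+} \mu(d\theta')^\T (\Gamma^{i-1}_s - \Gamma^i_s)(\theta', \tau).
\]
To control the latter pointwise in $\tau$, I apply the Cauchy--Schwarz inequality for the non-negative operator $\boldsymbol{\Delta}^{i-1}_s$ (Lemma \ref{l:CS}): pairing a constant test vector $\varphi \equiv z$ against an indicator-times-vector test function on the $\tau$-side gives
\[
|z^\T (W^{i-1}_s(\tau) - W^i_s(\tau)) z'|^{2} \leq z^\T (\mathcal{U}^{i-1}_s - \mathcal{U}^i_s) z \cdot C,
\]
for a uniform constant $C$ stemming from Lemma \ref{lemma:simple_int_uniform_bound}. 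The right-hand side tends to zero uniformly in $(s, \tau)$, so $|\rho^i_s(\theta)| \leq \varepsilon_i \to 0$ uniformly in $(s, \theta)$.

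Given this uniform control on $\rho^i$, I then estimate $\|\Delta^i_t\|_{L^1(\mu\otimes\mu)}$ by integrating \eqref{eq:Deltai} against $|\mu|\otimes|\mu|$ and applying the triangle inequality. The contribution of the source $\tilde Q^{i,\delta}$ is of order $\varepsilon_i^2 \, \|\bar K\|_{L^2(0,T)}^2$; the three remaining terms, involving $\tilde B^i$, $\tilde D^i$ (uniformly bounded in $i$ thanks to Lemma \ref{lemma:simple_int_uniform_bound}) and the single and double integrals of $\Delta^i_s$ against $|\mu|$, produce convolution-type terms absorbable via a generalized Gronwall inequality exactly as in the proof of Lemma \ref{L:lyapunovestimate}. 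This yields $\sup_{t \leq T}\|\Delta^i_t\|_{L^1(\mu \otimes \mu)} \leq C' \varepsilon_i \to 0$. Repeating this Lyapunov-type estimate with $\Delta^i$ replaced by $\Xi^{j,i} = \Gamma^j - \Gamma^i \in \S^d_+(\mu\otimes \mu)$ (whose associated source is controlled by $\sup_{k \geq j}|\rho^k|$) delivers the desired Cauchy conclusion.

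The main obstacle is that the established convergence of $(\mathcal U^i)$ is of operator/trace type, whereas the $L^1(\mu \otimes \mu)$ norm is defined against the total variation $|\mu|$. The Cauchy--Schwarz inequality for non-negative kernels applied to $\boldsymbol{\Delta}^{i-1}_s$, combined with the uniform bound on the single-integrated kernels from Lemma \ref{lemma:simple_int_uniform_bound}, supplies the bridge, but the use of indicator-type test functions against the signed matrix-valued measure $\mu$ must be handled with care to derive the pointwise-in-$\tau$ control of $W^{i-1} - W^i$ that ultimately drives the estimate on $\rho^i$.
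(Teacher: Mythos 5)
The key step of your argument has a genuine gap: the claimed pointwise-in-$\tau$ inequality
\begin{equation}
\big|z^\T\big(W^{i-1}_s(\tau)-W^i_s(\tau)\big)z'\big|^{2}\;\leq\; z^\T\big(\Uc^{i-1}_s-\Uc^i_s\big)z\cdot C
\end{equation}
does not follow from Lemma \ref{l:CS} plus Lemma \ref{lemma:simple_int_uniform_bound}. To reach the pointwise value $\int_{\R_+}\mu(d\theta')^\T\Delta^{i-1}_s(\theta',\tau)$ you must localize on the $\tau$-side with $\psi=\ind_B\, z'$ and let $B$ shrink to $\tau$. Cauchy--Schwarz then bounds the pairing $\langle \varphi,\boldsymbol{\Delta}^{i-1}_s\psi\rangle_\mu$ by $\big(z^\T(\Uc^{i-1}_s-\Uc^i_s)z\big)^{1/2}\langle\psi,\boldsymbol{\Delta}^{i-1}_s\psi\rangle_\mu^{1/2}$, and the only available control on the diagonal factor, via the uniform bound $\big|\int_{\R_+}\Delta^{i-1}_s(\theta,\tau')\mu(d\tau')\big|\leq 2\kappa$ of Lemma \ref{lemma:simple_int_uniform_bound}, gives $\langle\psi,\boldsymbol{\Delta}^{i-1}_s\psi\rangle_\mu\leq c\,|\mu|(B)$, i.e.\ order $|\mu|(B)$ rather than $|\mu|(B)^2$. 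So the localized pairing is only $O\big(\sqrt{\epsilon\,|\mu|(B)}\big)$, and after normalizing by $|\mu|(B)$ and shrinking $B$ the bound blows up: no uniform constant $C$ exists, and the asserted uniform smallness of $\rho^i$ is not obtained. This is exactly the operator-versus-kernel obstruction you flag at the end; it is not a matter of handling indicators ``with care'', the estimate itself is quantitatively too weak. A secondary issue: even granting $\sup_t\|\Gamma^i_t-\Gamma^{i+1}_t\|_{L^1(\mu\otimes\mu)}\leq C'\varepsilon_i\to 0$, telescoping gives the Cauchy property only if $\sum_i\varepsilon_i<\infty$, which you do not establish; and in your alternative route the source for $\Gamma^j-\Gamma^i$ involves $\Theta^{j-1}-\Theta^{i-1}$, which is bounded by a sum, not by $\sup_{k\geq j}|\rho^k|$.

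The paper avoids the pointwise Cauchy--Schwarz step entirely by a bootstrap. Working directly with $\Delta^{ij}=\Gamma^i-\Gamma^j$ and its Lyapunov equation \eqref{eq:Delta_ijCauchy}, it only uses the uniform convergence of $\Uc^i$ (and of the inverses $(N+F^\T\Uc^i F)^{-1}$) to get $|\rho^{ij}_t(\tau)|\leq c\big(\epsilon+\big|\int_{\R_+}\mu(d\theta')^\T\Delta^{ij}_t(\theta',\tau)\big|\big)$, with the single integral of $\Delta^{ij}$ left as an unknown. Integrating \eqref{eq:Delta_ijCauchy} in the $\tau$-variable then produces a closed convolution inequality for that single integral with source $O(\epsilon)$, and the generalized Gronwall inequality \cite[Theorem 9.8.2]{GLS:90} yields that it is itself $O(\epsilon)$, $\mu$-a.e.; only after this is the equation integrated in both variables and Gronwall applied a second time to conclude $\sup_t\|\Delta^{ij}_t\|_{L^1(\mu\otimes\mu)}=O(\epsilon)$. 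To repair your proof you would need to replace the pointwise Cauchy--Schwarz estimate on $W^{i-1}-W^i$ by such a two-stage Gronwall argument (or another genuine bridge from the convergence of $\Uc^i$ to the single-integrated kernels).
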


\begin{proof}
Let $t\leq T$ and $i\leq j$. Let $\Theta^j, \tilde B^j, \tilde D^j$ be defined as in \eqref{coeff_recu_lyapu} for any $j\in\N$. Then cumbersome but straightforward computations, detailed in Lemma \ref{l:eq_gamma_i_recu}, yield that  $\Delta_t^{i,j}=\Gamma^i_{t}-\Gamma^{j}_t$  solves the Lyapunov equation 
\bec{
	\Delta^{ij}_t(\theta, \tau) &= \int_t^T e^{-(\theta + \tau)(s-t)} F^{\delta}_{ij}(s,\Delta^{ij}_s)(\theta,\tau)ds,
	\\
	F^{\delta}_{ij}(t,\Delta)(\theta, \tau)&= \tilde Q^{ij,\delta}_t{ (\theta,\tau)}+ \tilde D_t^{j-1}(\theta)^\T \int_{\R_+^2} { \mu(d\theta')^\T \Delta(\theta',\tau ') \mu(d\tau ')} \tilde D_t^{j-1}(\tau) \\
	&\quad  + \tilde B_t^{j-1}(\theta)^\top \int_{\R_+} \mu(d\theta')^\T \Delta(\theta',\tau)  + \int_{\R_+} \Delta(\theta,\tau ') \mu(d\tau ') \tilde B_t^{j-1}(\tau) \\
	& \quad + S^{ij}_t(\theta)^\T \rho^{ij}_t(\tau) +\rho^{ij}_t(\theta)^\T  S^{ij}_t(\tau),   
	 \label{eq:Delta_ijCauchy}
}
where
\begin{align*}  
\rho^{ij}&=\Theta^{i-1} -\Theta^{j-1}, \\
\tilde Q^{ij,\delta}_t (\theta, \tau)&=\rho^{ij}_t(\theta)^\T \left(N + F^\T  \mathcal U^i_t F\right)\rho^{ij}_t(\tau),\\   
S^{ij}_t(\tau) &= C^\T \int_{\R_+} \mu(d\theta')^\T  \Gamma^i_t(\theta',\tau) + F^\T \mathcal U^i_t D  + \Big(N + F^\T  \mathcal U^i_t F \Big)\Theta^{j-1}_s(\tau),
\end{align*}
and $\mathcal U$ is defined as in Lemma~\ref{lemma:simple_convergence_lyapu}.
We will show that  $\|\Delta^{ij}_t\|_{L^1(\mu \otimes \mu)}\to 0$ as $i,j\to \infty$ by successive applications of Gronwall inequality and by showing that $\rho^{ij}$ is small enough. For this, we fix $\epsilon > 0$ and  we denote by $c>0$ a scalar independent of $i,j,t, \tau$  and $\theta$ that may vary from line to line throughout the proof. 

\textit{Step 1. We bound the terms $|\rho^{ij}_t(\tau)|$ and   $\left|\int_{\R_+} \Delta^{ij}_t(\theta,\tau)\mu(d\tau) \right|$.} We first write 
\bes{
	\label{eq:rho_ij}
	\rho^{ij}_t(\tau) =&\left(  \left(N + F^\T \mathcal U^j_t  F\right)^{-1} - \left(N + F^\T \mathcal U^i_t   F\right)^{-1} \right)  \left( F^\T \mathcal U^j_t D +C^\T \int_{\R_+} \mu(d\theta')^\T \Gamma^j_t(\theta',\tau)  \right)\\
	&-\left(N + F^\T \mathcal U^i_t   F\right)^{-1}   \left(F^\T \left(\mathcal U^i_t-\mathcal U^j_t\right)  D +C^\T  \int_{\R_+} \mu(d\theta')^\T \Delta^{ij}_t(\theta',\tau)  \right).  
}
 By the  uniform convergence of  the sequence of functions $\left(\mathcal U^i\right)_{i \geq 0}$, obtained in  Lemma \ref{L:uniform_convergence_U},  one can find $n' \in \N$  such that 
\bes{
	\label{bound:uniform_delta_ij}
	&\left|\mathcal U^i_t-\mathcal U^j_t  \right| +  \left|  \left(N + F^\T \mathcal U^j_t F\right)^{-1} - \left(N + F^\T \mathcal U^i_t   F\right)^{-1} \right|   \leq \epsilon, \quad t\leq T, \quad i,j\geq n',
}
where the bound for the second term comes from  the matrix identity $A^{-1}-B^{-1}=B^{-1}(B-A)A^{-1}$.
Furthermore, it follows from Lemmas \ref{lemma:simple_convergence_lyapu} and \ref{lemma:simple_int_uniform_bound} that 
\bes{
    \label{bound:int_intint}
    \left|\mathcal U^i_t  \right| \vee \left|\int_{\R_+} \Gamma^i_t(\theta,\tau)\mu(d\tau) \right| \leq c, \quad \mu-a.e., \quad t\leq T, \quad i \geq 0.
}
   Combining the previous identity with  \eqref{bound:uniform_delta_ij} and \eqref{eq:rho_ij} yields 
\begin{align}\label{eq:rhobound}
|\rho^{ij}_t(\tau)|\leq   c\left(\epsilon +  \left|\int_{\R_+} \Delta^{ij}_s(\theta,\tau)\mu(d\tau)\right|\right), \quad \mu-a.e., \quad t\leq T, \quad i,j\geq n'.
\end{align}
In addition, \eqref{bound:int_intint} yields that 
\begin{align}
\label{bound:Theta}
|\Theta^i_t(\theta)| \leq  c, \quad \mu-a.e., \quad t\leq T, \quad i \geq 0,
\end{align}
which in turn implies 
\begin{align}
    \label{bound:Theta2}
\int_{\R_+}e^{-\theta(s-t)}\left|\Theta^i_t(\theta)\right||\mu|(d\theta)\leq \kappa' \bar{K}(s-t), \quad s \leq t \leq T, \quad i \geq 0,
\end{align}
where $\bar K$ is given by \eqref{eq:kbarproof}.
Fix $i,j\geq n'$ and $t\leq T$. Combining all the above and integrating equation \eqref{eq:Delta_ijCauchy}  over the $\tau$ variable leads to
\bes{
    \left|\int_{\R_+} \Delta^{ij}_t(\theta,\tau)\mu(d\tau) \right| \leq c \int_t^T (1 + \bar{K}^2(s-t)) \left(\epsilon +  \left|\int_{\R_+} \Delta^{ij}_s(\theta,\tau)\mu(d\tau)\right|\right) ds, \quad \mu-a.e.
}
 An application of the generalized Gronwall inequality for convolution equation with $R$ the resolvent of $c(1 + \bar{K}^2)$, see \cite[Theorem 9.8.2]{GLS:90}, yields
\bes{
    \label{bound_delta_int_eps}
      \left|\int_{\R_+} \Delta^{ij}_t(\theta,\tau)\mu(d\tau) \right| \leq \epsilon c \left(T + \|\bar K\|^2_{L^2(0,T)}\right)\left(1 + \|R\|_{L^1(0,T)}\right),\quad  \mu-a.e.
}

{\textit{Step 2.}}
Plugging \eqref{bound:uniform_delta_ij}, \eqref{bound:int_intint}, \eqref{bound:Theta}, and \eqref{bound_delta_int_eps} into \eqref{eq:rho_ij}, we obtain
\bes{
    \label{ineq:Intrho_ij}
    \int_{\R_+}e^{-\theta(s-t)} \left| \rho^{ij}_s(\tau) \right| |\mu|(d\tau) \leq & r \left(\epsilon \left(1 +  \bar{K}(s-t)\right) +  \| \Delta^{ij}_s\|_{L^1(\mu \otimes \mu)} \right).
}
Finally by plugging \eqref{bound:int_intint}, \eqref{bound:Theta}, \eqref{bound_delta_int_eps} and \eqref{ineq:Intrho_ij}  into \eqref{eq:Delta_ijCauchy} and integrating over the $\theta$ and $\tau$ variables we obtain
\bes{
    \|\Delta^{ij}_t\|_{L^1(\mu \otimes \mu)} \leq &  c \int_t^T \left(1 + \bar K^2(t-s) \right)  \left(\epsilon + \|\Delta^{ij}_s\|_{L^1(\mu \otimes \mu)}  \right)ds.
}
 Another application of the generalized Gronwall inequality
 for convolution equations yields that 
\bes{
    \label{ineq:Delta_ij_eps}
    \|\Delta^{ij}_t\|_{L^1(\mu \otimes \mu)} \leq \epsilon c \left(T+ \|\Bar{K}^2\|^2_{L^2(0,T)}\right) \left(1 + \|{R}\|_{L^1(0,T)}\right).
}
This proves that $(\Gamma^i_t)_{i\geq 0}$ is a Cauchy sequence in  $L^1(\mu \otimes \mu)$ for every $t \in [0,T]$. 
\end{proof}

\subsection{Step 3: The limiting point of  $(\Gamma^i_t)_{i\geq 0}$ solves the Riccati equation}

\begin{lemma}
	\label{prop:existence_riccati}
	Assume  that  \eqref{assumption:QN} holds.  For each $t\leq T$, denote by   $\Gamma_t$ the limiting point in $L^1(\mu \otimes \mu)$ of   the  sequence $\left( \Gamma^i_t\right)_{i \geq 0}$ obtained from Lemma~\ref{L:cauchy}.  Then, $t\to \Gamma_t$ solves the Riccati equation \eqref{eq:Riccati_monotone_kernel_mild} with 
	\bes{
        \label{eq:bounded_riccati}
		\sup_{t\leq T}\|\Gamma_t\|_{L^1(\mu \otimes \mu)} < +\infty.
		}
\end{lemma}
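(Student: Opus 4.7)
The plan is to pass to the limit $i\to\infty$ in the Lyapunov equation
\[
\Gamma^{i+1}_t(\theta,\tau) \;=\; \int_t^T e^{-(\theta+\tau)(s-t)} F_i(s,\Gamma^{i+1}_s)(\theta,\tau)\,ds,
\]
and to identify the limiting equation with the Riccati equation \eqref{eq:Riccati_monotone_kernel_mild}. The uniform bound \eqref{eq:bounded_riccati} will be obtained first: inspecting \eqref{ineq:Delta_ij_eps}, the Cauchy estimate in Lemma~\ref{L:cauchy} is uniform in $t\in[0,T]$, so combined with $\sup_{t\leq T}\|\Gamma^0_t\|_{L^1(\mu\otimes\mu)}<\infty$ it yields $\sup_{i,t}\|\Gamma^i_t\|_{L^1(\mu\otimes\mu)}<\infty$, and this bound then persists at the limit.

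Next, I would identify the limits of the auxiliary objects. The $L^1(\mu\otimes\mu)$-convergence $\Gamma^i_t\to\Gamma_t$ immediately yields $\Uc^i_t\to\Uc_t=\int\mu^\T\Gamma_t\mu$ (consistent with the limit already produced in Lemma~\ref{lemma:simple_convergence_lyapu}), as well as $\int\mu(d\theta')^\T\Gamma^i_t(\theta',\tau)\to\int\mu(d\theta')^\T\Gamma_t(\theta',\tau)$ in $L^1(\mu)$ over $\tau$, uniformly in $t$. Combining this with the uniform coercivity in \eqref{eq:condinvN} (which persists in the limit through the matrix identity $A^{-1}-B^{-1}=B^{-1}(B-A)A^{-1}$, giving $\hat N(\Gamma_t)-\lambda I_m\in\S_+^m$), and with the uniform bound on $\Theta^i$ from Lemma~\ref{lemma:simple_int_uniform_bound}, one obtains
\[
\Theta^i_s(\tau)\;\longrightarrow\;\Theta_s(\tau):=-\hat N(\Gamma_s)^{-1}S(\Gamma_s)(\tau)\quad\text{in }L^1(ds\otimes\mu),
\]
together with uniform $L^\infty$ bounds, and therefore $\tilde Q^i_s\to Q+\Theta_s^\T N\Theta_s$, $\tilde B^i_s\to B+C\Theta_s$, $\tilde D^i_s\to D+F\Theta_s$ in the corresponding $L^1$ spaces, again with uniform-in-$i$ bounds.

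Combining these convergences with the $L^1(\mu\otimes\mu)$-convergence $\Gamma^{i+1}\to\Gamma$, I would then show $F_i(s,\Gamma^{i+1}_s)\to F_\infty(s,\Gamma_s)$ in $L^1(ds\otimes\mu\otimes\mu)$, where $F_\infty$ is obtained by substituting $\Theta,\Uc,\Gamma$ for $\Theta^i,\Uc^{i+1},\Gamma^{i+1}$ in the definition of $F_i$. The standard LQ completion-of-squares identity,
\[
\Theta_s(\theta)^\T\hat N(\Gamma_s)\Theta_s(\tau) + \Theta_s(\theta)^\T S(\Gamma_s)(\tau) + S(\Gamma_s)(\theta)^\T\Theta_s(\tau) \;=\; -\,S(\Gamma_s)(\theta)^\T \hat N(\Gamma_s)^{-1} S(\Gamma_s)(\tau),
\]
together with the symmetry $\Gamma_s(\theta,\tau)=\Gamma_s(\tau,\theta)^\T$ inherited from each $\Gamma^i_s$ (and hence the identity $\int\Gamma_s(\theta,\tau')\mu(d\tau')\,C=\bigl(C^\T\int\mu(d\theta')^\T\Gamma_s(\theta',\theta)\bigr)^{\!\T}$), implies $F_\infty(s,\Gamma_s)=\Rcal(\Gamma_s)$. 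Applying dominated convergence to the Lyapunov integral finally produces \eqref{eq:Riccati_monotone_kernel_mild}.

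The main obstacle is the convergence $F_i(s,\Gamma^{i+1}_s)\to F_\infty(s,\Gamma_s)$, since $F_i$ contains products whose individual factors $\Theta^i_s$, $\Uc^{i+1}_s$ and $\int\mu^\T\Gamma^{i+1}_s(\cdot,\tau)$ only converge in $L^1$ with respect to some of the variables. The key technical input that makes it work is Lemma~\ref{lemma:simple_int_uniform_bound}, which provides the $L^\infty(\mu)$-type uniform estimate $|\!\int\Gamma^i_t(\theta,\tau)\mu(d\tau)|\leq\kappa$ and thereby turns the $L^1$-convergence of single factors into $L^1$-convergence of their products; without such uniform pointwise control, the quadratic and bilinear terms appearing in $F_i$ could not be safely passed to the limit.
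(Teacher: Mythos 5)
Your proposal is correct and follows essentially the same route as the paper: pass to the limit in the iterated Lyapunov equations \eqref{eq:recu_gamma_i} using the $L^1(\mu\otimes\mu)$-convergence from Lemma~\ref{L:cauchy} together with the uniform bounds of Lemmas~\ref{lemma:simple_convergence_lyapu} and~\ref{lemma:simple_int_uniform_bound}, identify the limiting coefficients $\Theta$, $\tilde B$, $\tilde D$, and recover $\mathcal R(\Gamma_t)$ from the completed-square form via $\Theta=-\hat N(\Gamma)^{-1}S(\Gamma)$. The only (harmless) differences are organizational: the paper justifies the passage to the limit through a pointwise domination $|\Gamma^i_t(\theta,\tau)|\leq c\,(1\vee T)(1\wedge(\theta+\tau)^{-1})$ and dominated convergence rather than your $L^1$-of-marginals argument, and it obtains \eqref{eq:bounded_riccati} at the end by plugging the uniform estimates into the limiting equation instead of extracting it from the uniform-in-$t$ Cauchy estimate (where your anchor should be $\Gamma^{n'}$, not $\Gamma^{0}$, a trivial fix).
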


\begin{proof}
Fix $t\leq T$.  By virtue of the $L^1(\mu\otimes \mu)$ convergence, 
\bes{
    \label{eq:convmumu}
    \Gamma^{i}_t(\theta, \tau) \to \Gamma_t(\theta, \tau) \quad \mu \otimes \mu-a.e. 
}
Furthermore the boundedness of $(i, t) \mapsto \left|\int_{\R_+^2} \mu(d\theta)^\top \Gamma^i_t(\theta,\tau)\mu(d\tau) \right|$,  $(i, t, \theta) \mapsto \left|\int_{\R_+} \Gamma_t^i(\theta,\tau)\mu(d\tau)\right|$ and  $(i, t, \tau) \mapsto \left|\int_{\R_+} \mu(d\theta)^\T\Gamma_t^i(\theta,\tau)\right|$, see Lemmas \ref{lemma:simple_convergence_lyapu} and \ref{lemma:simple_int_uniform_bound}, combined with equation \eqref{eq:recu_gamma_i} ensures that there exists a constant $c>0$ such that $|\Gamma^i_t(\theta,\tau)| \leq c \int_t^T e^{-(\theta + \tau){(s-t)}} ds  \leq  \left( 1 \vee T\right) \left(1 \wedge (\theta + \tau)^{-1} \right)$ since $1 - e^{-\theta t} \leq \left(1 \vee t \right)  \left( 1 \wedge \theta^{-1} \right)$. 
Hence the  dominated convergence  theorem yields   
\begin{align}
    &\int_{\R_+^2} \mu(d\theta)^\top \Gamma^i_t(\theta,\tau)\mu(d\tau) \to \int_{\R_+^2} \mu(d\theta)^\top \Gamma_t(\theta,\tau)\mu(d\tau), \label{eq:Ui} \\
     &\int_{\R_+^2} \mu(d\theta)^\top \Gamma^i_t(\theta, \tau) \to \int_{\R_+^2} \mu(d\theta)^\top \Gamma_t(\theta, \tau) \; \;\mbox{and} \; \;  \int_{\R_+^2} \Gamma^i_t(\theta,\tau)\mu(d\tau) \to \int_{\R_+^2} \Gamma_t(\theta,\tau)\mu(d\tau), \; \;    \mu-a.e. \nonumber
\end{align}
	Thus, as $i \to \infty$ we have 
	\bes{
	    \label{final_convergence}
		\Theta^i_t(\theta) & \to  \Theta_t(\theta) = \left(N + F^\T \int_{\R_+^2} \mu(d\theta)^\T \Gamma_t(\theta, \tau) \mu(d\tau) F\right)^{-1} \\
		& \qquad \qquad \qquad \times \left(F^\T \int_{\R_+^2}\mu(d\theta)^\T\Gamma_t{(\theta,\tau)} \mu(d\tau)   D + C^\T \int_{\R_+} \Gamma_t(\theta, \tau) \mu(d\tau)  \right)   \\
		B^i_t(\theta) & \to  B + C^\T \Theta_t(\theta)   \\
		D^i_t(\theta) & \to D + F^\T \Theta_t(\theta) 
	}
	By plugging these convergences into \eqref{eq:recu_gamma_i} we obtain that the limit $\Gamma$ solves
	\bes{
	    \label{eq:lyapunov_riccati}
		\Gamma_t(\theta, \tau) =& \int_t^T e^{-(\theta + \tau)(s-t)}  \widetilde{\mathcal R}_s(\theta,\tau) ds, \qquad \mu \otimes \mu-a.e.}
with 
\bes{\widetilde{\mathcal R}_t(\theta,\tau)=& 
		Q +    \Theta_t(\theta)N\Theta_t(\tau) + \int_{\R_+} \Gamma_t(\theta,\tau)\mu(d\tau) \left(B + C^\T \Theta_t(\tau)\right) \\
		&
		+ \left(B + C^\T \Theta_t(\theta)\right)^\T \int_{\R_+} \mu(d\theta)^\T \Gamma_s(\theta, \tau) \\
		&+ \left(D + F^\T \Theta_t(\theta)\right)^\T \int_{\R_+^2} \mu(d\theta)^\T \Gamma_t(\theta,\tau)\mu(d\tau) \left(D + F^\T \Theta_t(\tau)\right).
	\label{eq:widetildeR}}
	By using the expression of $\Theta$ exhibited in \eqref{final_convergence},  we get that 
	$\widetilde{\mathcal R}_t(\theta,\tau) = {\mathcal R}(\Gamma_t)(\theta,\tau),$ 
	where ${\mathcal R}$ is given by \eqref{eq:R1},  so that  \eqref{eq:lyapunov_riccati} is the desired Riccati equation.  Finally, the uniform bounds obtained in Lemmas \ref{lemma:simple_int_uniform_bound} and \ref{L:uniform_convergence_U} and plugged into \eqref{eq:lyapunov_riccati} imply \eqref{eq:bounded_riccati}.

 \end{proof}
  
 \subsection{Step 4: Continuity and uniqueness}

    We now establish the estimate \eqref{eq:estimateRiccati} for the solutions of the Riccati equation, which in turn implies continuity.

\begin{lemma}  \label{L:Riccatiestimate}
Assume that  there exists a $L^1(\mu\otimes \mu)$-valued function $t\mapsto \Gamma_{t}$ such that  \eqref{eq:Riccati_monotone_kernel_mild} holds with \eqref{eq:bounded_riccati}.
Then, the estimate \eqref{eq:estimateRiccati} holds and  $\Gamma \in C([0,T], L^1(\mu \otimes \mu))$. If in addition $Q\in \S^d_+$, then  $\Gamma_t \in \S^d_+(\mu\otimes \mu)$, for all $t\leq T$.
\end{lemma}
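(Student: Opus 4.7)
My plan addresses the three conclusions in sequence: the pointwise estimate \eqref{eq:estimateRiccati}, continuity of $t\mapsto \Gamma_t$ in $L^1(\mu\otimes\mu)$, and the positivity statement under $Q\in\S^d_+$. The starting observation is that the solution $\Gamma$ furnished by Lemma~\ref{prop:existence_riccati} is an $L^1(\mu\otimes\mu)$-limit of the approximants $\Gamma^i$ for which $\Uc^i\in\S^d_+$ (Lemma~\ref{lemma:simple_convergence_lyapu}-\ref{L:symlyapii}); passing to the limit via \eqref{eq:Ui} already yields $\Uc(\Gamma_t):=\int_{\R_+^2}\mu(d\theta)^\top\Gamma_t(\theta,\tau)\mu(d\tau)\in\S^d_+$ and hence $\hat N(\Gamma_t)=N+F^\top \Uc(\Gamma_t) F\succeq \lambda I_m$, so $\|\hat N(\Gamma_t)^{-1}\|\leq \lambda^{-1}$ uniformly in $t\leq T$. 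This boundedness is the lever for what follows.

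For the estimate, I would set $g_t(\theta):=\int_{\R_+}|\mu|(d\tau)|\Gamma_t(\theta,\tau)|$ and integrate the mild form \eqref{eq:Riccati_monotone_kernel_mild} against $|\mu|(d\tau)$, using $e^{-\theta(s-t)}\leq 1$ to obtain a convolution inequality in the spirit of Lemma~\ref{L:lyapunovestimate} and Lemma~\ref{lemma:simple_int_uniform_bound}. Each linear term of $\mathcal R(\Gamma_s)$ is bounded by a constant combination of $1$, $\bar K(s-t)$, $g_s(\theta)$ and $\bar K(s-t)g_s(\theta)$, using \eqref{eq:bounded_riccati} to control the double integral $|\Uc(\Gamma_s)|$, together with the ``one-sided'' estimate $\int_{\R_+}|\mu|(d\tau)\,|\!\int\mu(d\theta')^\top\Gamma_s(\theta',\tau)|\leq \|\Gamma_s\|_{L^1(\mu\otimes\mu)}$. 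The quadratic term contributes at most $|S(\Gamma_s)(\theta)|\,\|\hat N(\Gamma_s)^{-1}\|\int|\mu|(d\tau)|S(\Gamma_s)(\tau)|$, where $|S(\Gamma_s)(\theta)|\leq c(1+g_s(\theta))$ and $\int|\mu|(d\tau)|S(\Gamma_s)(\tau)|$ is uniformly bounded thanks to \eqref{eq:bounded_riccati}. Assembling these bounds yields an inequality of the form $g_t(\theta)\leq C+C\int_t^T(1+\bar K(s-t))\,g_s(\theta)\,ds$, $\mu$-a.e., and the generalized Gronwall inequality for convolution equations \cite[Theorem 9.8.2]{GLS:90} delivers $g_t(\theta)\leq M$.

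For continuity, once \eqref{eq:estimateRiccati} is in hand, combined with the uniform bounds above, $|\mathcal R(\Gamma_s)(\theta,\tau)|$ admits a uniform $dt\otimes\mu\otimes\mu$-a.e.\ bound. I then replicate verbatim the argument of Lemma~\ref{l:lyapunov_continuity}: for $s\leq t$, decompose $\Gamma_s-\Gamma_t$ via the mild form into an integral over $[s,t]$ plus a ``phase-difference'' integral over $[t,T]$, and apply Cauchy--Schwarz together with the $L^2$-continuity of $\bar K$ from \eqref{L2cont} to deduce $\|\Gamma_s-\Gamma_t\|_{L^1(\mu\otimes\mu)}\to 0$ as $s\to t$.

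Finally, for positivity under $Q\in\S^d_+$, I introduce the feedback $\Theta_t(\tau):=-\hat N(\Gamma_t)^{-1}S(\Gamma_t)(\tau)$ and observe by a direct algebraic manipulation that $\mathcal R(\Gamma_t)$ coincides with the Lyapunov nonlinearity \eqref{def:infinite_dim_lyapunov2} associated to $\tilde Q=Q+\Theta^\top N\Theta$, $\tilde B=B+C\Theta$, $\tilde D=D+F\Theta$; consequently $\Gamma$ solves the Lyapunov equation \eqref{def:infinite_dim_lyapunov1} with these coefficients. The uniform bounds on $\hat N^{-1}$, $S$ and $\Gamma$ imply a uniform bound on $\Theta$, so that \eqref{assumption:lyapunov_ODE_coeff} holds; since $\tilde Q_t\in\S^d_+(\mu\otimes\mu)$ (the cross term reduces to $\psi^\top N\psi\geq 0$ with $\psi=\int\Theta_t(\tau)\mu(d\tau)\varphi(\tau)$), Lemma~\ref{lemma:positive_lyapunov}-\ref{L:symlyapi} applies and gives $\Gamma_t\in\S^d_+(\mu\otimes\mu)$ for all $t\leq T$. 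The main obstacle is the apparent circularity between boundedness of $\hat N^{-1}$ (needed for the estimate and the Lyapunov rewriting) and positivity of $\Gamma$; it is broken by extracting the positivity of the integrated quantity $\Uc(\Gamma_t)$ directly from the $L^1$-convergence $\Gamma^i\to\Gamma$, which is available before any of the subsequent steps.
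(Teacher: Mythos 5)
You take essentially the same route as the paper on all three assertions: the estimate \eqref{eq:estimateRiccati} via a convolution inequality for $g_t(\theta)=\int_{\R_+}|\mu|(d\tau)|\Gamma_t(\theta,\tau)|$ closed by the generalized Gronwall inequality of \cite[Theorem 9.8.2]{GLS:90}; continuity by the argument of Lemma~\ref{l:lyapunov_continuity}, which only uses an a.e.\ bound on the integrand (the paper phrases this as recasting \eqref{eq:Riccati_monotone_kernel_mild} as the Lyapunov equation \eqref{eq:lyapunov_riccati} with bounded coefficients, which is the same thing); and positivity by rewriting $\mathcal{R}(\Gamma_t)$ with the feedback $\Theta_t=-\hat N(\Gamma_t)^{-1}S(\Gamma_t)$ and applying Lemma~\ref{lemma:positive_lyapunov} together with uniqueness from Theorem~\ref{thm:existence_unicite_lyapunov}, exactly as the paper does. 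Two caveats. (i) Your claim that $\int_{\R_+}|\mu|(d\tau)\,|S(\Gamma_s)(\tau)|$ is uniformly bounded is false when $|\mu|(\R_+)=\infty$ (e.g.\ the fractional case), because the component $F^\top \Uc(\Gamma_s) D$ of $S(\Gamma_s)(\tau)$ does not depend on $\tau$; you must retain the factor $e^{-\tau(s-t)}$ in that integral, which turns the bound into one of order $1+\bar K(s-t)$ --- consistent with the convolution inequality you then write down, so the fix is cosmetic. (ii) Your uniform bound on $\hat N(\Gamma_t)^{-1}$ is extracted from the approximating sequence of Lemma~\ref{prop:existence_riccati}, so your argument as written only covers the constructed solution, whereas the lemma is stated for an arbitrary $L^1(\mu\otimes\mu)$-valued solution satisfying \eqref{eq:Riccati_monotone_kernel_mild} and \eqref{eq:bounded_riccati}, and it is needed in that generality when the uniqueness class of Theorem~\ref{T:Riccatiexistence} is considered (a solution not known beforehand to be the constructed one). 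You have in fact identified a point the paper treats loosely: its proof simply bounds $\hat N^{-1}(\Gamma_s)$ by $|N^{-1}|$, which also presupposes $\Uc(\Gamma_s)\succeq 0$. To keep the stated scope, derive the bound from the hypotheses at hand instead of from the construction; for instance, for the $\S^d_+(\mu\otimes\mu)$-valued solutions relevant to the theorem, $\Uc(\Gamma_t)\succeq 0$ follows from the nonnegativity of $\Gamma_t$ by the truncation argument used in the proof of Lemma~\ref{lemma:positive_lyapunov}, giving $\hat N(\Gamma_t)\succeq N\succeq\lambda I_m$ and hence the same uniform bound.
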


\begin{proof}
 The proof of the estimate  follows the same lines as that of Lemma \ref{L:lyapunovestimate}, with constant coefficients.  The only difference  is the nonlinear term 
	$$ \textbf{V}_t(\theta,\tau)=\int_t^T e^{-(\theta + \tau)(s-t)}   S(\Gamma_s)(\theta)^\top \hat{N}^{-1}(\Gamma_s)  S(\Gamma_s)(\tau) ds, $$
	 which we can bound as follows.   Let $\hat{S}(\Gamma)(s)(\theta) = |F||D| \|\Gamma_s\|_{L^1(\mu \otimes \mu)} +|C| \ints |\mu|(d \tau ')|\Gamma(\theta, \tau ')|$.  Integration over the $\tau$-variable, 	using the  bound $e^{-\theta(s-t)} \leq 1$ and  Tonelli's theorem give  for a constant   $c$  that may vary from line to line	 
	\begin{align*}
\int_{\R_+} |\mu|(d\tau)|\textbf{V}_t(\theta,\tau)|	&\leq  \int_{\R_+} |\mu|(d\tau) \int_t^T e^{-(\theta + \tau)(s-t)} \hat{S}(\Gamma)(s)(\theta) |N^{-1}|\hat{S}(\Gamma)(s)(\tau) ds \\
&\leq  c \sup_{s \leq T} \|\Gamma_s\|_{L^1(\mu \otimes \mu)}  \int_0^T (1+\bar K(s))  ds  \\
&\quad+ c \sup_{s \leq T} \|\Gamma_s\|_{L^1(\mu \otimes \mu)} \int_t^T (1 + \bar K(s-t)) \int_{\R_+} |\mu|(d\tau ') |\Gamma_s(\theta,\tau ')| ds,
	\end{align*}
	where $\bar K$ is defined  as in \eqref{eq:kbarproof}.	The first four terms appearing in 	$\int_{\R_+} |\mu|(d\tau) |\Gamma_t(\theta,\tau)|$ lead to inequality \eqref{eq:estimatetemplemma}, with $(\Gamma,\bar K,\mu,c)$ instead of $(\Psi,\bar K_2,\mu_2,\kappa)$. Adding the previous bound for the fifth nonlinear term yields
	\bes{
		\int_{\R_+} |\mu|(d\tau) |\Gamma_t(\theta,\tau)|  \leq & c \left(1 + \sup_{s \in [0,T]} \|\Gamma_s\|_{L^1(\mu_1 \otimes \mu)} \right) \int_0^T { \left(1+\bar{K}(s)\right)} ds\\
		&\quad + c \int_t^T (1+\bar {K}(s-t)) \int_{\R_+} |\mu|(d\tau ') |\Gamma_s(\theta,\tau ')| ds.
	}
The claimed estimate now follows from  the generalized Gronwall inequality for convolution equations, see \cite[Theorem 9.8.2]{GLS:90}. 

To argue continuity,  we recall that  the Riccati equation \eqref{eq:Riccati_monotone_kernel_mild} can be recast as a Lyapunov equation as in \eqref{eq:lyapunov_riccati}. The claimed continuity is  therefore a consequence of  Lemma \ref{l:lyapunov_continuity} provided that the coefficients of \eqref{eq:lyapunov_riccati} are bounded, which amounts to showing that the functions $ t \mapsto \int_{\R_+^2} \mu(d\theta)^\T \Gamma_t(\theta,\tau)\mu(d\tau)$
    and $ (t,\theta) \mapsto \int_{\R_+} \Gamma_t(\theta,\tau)\mu(d\tau)$ are bounded. The boundedness of the former is ensured by \eqref{eq:bounded_riccati} and that of the latter  follows from the estimate \eqref{eq:estimateRiccati}.  If in addition $Q\in \S^d_+$, then Lemma~\ref{lemma:positive_lyapunov} applied for \eqref{eq:lyapunov_riccati} yields that $\Gamma_t \in \S^d_+(\mu\otimes \mu)$ for any $t\leq T$. 
\end{proof}

Finally, exploiting once more the correspondence with the Lyapunov equation, uniqueness for the Riccati equation is obtained as a consequence of Theorem~\ref{thm:existence_unicite_lyapunov} and Lemma~\ref{lemma:positive_lyapunov}.

\begin{lemma}	\label{l:unicity_riccati}
There exists  at most one  solution to \eqref{eq:Riccati_monotone_kernel_mild}  such that \eqref{eq:estimateRiccati} and \eqref{eq:bounded_riccati} hold.
\end{lemma}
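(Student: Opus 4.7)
The plan is to imitate the classical LQ uniqueness argument \cite[Theorem 6.7.2]{yong1999stochastic} by using once more the correspondence between the Riccati equation \eqref{eq:Riccati_monotone_kernel_mild} and a linear Lyapunov equation of the form \eqref{def:infinite_dim_lyapunov1}--\eqref{def:infinite_dim_lyapunov2}. Let $\Gamma^1, \Gamma^2 \in C([0,T], L^1(\mu\otimes\mu))$ be two solutions satisfying \eqref{eq:estimateRiccati} and \eqref{eq:bounded_riccati}. Under the standing assumption \eqref{assumption:QN}, Lemma~\ref{L:Riccatiestimate} ensures that $\Gamma^i_t \in \S^d_+(\mu\otimes\mu)$, so that $\hat N(\Gamma^i_t) \succeq N \succeq \lambda I_m$; combined with \eqref{eq:estimateRiccati}--\eqref{eq:bounded_riccati}, the feedback $\Theta^i$ defined by \eqref{deftheta} is well-defined and bounded $dt\otimes\mu$-a.e. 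The ``completed square'' identity then rewrites each Riccati equation as the Lyapunov equation \eqref{def:infinite_dim_lyapunov1}--\eqref{def:infinite_dim_lyapunov2} with coefficients \eqref{coeff_recu_lyapu}.

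Setting $\Delta = \Gamma^1 - \Gamma^2$, a direct algebraic computation analogous to Lemma~\ref{l:eq_gamma_i_recu} (and to the derivation of \eqref{eq:Delta_ijCauchy}) shows that $\Delta$ solves a Lyapunov equation of the form \eqref{def:infinite_dim_lyapunov1} with coefficients $\tilde B = B + C\Theta^2$, $\tilde D = D + F\Theta^2$, and source
\[
\tilde Q^\delta_s(\theta,\tau) \;=\; -\rho_s(\theta)^\T\,\hat N(\Gamma^1_s)\,\rho_s(\tau), \qquad \rho = \Theta^2 - \Theta^1.
\]
Because $\hat N(\Gamma^1_s) \succeq \lambda I_m$, the source $-\tilde Q^\delta_s$ is $\S^d_+(\mu\otimes\mu)$-valued, so Lemma~\ref{lemma:positive_lyapunov}-\ref{L:symlyapi} applied to $-\Delta$ yields $-\Delta_t \in \S^d_+(\mu\otimes\mu)$, i.e.\ $\Gamma^1 \preceq_\mu \Gamma^2$. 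Interchanging the roles of $\Gamma^1$ and $\Gamma^2$ gives the reverse inequality, hence
\[
\langle \varphi, \boldsymbol{\Delta}_t \varphi \rangle_\mu \;=\; 0, \qquad \varphi \in L^1(\mu),\;\; t \in [0,T].
\]

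To conclude $\Delta \equiv 0$, I would use this vanishing bilinear form to kill the source of the Lyapunov equation for $\Delta$. Testing with $\varphi_n = z\,\ind_{[1/n,n]}$, $z \in \R^{d'}$, and passing to the limit by dominated convergence (using $\Delta_t \in L^1(\mu\otimes\mu)$) gives $\int_{\R_+^2}\mu(d\theta)^\T\Delta_t(\theta,\tau)\mu(d\tau) = 0$, so $\hat N(\Gamma^1_t) = \hat N(\Gamma^2_t)$. Polarizing the vanishing quadratic form using the symmetry of $\Delta_t$ and testing similarly with one argument concentrated on a slice in $\tau$ then forces the remaining ingredient $\int\mu(d\theta')^\T\Delta_t(\theta',\tau) = 0$ in the pairing against $L^1(\mu)$-functions, which in view of \eqref{deftheta} is precisely what is needed to make $\rho$ vanish in the relevant dual sense. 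Then $\Delta$ satisfies \eqref{def:infinite_dim_lyapunov1}--\eqref{def:infinite_dim_lyapunov2} with zero source, and the uniqueness part of Theorem~\ref{thm:existence_unicite_lyapunov} delivers $\Delta \equiv 0$ in $C([0,T],L^1(\mu\otimes\mu))$.

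The main obstacle I anticipate is this last polarization/limiting step: extracting an almost-everywhere equality of the feedbacks from the scalar vanishing of $\langle\varphi,\boldsymbol{\Delta}_t\varphi\rangle_\mu$ on the non-reflexive space $L^1(\mu)$ requires careful bookkeeping with the matrix-measure structure of $\mu$. Should it turn out to be unwieldy, a cleaner alternative is to bypass polarization entirely and follow the Gronwall route of Lemma~\ref{L:cauchy}: rewrite $\rho_s$ in terms of the integrals of $\Delta$, and chain two applications of the generalized Gronwall inequality \cite[Theorem 9.8.2]{GLS:90} to bound successively $\int|\mu|(d\tau)|\Delta_t(\theta,\tau)|$ and $\|\Delta_t\|_{L^1(\mu\otimes\mu)}$ by themselves, now without the $\epsilon$-slack used in the Cauchy proof since there is no approximation error to absorb.
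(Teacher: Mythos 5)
Your ``cleaner alternative'' is exactly the paper's proof. There, $\Delta=\Gamma^a-\Gamma^b$ is written (via the computation of Lemma~\ref{l:eq_gamma_i_recu}) as the solution of a Lyapunov-type equation whose coefficients are bounded thanks to \eqref{eq:estimateRiccati}--\eqref{eq:bounded_riccati}, the difference of feedbacks $\rho=\Theta^{a}-\Theta^{b}$ is shown to be \emph{linear} in the integrals of $\Delta$ through the identity $A^{-1}-B^{-1}=B^{-1}(B-A)A^{-1}$, and since the resulting equation is homogeneous (no affine term) the generalized Gronwall inequality \cite[Theorem 9.8.2]{GLS:90} gives $\|\Delta_t\|_{L^1(\mu\otimes\mu)}=0$ for all $t$. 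So if you carry out that fallback you recover the paper's argument; note also that this route, unlike your primary one, uses neither \eqref{assumption:QN} nor any positivity of the solutions, which is why Lemma~\ref{l:unicity_riccati} can be stated without sign assumptions.

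Your primary comparison-plus-polarization route, on the other hand, has a genuine gap at precisely the step you flag, and in the general matrix-measure setting it cannot be closed as described. Granting the comparison step (which already needs $Q\in\S^d_+$, $N\succeq\lambda I_m$ and Lemmas~\ref{L:Riccatiestimate}, \ref{lemma:positive_lyapunov}), what you obtain is $\langle\varphi,\boldsymbol{\Delta}_t\psi\rangle_{\mu}=0$ for all $\varphi,\psi\in L^1(\mu)$. Writing $\mu(d\theta)=h(\theta)\,|\mu|(d\theta)$ with a $d\times d'$-matrix density $h$, this is equivalent to $h(\theta)^\T\Delta_t(\theta,\tau)h(\tau)=0$, $|\mu|\otimes|\mu|$-a.e. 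Since $h(\theta)$ is in general rank deficient (always when $d'<d$, and possibly on large sets even when $d'\geq d$), this neither yields $\Delta_t=0$ $\mu\otimes\mu$-a.e.\ (which is what uniqueness in $L^1(\mu\otimes\mu)$ means) nor the $\mu$-a.e.\ vanishing of the single projection $\int_{\R_+}\mu(d\theta')^\T\Delta_t(\theta',\tau)$: it only gives $\bigl(\int_{\R_+}\mu(d\theta')^\T\Delta_t(\theta',\tau)\bigr)h(\tau)=0$. But it is exactly the pointwise vanishing of that single projection which, in view of \eqref{deftheta}, is needed to make $\rho$ (hence the source of the Lyapunov equation for $\Delta$) vanish and to invoke the uniqueness part of Theorem~\ref{thm:existence_unicite_lyapunov}. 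In the scalar case $d=d'=1$ the polarization argument does close (there $|h|=1$ a.e.), but not in general; so the Gronwall route is not merely ``cleaner'' -- it is the one that actually proves the lemma.
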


\begin{proof}
Let $\Gamma^a$ and $\Gamma^b$ be two solutions of \eqref{eq:Riccati_monotone_kernel_mild} such that \eqref{eq:estimateRiccati} and \eqref{eq:bounded_riccati} hold. 
For $i \in \{a,b\}$,  observe that $\Gamma^i$ can be recast as a solution to a Lyapunov equation  with bounded coefficients in the form \eqref{eq:recu_gamma_i}. 
As a result, $\Delta= \Gamma^a - \Gamma^b$ can be written as a solution to the following Lyapunov equation with bounded coefficients (see Lemma \ref{l:eq_gamma_i_recu} for details):
   \bec{
	\Delta^{}_t  (\theta, \tau) &= \int_t^T e^{-(\theta+\tau)(s-t)} F^{}_{ab}(s,\Delta^{}_s)  (\theta, \tau)ds,
	\\
	F^{}_{ab}(t,\Delta)  (\theta, \tau)&=Q^{ab}_t{   (\theta, \tau)}+ D_t^{b} (\theta)^\T \int_{\R_+^2} { \mu (d\theta')^\T \Delta(\theta',\tau') \mu(d\tau')} D_t^{b}(\tau) \\
	&\quad  + B_t^{b} (\theta)^\top \int_{\R_+} \mu (d\theta')^\T \Delta(\theta',\tau)  + \int_{\R_+} \Delta(\theta, \tau') \mu(d\tau') B_t^{2}(\tau) \\
	& \quad + S^{ab}_t (\theta)^\T \rho^{ab}_t(\tau) +\rho^{ab}_t (\theta)^\T  S^{ab}_t(\tau),    \label{eq:delta_ij_}
}
where
\begin{align*}  
    \rho^{ab}&=\Theta^{a} -\Theta^{b}, \\
    Q^{ab}_t   (\theta, \tau)&=\rho^{ab}_t (\theta)^\T \left(N + F^\T \int_{\R_+^2} \mu (d\theta')^\T   \Gamma^{a}_t(\theta',\tau')\mu(d\tau') F\right)\rho^{ab}_t(\tau),\\   
    S^{ab}_t(\tau) &= C^\T \int_{\R_+} \mu (d\theta')^\T \Gamma^a(\theta',\tau) + F^\T\int_{\R_+^2}\mu (d\theta') \Gamma^a_s(\theta',\tau') \mu(d\tau')D +\\
        & \;\;\;\;\;\;\;\;\;\; + \Big(N + F^\T \int_{\R_+^2} \mu (d\theta') \Gamma_s^a(\theta',\tau') \mu(d\tau') F \Big)\Theta^{a}_s(\tau). 
\end{align*}
The fact that the coefficients are bounded comes from  \eqref{eq:estimateRiccati} and \eqref{eq:bounded_riccati} on $\Gamma^a$ and $\Gamma^b$. Now, one can note 
similarly as in \eqref{eq:rho_ij} that $\rho^{ab}$ can be re-written as 
\bes{
    \rho^{ab}_s(\tau) =&-\left(N + F^\T \int_{\R_+^2} \mu(d\theta')^\T \Gamma^a_t(\theta',\tau ') \mu(d\tau ')   F\right)^{-1} \\
    &\quad\quad\quad\quad\quad\quad  \times \left(F^\T \int_{\R_+^2} \mu(d\theta')^\T \Delta_t(\theta',\tau ') \mu(d\tau ')  D +C^\T  \int_{\R_+} \mu(d\theta')^\T \Delta_t(\theta',\tau)  \right ) \\
    & + \left(  \left(N + F^\T \int_{\R_+^2} \mu(d\theta')^\T \Gamma^b_t(\theta',\tau ') \mu(d\tau ')   F\right)^{-1} - \left(N + F^\T \int_{\R_+^2} \mu(d\theta')^\T \Gamma^a_t(\theta',\tau ') \mu(d\tau ')   F\right)^{-1} \right) \\
    & \quad\quad\quad\quad\quad\quad  \times \left( F^\T \int_{\R_+^2} \mu(d\theta')^\T \Gamma^b_t(\theta',\tau ') \mu(d\tau ')  D +C^\T \int_{\R_+} \mu(d\theta')^\T \Gamma^b_t(\theta',\tau)  \right) \\
    =& \; \textbf{A}(\tau) + \textbf{B}(\tau), 
}
{which is linear in  $\Delta$ since  $\textbf{B}(\tau)$ can be rewritten as}   
\bes{
	& \left|  \left(N + F^\T \int_{\R_+^2} \mu(d\theta')^\T \Gamma^b_t(\theta',\tau ') \mu(d\tau ')   F\right)^{-1} - \left(N + F^\T \int_{\R_+^2} \mu(d\theta')^\T \Gamma^a_t(\theta',\tau ') \mu(d\tau ')   	F\right)^{-1} \right|  \\
	&= \Bigg | \left(N + F^\T \int_{\R_+^2} \mu(d\theta')^\T \Gamma^b_t(\theta',\tau ') \mu(d\tau ')   F\right)^{-1}    \left(F^\T \int_{\R_+^2} \mu(d\theta')^\T \left(\Delta_t(\theta',\tau ') \right)  \mu(d\tau ') F  \right)  \\
	 & \qquad \times  \left(N + F^\T \int_{\R_+^2} \mu(d\theta')^\T \Gamma^a_t(\theta',\tau ') \mu(d\tau ')   F\right)^{-1} \Bigg |. 
}
Consequently, $\Delta = \Gamma^a - \Gamma^b $ is solution to a homogeneous linear Lyapunov equation with bounded coefficients, and no affine term. Thus, the generalized Gronwall inequality for convolution equations, see \cite[Theorem 9.8.2]{GLS:90} ensures that $\| \Delta_t \|_{L^1(\mu \otimes \mu)}=0$ for every $t\in [0,T]$, which proves uniqueness. 
\end{proof}


\vspace{1mm}

\appendix

\section{Some elementary results}


\begin{lemma}
\label{l:CS}
Let $\Psi \in \S^{d}_+(\mu \otimes \mu)$, and $\boldsymbol{\Psi}$ its corresponding linear integral operator. Then for any $\varphi,\psi \in L^1(\mu)$
\begin{align*}
\langle \varphi,\boldsymbol{\Psi} \psi\rangle^2_{\mu} & \leq \; \langle \varphi,\boldsymbol{\Psi} \varphi \rangle_{\mu}. \langle \psi,\boldsymbol{\Psi} \psi\rangle_{\mu} 
\end{align*}
\end{lemma}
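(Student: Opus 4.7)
The plan is to run the standard Cauchy--Schwarz argument for a symmetric positive semidefinite bilinear form, applied to the real-valued form $(\varphi, \psi) \mapsto \langle \varphi, \boldsymbol{\Psi} \psi \rangle_{\mu}$ on $L^1(\mu) \times L^1(\mu)$. For this pairing to make sense I first note that, since $\Psi \in \S_+^d(\mu\otimes\mu) \subset L^\infty(\mu\otimes\mu)$, the discussion after \eqref{defintegral} gives $\boldsymbol{\Psi}\psi \in L^\infty(\mu^\top)$ for every $\psi \in L^1(\mu)$, so $\langle \varphi, \boldsymbol{\Psi}\psi\rangle_\mu \in \R$. Furthermore, the symmetry $\Psi(\theta,\tau) = \Psi(\tau,\theta)^\top$ translates, via Fubini's theorem on $\R_+^2$, into the identity $\langle \varphi, \boldsymbol{\Psi}\psi\rangle_\mu = \langle \psi, \boldsymbol{\Psi}\varphi\rangle_\mu$ for all $\varphi,\psi \in L^1(\mu)$, which is the key algebraic ingredient.

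I would then fix $\varphi,\psi \in L^1(\mu)$ and consider the real-valued function
\begin{equation*}
f(t) \;=\; \langle \varphi + t\psi,\, \boldsymbol{\Psi}(\varphi + t\psi)\rangle_\mu, \qquad t \in \R.
\end{equation*}
Since $\varphi + t\psi \in L^1(\mu)$ and $\Psi \in \S_+^d(\mu\otimes\mu)$, the nonnegativity property in Definition~\ref{D:nonnegative} gives $f(t) \geq 0$ for every $t$. Expanding using the bilinearity of the pairing (which in turn follows from the linearity of $\boldsymbol{\Psi}$ as an integral operator) and the symmetry noted above yields
\begin{equation*}
f(t) \;=\; a + 2bt + ct^2, \qquad a := \langle \varphi, \boldsymbol{\Psi}\varphi\rangle_\mu,\; b := \langle \varphi, \boldsymbol{\Psi}\psi\rangle_\mu,\; c := \langle \psi, \boldsymbol{\Psi}\psi\rangle_\mu,
\end{equation*}
with $a, c \geq 0$ by nonnegativity.

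To finish, I would argue that the quadratic $a + 2bt + ct^2 \geq 0$ for all $t \in \R$ forces $b^2 \leq ac$, which is exactly the claim. If $c > 0$, this is the usual discriminant test. If $c = 0$, the inequality $a + 2bt \geq 0$ for every $t \in \R$ forces $b = 0$, and the conclusion $b^2 = 0 \leq a \cdot 0 = ac$ holds trivially. No step here is technically delicate; the only point worth checking carefully is the symmetry identity $\langle \varphi, \boldsymbol{\Psi}\psi\rangle_\mu = \langle \psi, \boldsymbol{\Psi}\varphi\rangle_\mu$, for which one must justify the application of Fubini to the iterated integral defining $\langle \varphi, \boldsymbol{\Psi}\psi\rangle_\mu$ --- but this is immediate from $\varphi, \psi \in L^1(\mu)$ and $\Psi \in L^\infty(\mu\otimes\mu)$.
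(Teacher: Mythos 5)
Your argument is correct and is essentially the paper's own proof: both consider the nonnegative quadratic $\lambda \mapsto \langle \varphi + \lambda\psi, \boldsymbol{\Psi}(\varphi+\lambda\psi)\rangle_\mu$, expand using the symmetry $\Psi(\theta,\tau)=\Psi(\tau,\theta)^\top$, and conclude from the discriminant. Your extra care with well-definedness and the degenerate case $c=0$ is a slight refinement of the paper's argument, not a different route.
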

\begin{proof}
	Since $\Psi \in \S^{d}_+(\mu \otimes \mu)$, then for any $\varphi, \psi\in  L^1(\mu)$ and $\lambda \in \R$ we have 
	\bes{
		 \int_{\R_+^2} \left( \varphi(\theta) + \lambda \psi(\theta)\right)^\T \mu(d \theta)^\T \Psi(\theta, \tau) \mu(d\tau) \left( \varphi(\tau) + \lambda \psi(\tau) \right) \geq 0.
	}
	By expanding the square we obtain a non negative second order polynomial in $\lambda$ whose discriminant must be non positive. This combined with $\Psi(\theta, \tau) = \Psi(\tau, \theta)^\T $ yields the claimed inequality. 
\end{proof}

\begin{lemma}
\label{l:eq_gamma_i_recu}
Let $(\Gamma^i)_{i\geq 0}$ be the sequence defined in \eqref{eq:recu_gamma_i}. Then for any $1 \leq i < j$, $\Delta^{ij} = \Gamma^i - \Gamma^j$ is solution to
   \bec{
	\Delta^{ij}_t  (\theta, \tau) &= \int_t^T e^{-(\theta+\tau)(s-t)} F^{\delta}_{ij}(s,\Delta^{ij}_s)  (\theta, \tau)ds,
	\\
	F^{\delta}_{ij}(t,\Delta)  (\theta, \tau)&=Q^{ij,\delta}_t{   (\theta, \tau)}+ D_t^{j-1} (\theta)^\T \int_{\R_+^2} { \mu (d\theta')^\T \Delta(\theta',\tau') \mu(d\tau')} D_t^{j-1}(\tau) \\
	&\quad  + B_t^{j-1} (\theta)^\top \int_{\R_+} \mu (d\theta')^\T \Delta(\theta',\tau)  + \int_{\R_+} \Delta(\theta, \tau') \mu(d\tau') B_t^{j-1}(\tau) \\
	& \quad + S^{ij}_t (\theta)^\T \rho^{ij}_t(\tau) +\rho^{ij}_t (\theta)^\T  S^{ij}_t(\tau),    \label{eq:delta_ij_}
}
where
\begin{align*}  
    \rho^{ij}&=\Theta^{i-1} -\Theta^{j-1}, \\
    Q^{ij,\delta}_t   (\theta, \tau)&=\rho^{ij}_t (\theta)^\T \left(N + F^\T \int_{\R_+^2} \mu (d\theta')^\T   \Gamma^{i}_t(\theta',\tau')\mu(d\tau') F\right)\rho^{ij}_t(\tau),\\   
    S^{ij}_t(\tau) &= C^\T \int_{\R_+} \mu (d\theta')^\T \Gamma^i(\theta',\tau) + F^\T\int_{\R_+^2}\mu (d\theta') \Gamma^i_s(\theta',\tau') \mu(d\tau')D +\\
        & \;\;\;\;\;\;\;\;\;\; + \Big(N + F^\T \int_{\R_+^2} \mu (d\theta')^\T \Gamma_s^i(\theta',\tau') \mu(d\tau') F \Big)\Theta^{j-1}_s(\tau). 
\end{align*}
\end{lemma}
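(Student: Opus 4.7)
The strategy is to start from equation \eqref{eq:recu_gamma_i} for both $\Gamma^i$ and $\Gamma^j$, subtract, and then carefully re-organise the right-hand side using the identity $\Theta^{i-1}=\Theta^{j-1}+\rho^{ij}$ together with the relations $\tilde B^{i-1}-\tilde B^{j-1}=C\rho^{ij}$ and $\tilde D^{i-1}-\tilde D^{j-1}=F\rho^{ij}$ coming from \eqref{coeff_recu_lyapu}. The backbone of the decomposition is the splitting
\begin{align*}
F_{i-1}(s,\Gamma^i_s)-F_{j-1}(s,\Gamma^j_s) \;=\;  \bigl[F_{j-1}(s,\Gamma^i_s)-F_{j-1}(s,\Gamma^j_s)\bigr]+\bigl[F_{i-1}(s,\Gamma^i_s)-F_{j-1}(s,\Gamma^i_s)\bigr].
\end{align*}
Because $F_{j-1}(s,\cdot)$ is affine in $\Gamma$ with source term $\tilde Q^{j-1}_s$, the first bracket produces exactly the linear-in-$\Delta^{ij}$ terms with coefficients $\tilde B^{j-1}_s,\tilde D^{j-1}_s$ that appear in \eqref{eq:delta_ij_}.

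The core of the computation lies in the second bracket, which depends on $\Gamma^i_s$ (not on $\Gamma^j_s$). I will expand each of its three contributions using the algebraic identity
$a^\T M a'-b^\T M b'=(a-b)^\T M (a-b)+(a-b)^\T M b'+b^\T M (a'-b')$
(applied once with $a=\Theta^{i-1}(\theta)$, $b=\Theta^{j-1}(\theta)$, $M=N$ for the $\tilde Q$ difference, and once with $a=\tilde D^{i-1}_s(\theta)$, $b=\tilde D^{j-1}_s(\theta)$, $M=\mathcal U^i_s$ for the $\tilde D$ difference), while the $\tilde B$ difference is simply linear in $\rho^{ij}$. Collecting:
\begin{itemize}
\item the pure quadratic pieces $\rho^{ij}(\theta)^\T N\rho^{ij}(\tau)$ from $\tilde Q$ and $\rho^{ij}(\theta)^\T F^\T \mathcal U^i_s F\rho^{ij}(\tau)$ from $\tilde D$ combine to give $Q^{ij,\delta}_s$;
\item the mixed pieces of the form $\rho^{ij}(\theta)^\T[\,\cdot\,]+[\,\cdot\,]^\T\rho^{ij}(\tau)$ gather into $\rho^{ij}(\theta)^\T S^{ij}_s(\tau)+S^{ij}_s(\theta)^\T \rho^{ij}(\tau)$, with the three contributions to $S^{ij}_s(\tau)$, namely $C^\T\int\mu(d\theta')^\T\Gamma^i_s(\theta',\tau)$, $F^\T\mathcal U^i_s D$ and $(N+F^\T\mathcal U^i_s F)\Theta^{j-1}_s(\tau)$, coming respectively from the $\tilde B$-difference, the ``$b'=D$'' part of the $\tilde D$-expansion, and the combination of the ``$b'=F\Theta^{j-1}$'' part of the $\tilde D$-expansion with the cross term from $\tilde Q$.
\end{itemize}

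Once every term has been identified, substituting back into the difference of the two integral equations yields \eqref{eq:delta_ij_}. The main obstacle is purely bookkeeping: it has to be verified that the inner matrix of the quadratic form is precisely $N+F^\T\mathcal U^i_s F$ (rather than some mixture of $\Gamma^i$ and $\Gamma^j$ contributions) and that the $\Theta^{j-1}_s(\tau)$ occurring in $S^{ij}_s(\tau)$ is indeed sandwiched with the same $N+F^\T\mathcal U^i_s F$ factor. This is guaranteed by the choice of the splitting above: because $F_{i-1}(s,\Gamma^i_s)-F_{j-1}(s,\Gamma^i_s)$ evaluates both $F_{i-1}$ and $F_{j-1}$ at the same argument $\Gamma^i_s$, the only matrix appearing in the sandwich is $\mathcal U^i_s$, while the symmetric expansion around $\Theta^{j-1}$ isolates $\Theta^{j-1}$ in the cross terms. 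No analytic input is needed beyond the well-posedness of each $\Gamma^{i}$, already provided by Theorem~\ref{thm:existence_unicite_lyapunov}.
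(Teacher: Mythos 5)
Your proposal is correct and takes essentially the same route as the paper: the paper subtracts the two Lyapunov equations and expands each of the three terms by exactly your add-and-subtract device (splitting around the $j-1$ coefficients and the common argument $\Gamma^i$), then collects the pure quadratic piece into $Q^{ij,\delta}$ and the mixed pieces into $S^{ij}_t(\theta)^\T\rho^{ij}_t(\tau)+\rho^{ij}_t(\theta)^\T S^{ij}_t(\tau)$. The only detail worth making explicit is that gathering the mixed terms into this symmetric form uses the kernel symmetry $\Gamma^i_t(\theta,\tau)=\Gamma^i_t(\tau,\theta)^\T$ (hence symmetry of $\int_{\R_+^2}\mu(d\theta)^\T\Gamma^i_t(\theta,\tau)\mu(d\tau)$) available from Lemma~\ref{lemma:positive_lyapunov}, a point the paper also leaves tacit.
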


\begin{remark}\label{R:deltai}
    Note that when $j = i+1$, then $S^{i(i+1)}=0$. Indeed, in such case we have  
    \bes{
        S^{i(i+1)}_s (\theta) = &C^\T  \int_{\R_+} \mu (d\theta') \Gamma^{i}_s(\theta',\tau) + F^\T \int_{\R_+^2} \mu (d\theta')^\T \Gamma^i_s(\theta',\tau') D^{j-1}_s(\tau) + N \Theta^{i}_s(\tau)  \\
         =& C^\T  \int_{\R_+} \mu (d\theta') \Gamma^{i}_s(\theta',\tau) + F^\T \int_{\R_+^2} \mu (d\theta')^\T \Gamma^i_s(\theta',\tau') D \\
        &+ \left(N + F^\T \int_{\R_+^2} \mu (d\theta')^\T \Gamma^i_s(\theta',\tau') \mu(d\tau') F \right) \Theta^{i}_s(\tau)    \;    = \;  0.
    }
 As a consequence, in the particular case where $j=i+1$, $\Delta^i = \Delta^{i(i+1)}$ is solution to \eqref{eq:Deltai}. \qed
\end{remark}

\begin{proof}
Let $t\in [0,T]$, for almost every  $\theta,\tau$ we have
    \bes{
        \label{eq:proof_delta_ij}
        \Delta^{ij}_t  (\theta, \tau) =& \Gamma^i_t  (\theta, \tau) - \Gamma^j_t  (\theta, \tau) \\
        =& \int_t^T  e^{-(\theta+\tau)(s-t)} \left(\textbf{I}^{ij,\delta}_s  (\theta, \tau) + \left(\textbf{I}^{ij,\delta}_s(\tau,\theta)\right)^\T + \textbf{II}^{ij,\delta}_s  (\theta, \tau)+ \textbf{III}^{ij,\delta}_s  (\theta, \tau) \right)ds ,
    }
where $\textbf{I}^{ij,\delta}, \textbf{II}^{ij,\delta}$  and $\textbf{III}^{ij,\delta}$ are defined as follows
\bes{
        { \textbf{I}^{ij,\delta}_s  (\theta, \tau) } =& \int_{\R_+} \Gamma^i_s(\theta, \tau') \mu(d\tau') B^{i-1}_s(\tau) - \int_{\R_+} \Gamma^j_s(\theta, \tau') \mu(d\tau') B^{j-1}_s(\tau)\\
        =& \int_{\R_+} \Delta^{ij}_s(\theta, \tau')\mu(d\tau')B^{j-1}_s(\tau) + \int_{\R_+} \Gamma^{i}_s(\theta, \tau')\mu(d\tau')(B^{i-1}_s(\tau) - B^{j-1}_s(\tau))   \\ 
            =& \int_{\R_+} \Delta^{ij}_s(\theta, \tau')\mu(d\tau') B^{j-1}_s(\tau) + \int_{\R_+} \Gamma^{i}_s(\theta, \tau')\mu(d\tau')C \rho^{ij}_s(\tau) \\
        { \textbf{II}^{ij,\delta}_s  (\theta, \tau)} =& D^{i-1}_s (\theta)^\T \int_{\R_+^2} \mu (d\theta')^\T \Gamma^i_s  (\theta, \tau) \mu(d\tau')D^{i-1}_s(\tau) - D^{j-1}_s (\theta)^\T \int_{\R_+^2} \mu (d\theta')^\T \Gamma^j_s  (\theta, \tau) \mu(d\tau')D^{j-1}_s(\tau)  \\
        =& D^{j-1}_s (\theta)^\T \int_{\R_+^2} \mu (d\theta')^\T \Delta^{ij}_s  (\theta, \tau) \mu(d\tau')D^{j-1}_s(\tau) \\
        & + \rho^{ij}_s (\theta)^\T F^\T \int_{\R_+^2} \mu (d\theta')^\T \Gamma^i_s(\theta',\tau') \mu(d\tau') F \rho^{ij}_s(\tau) \\
        & +  \rho^{ij}_s (\theta)^\T F^\T \int_{\R_+^2} \mu (d\theta')^\T \Gamma^i_s(\theta',\tau') D^{j-1}_s(\tau) \\
        & + D^{j-1}_s (\theta)^\T \int_{\R_+^2} \mu (d\theta')^\T \Gamma^i_s(\theta',\tau') \mu(d\tau') F \rho^{ij}_s(\tau) \\
        { \textbf{III}^{ij,\delta}_s  (\theta, \tau)} =&\;   Q^{i-1}_s  (\theta, \tau) - Q^{j-1}_s  (\theta, \tau)\\
        =& \; \rho^{ij}_s (\theta)^\T N \rho^{ij}_s(\tau) + \rho^{ij}_s (\theta)^\T N \Theta^{j-1}_s(\tau) + \Theta^{j-1}_s (\theta) N \rho^{ij}_s(\tau)
    }
 By  plugging the expressions of $\textbf{I}^{ij,\delta}, \textbf{II}^{ij,\delta}, \textbf{III}^{ij,\delta}$ into \eqref{eq:proof_delta_ij}  we obtain    \eqref{eq:delta_ij_}. 
\end{proof}

\vspace{5mm}

\small



\bibliographystyle{abbrv}

\bibliography{bibl}

\typeout{get arXiv to do 4 passes: Label(s) may have changed. Rerun}

\end{document}